\documentclass[a4paper,11pt]{amsart}

\usepackage[utf8]{inputenc}		
\usepackage[english]{babel}

\usepackage{amsfonts}			
\usepackage{amsmath}
\usepackage{amssymb}
\usepackage{amsthm}
\usepackage{mathtools}

\usepackage{soul}

\usepackage{esvect}				

\usepackage{hyperref}			
	\hypersetup{colorlinks=false, urlcolor=black, linkcolor=black}


\newcommand{\N}{\mathbb{N}}
\newcommand{\bN}{\mathbb{N}}						
\newcommand{\Z}{\mathbb{Z}}						
\newcommand{\Q}{\mathbb{Q}}						
\newcommand{\R}{\mathbb{R}}						
\newcommand{\bR}{\mathbb{R}}	

\newcommand{\bS}{\mathbb{S}}					
\newcommand{\bT}{\mathbb{T}}


\newcommand{\Hausd}{\mathcal{H}}

\DeclareMathOperator{\lov}{lov}
\DeclareMathOperator{\lodn}{lodn}
\DeclareMathOperator{\Dens}{Dens}
\newcommand{\curr}{\mathcal{D}}
\newcommand{\ncurr}{\mathbf{N}}
\DeclareMathOperator{\mass}{\mathbf{M}}

\newcommand{\eps}{\varepsilon}					

\newcommand{\dd}								
	{\mathop{}\!\mathrm{d}}						
\newcommand{\ddn}[1]							
	{\mathop{}\!\mathrm{d^{#1}}}

\newcommand{\abs}[1]							

\newcommand{\smallabs}[1]						
	{\lvert #1 \rvert}	
\newcommand{\norm}[1]							
	{\left\lVert #1 \right\rVert}	
\newcommand{\smallnorm}[1]						
	{\lVert #1 \rVert}						
\newcommand{\ip}[2]								
	{\left< #1 , #2 \right>}


\DeclareMathOperator{\id}{id}					
\DeclareMathOperator{\proj}{pr}					
\DeclareMathOperator{\vol}{vol}					
\DeclareMathOperator{\spt}{spt}					
\DeclareMathOperator{\tr}{tr}					
\DeclareMathOperator{\chain}{Chain}					


\DeclareMathOperator{\diam}{diam}

\newcommand{\loc}{\mathrm{loc}}

\newcommand{\cH}{\mathcal{H}}
\newcommand{\cM}{\mathcal{M}}
\newcommand{\cN}{\mathcal{N}}
\newcommand{\cB}{\mathcal{B}}

\newtheorem{thm}{Theorem}[section]{\bf}{\it}
\newtheorem{lemma}[thm]{Lemma}
\newtheorem{prop}[thm]{Proposition}
\newtheorem{cor}[thm]{Corollary}

{\bf}{\it}

\theoremstyle{definition}

\theoremstyle{remark}
\newtheorem{rem}[thm]{Remark}

\numberwithin{equation}{section}


\begin{document}

\title[Entropy in uniformly quasiregular dynamics]{Entropy in uniformly quasiregular dynamics}
\author[Kangasniemi]{Ilmari Kangasniemi}
\author[Okuyama]{Y\^{u}suke Okuyama}
\author[Pankka]{Pekka Pankka}
\author[Sahlsten]{Tuomas Sahlsten}
\address{Department of Mathematics and Statistics, P.O. Box 68 (Pietari Kalmin katu 5), FI-00014 University of Helsinki, Finland}
\email{ilmari.kangasniemi@helsinki.fi,pekka.pankka@helsinki.fi}
\address{
Division of Mathematics,
Kyoto Institute of Technology, Sakyo-ku,
Kyoto 606-8585 Japan}
\email{okuyama@kit.ac.jp}
\address{School of Mathematics, University of Manchester, UK}
\email{tuomas.sahlsten@manchester.ac.uk}

\begin{abstract}
Let $M$ be a closed, oriented, and connected Riemannian $n$-manifold, for $n\ge 2$, which is not a rational homology sphere. We show that, for a non-constant and non-injective uniformly quasiregular self-map $f\colon M\to M$, the topological entropy $h(f)$ is $\log \deg f$. This proves Shub's entropy conjecture in this case.
\end{abstract}

\subjclass[2010]{Primary 30C65; Secondary 57M12, 30D05}
\keywords{uniformly quasiregular mappings, entropy,
Ahlfors regular metric space}
\thanks{I.K.~was supported by the doctoral program DOMAST of the University of Helsinki. Y.O.~was partially supported by the JSPS Grant-in-Aid for Scientific Research (C), 15K04924. P.P.~was partially supported by the Academy of Finland grant \#297258. T.S.~was partially supported by the ERC starting grant $\sharp$306494, Marie Sk{\l}odowska-Curie Individual Fellowship grant $\sharp$655310 and a start-up fund from the MIMS in the University of Manchester. 
}

\date{\today}

\maketitle

\section{Introduction}\label{sect:intro}

A well-studied problem in topological dynamics of continuous self-maps $f : M \to M$ on an $n$-manifold $M$ is to relate the \textit{topological entropy} $h(f)$ of $f$ to the spectrum of its induced linear map $f_* : H_*(M;\R) \to H_*(M;\R)$ in homology, see for example the survey of Katok \cite{Katok-1977} for definitions and history of this problem. Shub conjectured \cite[\S V]{Shub-BAMS-1974} that the topological entropy $h(f)$ is bounded from below by $\log s(f_*)$, where $s(f_*)$ is the spectral radius of the action of $f$ to the homology of $M$.  The conjecture was proved for holomorphic maps $f\colon \mathbb{C}P^m \to \mathbb{C}P^m$ by Gromov in a preprint \cite{Gromov-2003} from 1977 and for $C^\infty$-smooth maps by Yomdin \cite{Yomdin-1987} in 1987.

One direction in Gromov's argument \cite{Gromov-2003} is based on a general result of Misiurewicz and Przytycki \cite{Misiurewicz-Przytycki-1977} that, 
for a $C^1$-smooth self-map $f\colon M \to M$ of a closed and oriented Riemannian manifold $M$, the logarithm of the degree $\log |\deg f|$ is a lower bound for the topological entropy. The continuity of the derivative $D f$ of the map $f$ plays a crucial role in the proof of Misiurewicz and Przytycki, which is based on the use of a continuous cochain $x\mapsto J_f(x)$ given by the Jacobian $J_f$ of the map $f$. The continuity of the derivative plays the same crucial role in the method of Yomdin \cite{Yomdin-1987}, which is based on real-algebraic sets. 

It is known that the smoothness assumptions on the map may be relaxed by additional topological assumptions on the space $M$. For example, Misiurewicz and Przytycki proved in \cite{Misiurewicz-Przytycki-1977} the entropy conjecture for all continuous maps $f : \bT^n \to \bT^n$. 

In this paper we consider the entropy conjecture in the quasiconformal category. The mappings we consider are not $C^1$-smooth but merely Sobolev regular. The distortion assumption given by quasiconformality conditions together with methods from geometric measure theory allow us to deal with the complications caused by the lack of pointwise differentiability. Before stating the main theorem, we introduce the class of uniformly quasiregular maps.

A continuous map $f\colon M\to N$ between oriented Riemannian $n$-manifolds $M$ and $N$, $n \geq 2$, is \emph{$K$-quasiregular for $K\ge 1$} if $f$ belongs to the Sobolev space $W^{1,n}_\loc(M,N)$ and satisfies the distortion inequality
\begin{equation}
\label{eq:bd}
\norm{Df(x)}^n \le K J_f(x) \quad \text{for Lebesgue a.e. }x\in M;
\end{equation}
here $\norm{Df}$ is the operator norm of the differential $Df$ of $f$ and $J_f$ is the Jacobian determinant $J_f = \det Df$, that is, $J_f \vol_M = f^*\vol_M$. In this terminology, \emph{quasiconformal maps} are quasiregular homeomorphisms, 
and $1$-quasiregular maps between Riemann surfaces are holomorphic; see e.g.~Rickman \cite[Section I.2]{Rickman} and references therein. As a technical point, we mention that by a theorem of Reshetnyak, a quasiregular map is either a discrete and open map or constant. Note also that the degree of a non-constant quasiregular map between closed and oriented Riemannian manifolds is positive.

A quasiregular self-map $f\colon M\to M$ is \emph{uniformly $K$-quasiregular} if all of its iterates $f^k = f^{\circ k} = f \circ \cdots \circ f$ for $k\ge 1$ are $K$-quasiregular. Uniformly quasiregular maps admit rich dynamics akin to dynamics of holomorphic maps of one complex variable. We refer to a survey of Martin \cite{Martin2014} for a detailed account on uniformly quasiregular maps, and merely mention here that a uniformly quasiregular map $f\colon M\to M$ induces a measurable conformal structure on $M$ in which the mapping $f$ could be considered as a rational map of $M$.

Our main theorem reads as follows; recall that an $n$-manifold $M$ is a \emph{rational cohomology sphere} if $H^*(M;\R)$ is isomorphic to $H^*(\bS^n;\R)$.

\begin{thm}\label{thm:main_result}
Let $f \colon M \to M$ be a uniformly quasiregular self-map of degree at least $2$ on a closed, connected, and oriented Riemannian $n$-manifold $M$ which is not a rational cohomology sphere. Then 
\[
	h(f) = \log \deg f.
\]
\end{thm}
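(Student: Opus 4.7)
The plan is to establish the equality by proving the two inequalities $h(f) \le \log \deg f$ and $h(f) \ge \log \deg f$ separately. The upper bound is the easier direction: since quasiregular maps are open and discrete with global topological degree $\deg f$, preimages of any point have cardinality at most $\deg f$. Combined with the fact that uniform $K$-quasiregularity gives a fixed distortion along all iterates $f^m$, a standard $(n,\eps)$-separated set count for branched coverings yields $h(f) \le \log \deg f$ without major difficulty.

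The lower bound $h(f) \ge \log \deg f$ is the main content of the theorem. The obstruction to directly transplanting the Misiurewicz--Przytycki argument is that the Jacobian $J_f$ of a quasiregular map is only locally integrable, so the continuous-cochain approach used in the $C^1$ setting fails. My plan is to replace pointwise derivative estimates by integral estimates using $\int_M J_{f^m}\,\dd\vol = (\deg f)^m \vol(M)$ together with the distortion bound $\norm{Df(x)}^n \le K J_f(x)$. To convert this integral volume growth into a lower entropy bound I would run a Gromov--Yomdin style argument: estimate the mass of graphs of the iterates $f^m$ in $M \times M$, or dually the mass of pullback currents $(f^m)^*\omega$ for suitable test forms $\omega$, and then pass from such growth rates to entropy via a current-compactness argument. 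The current-theoretic framework suggested by the paper's notation ($\curr$, $\ncurr$, $\mass$) is the natural setting for making this transition from $C^\infty$ to Sobolev regularity rigorous.

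The hypothesis that $M$ is not a rational cohomology sphere is the critical input making this lower-bound argument succeed. A uniformly quasiregular map carries an invariant measurable conformal structure and induces a pullback action $f^*$ on an appropriate Sobolev de~Rham cohomology $\cehom{*}$; the non-triviality of $H^k(M;\R)$ for some $0<k<n$ provides an intermediate cohomology class that, paired with the top-dimensional volume class, witnesses the full degree growth of $(f^m)^*$. On a rational cohomology sphere no such intermediate witness exists, and the current argument loses its handle on the volume growth. The main obstacle I foresee is making the Sobolev/current machinery rigorous --- establishing that $(f^*)^m$ has the expected spectral behavior on the relevant cohomology $\cehom{*}$, and that this spectral growth genuinely passes to a lower entropy bound --- with the uniform $K$-distortion playing the crucial role of preventing pulled-back test forms from degenerating geometrically.
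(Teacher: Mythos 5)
There is a genuine gap in the upper-bound direction, and you have the difficulty of the two directions inverted. You claim $h(f)\le\log\deg f$ follows ``without major difficulty'' from a preimage count for branched coverings; this does not work for Sobolev-regular maps. Knowing $\#f^{-1}\{y\}\le\deg f$ does not bound $(k,\eps)$-separated sets, because the pointwise a.e.\ distortion inequality $\norm{Df}^n\le K J_f$ is not a bi-Lipschitz bound: preimages of $\eps$-separated orbits need not be $\delta$-separated at any scale controlled by $\eps$ and $\deg f$, and the branch set prevents any uniform local metric control. The paper's upper bound is in fact the technically heavy direction and occupies Sections \ref{sect:Federer_Fleming}--\ref{sec:Gromov-proof}: one proves a uniform Ahlfors $n$-regularity for the graphs $\Gamma_{g_k}=(\id,f,\ldots,f^k)(M)$ via Federer--Fleming currents, slicing, and the isoperimetric inequality (Theorem \ref{thm:Gromov_interpretation}, Proposition \ref{prop:Gromov_interpretation_Euclidean}), which plugged into Gromov's $h(\Gamma)\le\lov(\Gamma)-\lodn(\Gamma)$ gives $h(f)\le\log\deg f+n\log K(f^k)/k$ and, by uniform quasiregularity, $h(f)\le\log\deg f$. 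Note this direction does \emph{not} use the cohomological hypothesis on $M$.

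The lower bound is where that hypothesis enters, but not in the way you describe, and your proposed Gromov--Yomdin route is misaimed: Gromov's volume-vs-entropy inequality gives an upper bound on entropy (volume growth controls entropy from above), and Yomdin's converse producing a lower bound requires $C^\infty$ smoothness, which Sobolev maps lack. The paper instead argues via Lyubich's variational method. The $f$-balanced measure $\mu_f$ of \cite{OkuyamaPankka} has measure-theoretic Jacobian $J_{f,\mu_f}(x)=\deg f/i(x,f)$ (Lemma \ref{lma:uniform}), so by Rokhlin's entropy formula
\[
h_{\mu_f}(f)\ge H_{\mu_f}(\eps_M\mid f^{-1}\eps_M)=\int_M\log\deg f\,\dd\mu_f-\int_M\log i(\cdot,f)\,\dd\mu_f.
\]
The hypothesis that $M$ is not a rational cohomology sphere is used precisely to know (via \cite[Theorem 1.2]{Kangasniemi}) that $\mu_f$ is absolutely continuous with respect to Lebesgue, hence $\mu_f(B_f)=0$ and the second integral vanishes, giving $h(f)\ge h_{\mu_f}(f)=\log\deg f$. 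There is no spectral analysis of $f^*$ on an intermediate cohomology group, nor any pairing of $H^k$-classes with the volume class; your heuristic about an ``intermediate cohomology witness'' is not how the hypothesis is exploited here.
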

It follows from \cite{KangasniemiPankka} that $s(f_*)=\deg f$ for non-constant uniformly quasiregular self-maps $f\colon M\to M$. Theorem \ref{thm:main_result} therefore yields the equality
\[
h(f) = \log s(f_*)
\]
answering to the Shub's entropy conjecture to the positive in this case. Note that, for expanding uniformly quasiregular mappings, Shub's entropy conjecture follows from results of Ha\"issinsky and Pilgrim \cite[Theorems 3.5.6 and 4.4.4]{Haissinsky-Pilgrim_CoaConfDYn}. 

In the proof of Theorem \ref{thm:main_result} we obtain estimates $h(f)\ge \log \deg f$ and $h(f)\le \log \deg f$ for the entropy by different methods. The lower bound employs Lyubich's variational method \cite{Lyubich-1983} and the properties \cite{Kangasniemi, OkuyamaPankka} of the equilibrium measure $\mu_f$ associated $f$. The upper bound is related to \cite[(5.0)]{Gromov-2003} in Gromov's article and it follows from isoperimetric arguments for Federer-Fleming currents \cite{Federer}. As we will discuss shortly, the cohomological assumption on $M$ has no role in the proof of the upper bound. It remains an open question whether the lower bound $h(f)\ge \log \deg f$ holds also for uniformly quasiregular mappings on rational cohomology spheres.

\medskip

In order to obtain the lower bound $h(f)\ge \log \deg f$, the main obstacle is the lack of continuity of the derivative $D f$. For this, we use the $f$-balanced measure $\mu_f$ from \cite{OkuyamaPankka} and the integer valued cochain $x\mapsto i(x,f)$ given by the local index of the map $f$ in place of cochain $x\mapsto J_f(x)$ which is only measurable in this setting.

By \cite[Theorem 1.2]{Kangasniemi}, the cohomological assumption on the manifold $M$ yields that the measure $\mu_f$ is absolutely continuous with respect to the Lebesgue measure of $M$. Using this fact, we show that the measure $\mu_f$ satisfies $h_{\mu_f}(f) = \log \deg f$, where $h_{\mu_f}(f)$ is the measure theoretic entropy of $f$ with respect to the measure $\mu_f$. The variational principle of the entropy now yields the required lower bound $h(f) \ge h_{\mu_f}(f) = \log \deg(f)$. We thank Peter Ha\"issinsky for pointing out a simplified version of the original proof based on measure theoretic Jacobians.

We also note that, as a consequence of the method of proof, we also obtain the following observation.
\begin{cor}
	Let $f \colon M \to M$ be a uniformly quasiregular self-map of degree at least $2$ on a closed, connected, and oriented Riemannian $n$-manifold $M$ which is not a rational cohomology sphere. Then the measure $\mu_f$ from \cite{OkuyamaPankka} is a measure of maximal entropy.
\end{cor}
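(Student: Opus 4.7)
The plan is to obtain the corollary essentially for free from the proof of Theorem \ref{thm:main_result}, combined with the variational principle. Recall that the variational principle for topological entropy on a compact metric space states
\[
	h(f) = \sup_{\mu} h_\mu(f),
\]
where the supremum runs over all $f$-invariant Borel probability measures $\mu$ on $M$. A measure achieving this supremum is by definition a measure of maximal entropy.

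The key observation is that the lower bound step in the proof of Theorem \ref{thm:main_result} is not merely the inequality $h(f) \ge \log \deg f$, but rather the stronger statement that the equilibrium measure $\mu_f$ of \cite{OkuyamaPankka} itself satisfies $h_{\mu_f}(f) = \log \deg f$; the topological lower bound is then obtained as a consequence of the variational principle applied to this particular measure. This is exactly the Lyubich-type variational argument sketched in the introduction, relying on the absolute continuity of $\mu_f$ granted by \cite[Theorem 1.2]{Kangasniemi} under the cohomological hypothesis on $M$.

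Given this, I would simply chain the two statements. The upper bound portion of Theorem \ref{thm:main_result}, which is independent of the cohomological assumption and follows from the isoperimetric current estimates, provides $h(f) \le \log \deg f$. Combined with $h_{\mu_f}(f) = \log \deg f$ from the lower bound argument, one obtains
\[
	h_{\mu_f}(f) = \log \deg f = h(f),
\]
so that $\mu_f$ realizes the supremum in the variational principle. Hence $\mu_f$ is a measure of maximal entropy, as claimed.

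There is no real obstacle here beyond carefully citing the intermediate equality $h_{\mu_f}(f) = \log \deg f$ from within the proof of the main theorem rather than only its weaker consequence $h(f) \ge \log \deg f$; once that identity is in hand, the corollary is immediate from the variational principle.
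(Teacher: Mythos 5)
Your proposal is correct and follows exactly the argument the paper intends for this corollary. The only minor imprecision is your assertion that the lower bound step alone establishes the \emph{equality} $h_{\mu_f}(f) = \log \deg f$; as stated and proved, Proposition \ref{prop:entropy_lower_bound} gives only $h_{\mu_f}(f) \ge \log \deg f$, and the equality is obtained by sandwiching via the variational principle and the upper bound $h(f) \le \log \deg f$: since $\log \deg f \le h_{\mu_f}(f) \le h(f) \le \log \deg f$, all quantities are equal and $\mu_f$ attains the supremum in the variational principle. Your conclusion chains these inequalities correctly, so the argument goes through as you describe; just be aware that the equality is the output of the sandwich, not an input from the lower bound step.
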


Moreover, we note that the absolute continuity of $\mu_f$ is only used in the proof to obtain that the branch set of $f$ has zero measure in $\mu_f$. Hence, the proof in fact also gives us the following, more general version of Theorem \ref{thm:main_result}.

\begin{thm}\label{thm:main_result_v2}
	Let $f \colon M \to M$ be a uniformly quasiregular self-map of degree at least $2$ on a closed, connected, and oriented Riemannian $n$-manifold $M$. Suppose that there exists an $f$-balanced Borel probability measure $\mu$ on $M$. Then the measure-theoretic entropy of $\mu$ satisfies
	\begin{equation}\label{eq:meas_entr_i_version}
		h_{\mu}(f) \geq \log \deg f - \int_M \log i(x, f) \dd \mu(x),
	\end{equation}
	where $i(\cdot, f)$ denotes the local index of $f$. In particular, if the branch set $B_f$ of $f$ satisfies $\mu(B_f) = 0$, then
	\[
		h(f) = \log \deg f,
	\]
	and $\mu$ is a measure of maximal entropy for $f$.
\end{thm}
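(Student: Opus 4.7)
The plan is to apply the Rokhlin--Parry entropy inequality for endomorphisms to the $f$-balanced measure $\mu$. The $f$-balanced property, as used in \cite{OkuyamaPankka}, amounts to the transfer-operator identity
\[
\int_M \phi \, d\mu \;=\; \frac{1}{\deg f} \int_M \Bigl(\sum_{x \in f^{-1}(y)} i(x,f)\, \phi(x)\Bigr) d\mu(y) \qquad \text{for every } \phi \in C(M).
\]
Inserting $\phi = \psi \circ f$ and using $\sum_{x\in f^{-1}(y)} i(x,f) = \deg f$ immediately yields $f_*\mu = \mu$, so $\mu$ is $f$-invariant and $h_\mu(f)$ is well defined. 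The same identity matches the defining formula of the disintegration of $\mu$ over $f_*\mu = \mu$, so uniqueness of the disintegration identifies the conditional measures of $\mu$ along the fibers of $f$ as
\[
\mu_y \;=\; \frac{1}{\deg f} \sum_{x \in f^{-1}(y)} i(x,f)\, \delta_x \qquad \text{for $\mu$-a.e.\ } y \in M.
\]

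Next, since $f$ is discrete, open, and at most $(\deg f)$-to-one on all of $M$, a standard measurable-selection argument yields a finite Borel partition $\xi = \{A_1, \dots, A_N\}$ of $M$ with $f|_{A_j}$ injective for every $j$. The Rokhlin--Parry inequality for measure-preserving endomorphisms then gives
\[
h_\mu(f) \;\geq\; h_\mu(f,\xi) \;=\; H_\mu\Bigl(\xi \,\Bigm|\, \bigvee_{n\ge 1} f^{-n}\xi\Bigr) \;\geq\; H_\mu(\xi \mid f^{-1}\mathcal{B}),
\]
where $\mathcal{B}$ is the Borel $\sigma$-algebra of $M$ and the last inequality uses the inclusion $\bigvee_{n \ge 1} f^{-n}\xi \subseteq f^{-1}\mathcal{B}$ together with the monotonicity of conditional entropy.

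Since $f|_{A_j}$ is injective, $\mu_y(A_j) = i(x_j(y),f)/\deg f$, where $x_j(y)$ is the unique element of $A_j \cap f^{-1}(y)$ when it exists. Consequently
\[
H_\mu(\xi \mid f^{-1}\mathcal{B}) \;=\; -\int_M \sum_{x \in f^{-1}(y)} \frac{i(x,f)}{\deg f} \log \frac{i(x,f)}{\deg f} \, d\mu(y),
\]
and expanding the logarithm together with one further application of the balanced identity to $\phi(x) = \log i(x,f)$ reduces the right-hand side to $\log \deg f - \int_M \log i(x,f) \, d\mu(x)$, which is \eqref{eq:meas_entr_i_version}. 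When $\mu(B_f) = 0$ we have $i(\cdot,f) \equiv 1$ $\mu$-a.e., hence $h_\mu(f) \geq \log \deg f$; combining with the variational principle $h(f) \geq h_\mu(f)$ and the upper bound $h(f) \leq \log \deg f$ proved elsewhere in the paper yields $h(f) = \log \deg f$ and that $\mu$ attains the supremum. The main technical point is the measurable-selection construction of $\xi$ when $B_f$ may have positive $\mu$-measure, together with the rigorous identification of the $\mu_y$ above as the genuine disintegration of $\mu$; after that, the rest is a direct computation.
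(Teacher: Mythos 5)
Your proposal is correct and follows essentially the same route as the paper: both proofs reduce the bound to a computation of the conditional entropy over the fiber partition $f^{-1}\eps_M$, and both extract the fiberwise conditional measures $\mu_y=\tfrac{1}{\deg f}\sum_{x\in f^{-1}(y)} i(x,f)\,\delta_x$ from the $f$-balanced identity (the paper packages this as the statement that $\deg f/i(\cdot,f)$ is a measure-theoretic Jacobian and then cites the strong Rokhlin formula \cite[Theorem 2.9.6]{Przytycki-Urbanski_book} applied to the partition into points, while you reach the same quantity by disintegrating directly and working with a finite injectivity partition $\xi$, which is merely a cosmetic difference since $H_\mu(\xi\mid f^{-1}\eps_M)=H_\mu(\eps_M\mid f^{-1}\eps_M)$). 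The one place where you should be a bit more careful, and which you yourself flag, is in identifying the disintegration: the balanced identity a priori holds for continuous test functions, and extending it to Borel ones (so that uniqueness of the Rokhlin disintegration applies) is exactly where the paper's Lemma~\ref{lma:pushfwd_set_measure} is needed; the same lemma-type argument also underlies the existence of your finite Borel partition $\xi$ with $f|_{A_j}$ injective.
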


For an estimate similar to \eqref{eq:meas_entr_i_version} in the setting of non-Archimedean dynamics, see Favre--Rivera-Letelier \cite[Section 4]{Favre-RivieraLetelier_ArchmiedeanDyn}.

\medskip

The upper bound $h(f) \le \log \deg f$ follows from the inequality
\begin{equation}
\label{eq:Gromov}
h(f) \le \log \deg f + n \log K
\end{equation}
for $K$-quasiregular self-maps $f\colon M \to M$; see \cite[(5.0)]{Gromov-2003} and the ensuing isopetrimeric argument on how to prove it. Since it seems to have gone unnoticed in the literature that the isoperimetric argument in \cite{Gromov-2003} yields a more general result, we discuss the proof of \eqref{eq:Gromov} in detail using the language of Federer-Fleming theory of currents. In the heart of the proof of \eqref{eq:Gromov} is the following uniform Ahlfors regularity result for graphs of maps, whose components are quasiregular.

\begin{thm}\label{thm:Gromov_interpretation}
Let $M$ and $N$ be closed, connected, and oriented
Riemannian $n$-manifolds for $n \geq 2$, $K\ge 1$, and 
let $g = (f_1, \ldots, f_k) \colon M \to N^k$ be a map from $M$ to $N^k$, $k\in\bN$,
where $f_1, \ldots, f_k$ are non-constant $K$-quasiregular maps $M\to N$. 
Then the image $\Gamma=\Gamma_g:=g(M)$ is Ahlfors $n$-regular. More precisely, there exists a constant $C>0$ depending only on $n,M,N$ and $f_1$ with the property that, for $y \in \Gamma$ and $r\in(0,\diam\Gamma]$, we have
\begin{gather*}
 \frac{1}{C k^\frac{n^2}{2} K^{n-1}(\min_j \deg f_j)^n}\le
 \frac{\cH^n(B_{\Gamma}(y, r))}{r^n}
 \leq C k^\frac{n}{2} K\max_j \deg f_j,
\end{gather*}
where $B_{\Gamma}(y, r) = \Gamma \cap B_{N^k}(y,r)$ with distance in $N^k$ induced by the product Riemannian metric. 
\end{thm}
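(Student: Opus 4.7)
The plan is to prove the upper and lower Ahlfors bounds separately, both by combining the area formula for $W^{1,n}$-maps with quasiregular distortion estimates on the components $f_i$. Throughout, I view $Dg(x)$ as the $nk\times n$ matrix stacking $Df_1(x),\dots,Df_k(x)$, so $J_g=\sqrt{\det(Dg^\top Dg)}$.

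For the upper bound, Hadamard's inequality applied to the columns of $Dg$, together with $\|Dg\|^2 \le \sum_i \|Df_i\|^2$ and the quasiregularity bound $\|Df_i\|^n \le K J_{f_i}$, yield a pointwise estimate of the form
\[
J_g \le C_n\, K\, k^{n/2-1}\sum_{i=1}^k J_{f_i}.
\]
The area formula then gives $\cH^n(B_\Gamma(y,r)) \le \int_{g^{-1}(B_{N^k}(y,r))} J_g$. Since $g^{-1}(B_{N^k}(y,r)) \subset f_i^{-1}(B_N(y_i,r))$ and the standard multiplicity formula yields $\int_{f_i^{-1}(B_N(y_i,r))} J_{f_i} = \deg(f_i)\vol_N(B_N(y_i,r)) \le C(M,N)\deg(f_i)\,r^n$, summing over $i$ gives the asserted upper bound.

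For the lower bound, Cauchy--Binet on $Dg$ implies $J_g \ge J_{f_1}$, and the $1$-Lipschitz projection $\pi_1\colon N^k \to N$ gives $\#g^{-1}(z) \le \#f_1^{-1}(\pi_1(z)) \le \deg f_1$. The area formula therefore yields
\[
\cH^n(B_\Gamma(y,r)) \ge \frac{1}{\deg f_1}\int_{g^{-1}(B_{N^k}(y,r))} J_{f_1}.
\]
To bound this integral from below, I fix $x_0 \in g^{-1}(y)$ and take a normal neighborhood $U$ of $x_0$ for $f_1$ with $f_1(U)=B_N(y_1,\rho)$; then $f_1|_U$ is a proper branched cover of degree $i(x_0,f_1) \le \deg f_1$ and $\int_U J_{f_1}=i(x_0,f_1)\vol_N(B_N(y_1,\rho)) \gtrsim \rho^n$. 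The scale $\rho$ is chosen proportional to $r$ using quasiregular H\"older and capacity estimates, so that $U \subset g^{-1}(B_{N^k}(y,r))$; the inner distortion inequality $\lambda_{\min}(Df_1)^n \ge J_{f_1}/K^{n-1}$ converts the loss of scale into the factor $K^{n-1}$, and accounting for local indices at branch points contributes the factor $(\min_j \deg f_j)^n$.

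The main obstacle is making the lower bound uniform in $y \in \Gamma$. Quasiregular maps are only H\"older continuous in general, and branch points of local index $e$ exhibit a $z \mapsto z^e$ profile locally, so the preimage of a small ball scales non-linearly in its radius. Controlling this scale relation uniformly across $M$, particularly at branch points of varying local index, requires careful use of normal-domain theory and modulus/capacity estimates of Rickman--Martio--V\"ais\"al\"a, and this is what dictates the precise exponents $K^{n-1}$ and $(\min_j \deg f_j)^n$ appearing in the stated bound.
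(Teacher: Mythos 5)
Your upper-bound argument is essentially the paper's: the pointwise estimate $|J_g|\le C(n)\,K\,k^{n/2-1}\sum_j J_{f_j}$ together with the area formula and a multiplicity count reproduces Lemmas \ref{lem:area_pontwise_estimate}, \ref{lem:area_thm}, \ref{lem:upper_hausdorff_bound}, and \ref{lem:global_manifold_hausdorff_bound}, and is correct.

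The lower bound is where your proposal diverges, and as sketched it has a genuine gap. Using $J_g\ge J_{f_1}$ and $N(g)\le\deg f_1$ you correctly reduce to showing
\[
\int_{g^{-1}(B_{N^k}(y,r))} J_{f_1}\,\dd\cH^n \ \gtrsim\ r^n,
\]
uniformly in $y$, $k$, and $f_2,\dots,f_k$ (subject only to the $K$-bound). You then propose to realize this via a normal neighbourhood $U$ of some $x_0\in g^{-1}(y)$ for $f_1$, with $f_1(U)=B_N(y_1,\rho)$, $\rho\asymp r$, and $g(U)\subset B_{N^k}(y,r)$. This cannot be made to work at the claimed sharpness. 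A $K$-quasiregular map is only H\"older continuous with exponent $\alpha<1$ (further degrading near branch points with the local index), so once you shrink $U$ until $f_1(U)=B_N(y_1,\rho)$, the diameter of $U$ is controlled by $\rho$ only through a H\"older modulus with exponent $<1$, and then $\diam f_j(U)$ for $j\ge 2$ is controlled by $\diam U$ through another H\"older modulus with exponent $<1$. Chasing these through, the condition $f_j(U)\subset B_N(y_j,r)$ for all $j$ forces $\rho\lesssim r^\gamma$ with $\gamma>1$ in general, and the resulting estimate is $\cH^n(\Gamma_{y,r})\gtrsim\rho^n\gtrsim r^{n\gamma}$, which is strictly weaker than $r^n$. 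The pointwise ``inner distortion'' inequality on $Df_1$ cannot be integrated to a distortion-of-sets estimate because $Df_1$ is not continuous, and appealing informally to capacity/modulus estimates does not produce the linear scale comparison $\rho\asymp r$ that your argument requires; indeed, no such local estimate can be uniform in $k$ and in $f_2,\dots,f_k$.

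The paper's proof of the lower bound is a global, current-theoretic argument that deliberately avoids selecting any ball in the domain. It represents $\Gamma_{y,r}$ by the pushed-forward Sobolev current $[\Gamma_{y,r}]$ (Subsection 5.2.1), proves $\partial[\Gamma_{y,r}]=0$ inside $B^{kn}(y,r)$ (Lemma \ref{lem:local_normality}), slices by the distance function $h_y=|\cdot-y|$ to get
\[
\mass_{\R^{kn}}\bigl([\Gamma_{y,t}]\bigr)\ \ge\ \frac{1}{n}\int_0^t \mass_{\R^{kn}}\bigl(\partial[\Gamma_{y,s}]\bigr)\,\dd s
\]
(Lemma \ref{lemma:integral_estimate_preliminary}), and applies Federer's isoperimetric inequality to the integer-multiplicity top-dimensional currents $(\proj_j)_*[\Gamma_{y,t}]=\mathcal{L}^n\llcorner u_t$ (Lemma \ref{lem:push_isoperimetric_estimate}). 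A Bihari--LaSalle/Gr\"onwall-type integral inequality (Lemma \ref{lem:integral_inequality}) then closes the loop and yields $\mass([\Gamma_{y,r}])\gtrsim r^n$ with the stated powers of $K$ and $\min_j\deg f_j$. Your observation that $\int_{g^{-1}(B(y,r))}J_{f_1}\ge\cH^n\bigl(\proj_1(\Gamma_{y,r})\bigr)$ does isolate the right quantity --- that $\proj_1$ sends $\Gamma_{y,r}$ onto a set of measure $\gtrsim r^n$ is precisely what the slicing--isoperimetry step encodes --- but establishing that bound is exactly the degree-theoretic (current) input your sketch is missing.
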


Using this theorem we prove inequality \eqref{eq:Gromov} in Section \ref{sec:Gromov-proof}; see Theorem \ref{thm:Gromov-original}. This completes the proof of Theorem \ref{thm:main_result}.

The proof of Theorem \ref{thm:Gromov_interpretation} consists of two parts.
The upper estimate for the Hausdorff measure reduces to the area formula for Sobolev mappings. The lower estimate is more delicate. Since the mapping $g$ is merely Sobolev regular, we consider an $n$-current associated to $\Gamma_g$. The key step in the proof is to apply slicing and an isoperimetric inequality to this $n$-current to obtain a local lower bound for the volume of $\Gamma_g$. It seems to us that this is also the idea in the proof of \cite[(5.0)]{Gromov-2003}, although it does not use currents explicitly.

\medskip

We finish this introduction with a discussion on the relation of our results to open questions on uniformly quasiregular dynamics. In the case of Riemann surfaces, holomorphic dynamics has a clear trichotomy into different cases: the sphere $\bS^2$ carries a rich theory with various examples, on the torus $\bT^2$ the mappings are so-called Latt\`es maps, and on higher dimensional surfaces the theory collapses to dynamics of homeomorphisms. 

On higher-dimensional Riemannian manifolds, a similar trichotomy seems to arise in uniformly quasiregular dynamics. The sphere $\bS^n$ and other spherical space forms admit a rich theory, see e.g.\ Iwaniec--Martin \cite{Iwaniec-Martin_AASF}, Peltonen \cite{Peltonen-CGD}, and Martin--Peltonen \cite{Martin-Peltonen-PAMS}. The torus $\bT^n$ and its branched quotients admit uniformly quasiregular maps of Latt\`es type, see e.g.\ Mayer \cite{Mayer1997paper} and Martin--Mayer--Peltonen \cite{Martin-Mayer-Peltonen}.
Finally, the existence of a uniformly quasiregular map $M \to M$ on a closed manifold yields that the manifold $M$ is so-called quasiregularly elliptic, that is, there exists a non-constant quasiregular map $\R^n \to M$; see Kangaslampi \cite{Kangaslampi-thesis} or Iwaniec--Martin \cite[Theorem 19.9.3]{Iwaniec-Martin-book}. 
Thus, hyperbolic Riemannian manifolds and manifolds with large fundamental group or cohomology do not carry uniformly quasiregular maps by results of Varopoulos \cite[Theorem X.11]{Varopoulos-book} and Bonk--Heinonen \cite{Bonk-Heinonen_Acta}. More precisely, the dimension of the cohomology ring $H^*(M;\R)$ of $M$ is at most $2^n$ by the main theorem of \cite{Kangasniemi}; see also Prywes \cite{Prywes}.

To complete this picture, it becomes a question whether a general quasiregularly elliptic manifold carries a uniformly quasiregular mapping of higher degree, and whether these mappings are actually Latt\`es maps if the manifold in question is not a rational cohomology sphere. Encouraged by results and conjectures of Martin and Mayer in \cite{Martin-Mayer-2003} on uniformly quasiregular self-maps of spheres, we expect the second question to have a positive answer. The following conjecture is from \cite{Kangasniemi}:
\emph{Let $M$ be a closed, oriented, and connected Riemannian $n$-manifold for $n\ge 2$ which is not a rational cohomology sphere. Then every uniformly quasiregular self-map $f$ of $M$ comes from the Latt\`es construction.}

We find the question interesting since, as pointed out in Martin--Mayer \cite{Martin-Mayer-2003}, it is similar to the invariant line field conjecture of Man\'e, Sad, and Sullivan \cite{Mane-Sad-Sullivan}.

\subsection*{Organization of the article}

The article consists of two parts; Section \ref{sec:qrmaps} discussing the preliminaries on quasiregular maps is common to both of these. In the first part (Sections \ref{sec:pre_ent}--\ref{sec:lower_bound}), we prove the lower bound $h(f) \ge \log \deg f$ for the topological entropy using Lyubich's method based on measure theoretic entropy. 

In the second part (Sections \ref{sect:Federer_Fleming}--\ref{sec:Gromov-proof}) we recall first some results in the Federer--Fleming theory of currents in Section \ref{sect:Federer_Fleming}.  In Sections \ref{sec:Ahlfors_euclidean} and \ref{sec:Gromov_for_maps}, we then discuss the proof of Theorem \ref{thm:Gromov_interpretation} based on Gromov's original argument. Finally, in Section \ref{sec:Gromov-proof}, we show how the upper bound $h(f)\le \log \deg f$ follows from Theorem \ref{thm:Gromov_interpretation}. 

\bigskip
\noindent
{\bf Acknowledgments} We thank Petri Ola for suggesting us to look at the Gr\"onwall's inequality, which plays a key role in the upper bound for the entropy. We also thank Peter Ha\"issinsky, who in the process of pre-examining the PhD thesis of the first named author suggested multiple improvements to the paper.



\section{Preliminaries on quasiregular maps} \label{sec:qrmaps}

\subsection{Quasiregular maps}
Let $n\ge 2$, and let $M$ and $N$ be oriented Riemannian $n$-manifolds. 
By a theorem of Reshetnyak, a non-constant quasiregular map $f:M\to N$ is open and discrete, that is, $f(W)\subset N$ is open for any open set $W \subset M$ and $f^{-1}\{y\}\subset M$ is discrete for every $y\in N$. Moreover, $f$ satisfies the Lusin (N)-condition, that is,  $f(E)\subset N$ is Lebesgue null if $E\subset M$ is a null set. The \emph{branch set} $B_f$ of $f$ is the set of points at which $f$ fails to be a local homeomorphism. The branch set $B_f$ has topological dimension at most $n-2$ by the Cernavskii--V\"ais\"al\"a theorem (see \cite{Vaisala1966paper}) and Lebesgue measure zero.

For $E \subset M$ and $y \in N$, the \emph{multiplicity $N(f, y, E)$ of $f$ at $y$ with respect to $E$} is $\#(f^{-1}\{y\} \cap A)$. We set also $N(f, y):= N(f, y, M)$, $N(f, E):= \sup_{y \in N} N(f, y, E)$, and 
\begin{gather*}
 N(f):= \sup_{y \in N} N(f, y) = N(f, N).
\end{gather*}

As a preliminary step for the definition of the local index of $f$ at $x$, we denote by $B_N(y,r)$ the metric ball of radius $r>0$ centered at $y\in N$ in $N$. Since $f$ is discrete and open, there exists, for each $x\in M$, a radius $r_x>0$ for which the $x$-component $U(x,f,r_x)$ of the preimage $f^{-1}B_N(f(x),r_x)$ is a normal neighborhood of $x$, that is, we have $fU(x,f,r_x) = B_N(f(x),r_x)$, $\partial fU(x,f,r_x) = \partial B_N(f(x),r_x)$, and $f^{-1}(f(x)) \cap \overline{U(x,f,r_r)} = \{x\}$. In particular, $f$ restricts to a proper map 
\[
f|_{U(x,f,r_x)} \colon U(x,f,r_x) \to B_N(f(x), r_x)
\]
and induces a homomorphism 
\[
(f|_{U(x,f,r_x)})^* \colon H^n_c(B_N(x,r);\Z) \to H^n_c(U(x,f,r_x);\Z)
\]
in compactly supported cohomology.

The \emph{local index $i(x,f)\in \Z$ of $f$ at $x$} is the unique integer satisfying
\[
(f|_{U(x,f,r_x)})^*c_{B_N(f(x),r_x)} = i(x,f) c_{U(x,f,r_x)},
\]
where the cohomology classes $c_{U(x,f,r_x)}$ and $c_{B_N(f(x),r_x)}$ are generators of $H^n_c(B_N(x,r);\Z)$ and $H^n_c(U(x,f,r_x);\Z)$, respectively, induced by orientations of $M$ and $N$. The local index is independent on $r_x$ and hence well-defined. Note that, if $f$ is non-constant, we have $i(x,f)\ge 1$ for each $x\in M$ and we have the characterization that $x\in B_f$ if and only if $i(x,f)>1$. 

More globally, for a quasiregular map $f\colon M\to N$ between closed, oriented, and connected Riemannian $n$-manifolds $M$ and $N$, the degree $\deg f\in \Z$ of $f$ is the integer satisfying $f^*(c_N) = (\deg f)c_M$ for generators $c_M$ and $c_N$ of $H^n(M;\Z)$ and $H^n(N;\Z)$, respectively. Again, if $f$ is non-constant, then $\deg f\ge 1$ and
\begin{gather*}
\sum_{x \in f^{-1}\{y\}} i(x,f)=\deg f\quad\text{for every }y\in N.
\end{gather*} 
In particular, we have $N(y, f) = N(f) = \deg f$ for every $y \in N \setminus f(B_f)$.

We refer to the monograph of Rickman \cite[Chapter I]{Rickman} for a more detailed discussion on these properties of quasiregular mappings.

\subsection{Uniformly quasiregular self-maps}

Let $f\colon M\to M$ be a uniformly quasiregular self-map of a closed, oriented, and connected Riemannian $n$-manifold $M$. The \emph{Fatou set} $F(f)$ of $f$ is the region of normality of the family $\{f^k:k\in\bN\}$, that is, the set of all points $x \in M$ for which $\{f^k|U : k \in \bN\}$ is normal 
on some open neighborhood $U$ of $x$. 
The \emph{Julia set $J(f)$} of $f$ is $M\setminus F(f)$.

The Julia set $J(f)$ is non-empty if $\deg f>1$. In this case, there exists by \cite{OkuyamaPankka} an $f$-balanced probability measure $\mu_f$ on $M$, that is, 
\[
	f^*\mu_f = (\deg f)\mu_f.
\]
Here, the pull-back measure is defined using the push-forward of continuous functions under quasiregular maps; see Heinonen--Kilpel\"ainen--Martio \cite[Section 14]{HeinonenKilpelainenMartio2006book}. In particular, if $\eta \in C(M, \R)$ is a continuous function on the closed manifold $M$, then the formula
\begin{equation}\label{eq:pushforward_def}
	(f_* \eta)(x) = \sum_{z \in f^{-1}\{x\}} i(z, f) \eta(z)
\end{equation}
for $x \in M$ defines a continuous function $f_* \eta \in C(M, \R)$. Hence, given a finite Borel measure $\mu$ on $M$, the Riesz representation theorem provides a unique regular Borel measure $f^* \mu$ satisfying
\[
	\int_M \eta \dd f^*\mu = \int_M f_*\eta \dd \mu
\]
for every $\eta \in C(M, \R)$.

The measure $\mu_f$ is the weak-$\ast$ -limit of the measures $(\deg f^k)^{-1} (f^k)^*\vol_M$, where we identify the volume form $\vol_M$ with the Lebesgue measure on $M$ and tacitly assume that $\vol_M(M) = 1$, and the support of $\mu_f$ is the Julia set $J(f)$ of $f$. From now on, we use the notation $\mu_f$ to denote this particular measure.

By \cite[Theorem 1.2]{Kangasniemi}, the measure $\mu_f$ is absolutely continuous with respect to the Lebesgue measure if the manifold $M$ is not a rational cohomology sphere. Thus, similarly as in the holomorphic dynamics of one complex variable, we have that the branch set has $\mu_f$-measure zero. We record this fact as a lemma for the further use.

\begin{lemma}\label{lma:nobranchset} 
Let $M$ be a closed, oriented, and connected Riemannian $n$-manifold for which $H^n(M;\Q)\not\cong H^*(\bS^n;\Q)$, and let $f\colon M\to M$ be a uniformly quasiregular self-map of degree at least $2$. Then 
\[
\mu_f(f^{-1} f(B_f)) = \mu_f(f(B_f)) = \mu_f(B_f) = 0.
\]
\end{lemma}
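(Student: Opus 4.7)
The plan is to verify the three measure-zero claims in reverse order of how they are listed, since each one will be derived from the previous. The two key inputs are the absolute continuity $\mu_f \ll \vol_M$ (from \cite[Theorem 1.2]{Kangasniemi}, which uses the cohomological hypothesis on $M$) and the $f$-balanced relation $f^*\mu_f = (\deg f)\mu_f$.

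First I would show $\mu_f(B_f) = 0$. The branch set $B_f$ of a non-constant quasiregular map is known to have Lebesgue measure zero (from the preliminaries in Section \ref{sec:qrmaps}), so the absolute continuity of $\mu_f$ with respect to $\vol_M$ immediately yields $\mu_f(B_f)=0$. Next, I would show $\mu_f(f(B_f))=0$. By Reshetnyak's theorem, the map $f$ satisfies the Lusin $(N)$-condition, so $f(B_f)$ is also a Lebesgue null set, and again absolute continuity gives $\mu_f(f(B_f))=0$.

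The third step is the main conceptual one: deducing $\mu_f(f^{-1}f(B_f)) = 0$ from $\mu_f(f(B_f)) = 0$. For this I would unpack the definition of $f^{*}\mu_f$ via the Riesz representation of \eqref{eq:pushforward_def}. Approximating the characteristic function of a Borel set by continuous functions, one obtains the standard pullback identity
\[
(f^*\mu_f)(A) \;=\; \int_M \sum_{z \in f^{-1}\{y\} \cap A} i(z,f) \, \dd\mu_f(y)
\]
for every Borel set $A \subset M$. Applying this with $A = f^{-1}(E)$ for $E := f(B_f)$, the inner sum either collapses to $\sum_{z \in f^{-1}\{y\}} i(z,f) = \deg f$ when $y \in E$, or vanishes when $y \notin E$. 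Hence $(f^*\mu_f)(f^{-1}(E)) = (\deg f)\, \mu_f(E)$. Combining this with the $f$-balanced identity $f^*\mu_f = (\deg f)\mu_f$ gives $(\deg f)\,\mu_f(f^{-1}(E)) = (\deg f)\,\mu_f(E) = 0$, and since $\deg f \geq 2$ we conclude $\mu_f(f^{-1}f(B_f)) = 0$.

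The only mildly delicate point is justifying the pullback identity for Borel sets from the continuous-function definition of $f^*\mu_f$; this is essentially a monotone class / outer regularity argument relying on the fact that $f$ is discrete and open and that the multiplicities $i(z,f)$ are bounded above by $\deg f$. Everything else is a direct consequence of absolute continuity and the Lusin $(N)$-condition.
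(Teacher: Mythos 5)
Your proof is correct, and it diverges from the paper's in an interesting way on the last equality. The paper handles all three sets uniformly: it invokes Rickman \cite[Proposition I.4.14]{Rickman}, which asserts that $B_f$, $f(B_f)$, and $f^{-1}f(B_f)$ are all Lebesgue null for a non-constant quasiregular map (transported to the manifold via bilipschitz charts), and then concludes directly from the absolute continuity $\mu_f \ll \vol_M$. Your first two steps (Lebesgue nullity of $B_f$ and of $f(B_f)$ via Lusin $(N)$, plus absolute continuity) match this. For the third set, however, instead of citing the geometric fact that $f^{-1}f(B_f)$ is Lebesgue null, you deduce $\mu_f(f^{-1}E) = \mu_f(E)$ for Borel $E$ from the $f$-balanced identity $f^*\mu_f = (\deg f)\mu_f$; this is exactly the $f$-invariance of $\mu_f$ and it is a clean dynamical argument. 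The justification you flag as ``mildly delicate''---extending the pullback identity from continuous functions to characteristic functions of Borel sets---is precisely what the paper's Lemma \ref{lma:pushfwd_set_measure} provides, so your route implicitly requires reordering it to come before this lemma (which is harmless, as it is independent). One small payoff of your approach: the third equality uses only the $f$-balanced property and $\mu_f(f(B_f)) = 0$, not absolute continuity directly, so it would transfer to more general $f$-balanced measures; the paper's route is shorter but leans entirely on the geometric nullity result.
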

\begin{proof}
By Rickman \cite[Proposition I.4.14]{Rickman} and an application of bilipschitz charts, the sets $f^{-1} f(B_f)$, $f(B_f)$, and $B_f$ are Lebesgue null. Since $\mu_f$ is absolutely continuous with respect to Lebesgue measure by \cite[Theorem 1.2]{Kangasniemi} under the assumption $H^*(M;\Q)\not\cong H^*(\bS^n;\Q)$, the claim follows.
\end{proof}

Finally, we point out an explicit formula for the measures of Borel sets under a pulled-back measure $f^*\mu$. We first note that we may in fact define $f_* \eta$ even for non-continuous $\eta$ using \eqref{eq:pushforward_def}.

\begin{lemma}\label{lma:pushfwd_set_measure}
	Let $M$ be a closed, oriented, and connected Riemannian $n$-manifold, let $f\colon M\to M$ be a non-constant quasiregular self-map, and let $\mu$ be a finite Borel measure on $M$. Then for every Borel set $E \subset M$, the function $f_* \mathcal{X}_E$ is Borel, and moreover we have
	\[
		f^* \mu(E) = \int_M f_* \mathcal{X}_E \dd \mu,
	\] 
	where $\mathcal{X}_E$ denotes the characteristic function of $E$.
\end{lemma}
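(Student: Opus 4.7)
The plan is to reduce from continuous test functions to characteristic functions by monotone approximation, and then extend from open sets to all Borel sets via a Dynkin $\pi$--$\lambda$ argument. Throughout, the key observation is that $f^{-1}\{x\}$ is finite for every $x$ (since $f$ is discrete on a compact manifold), so the pointwise sum defining $f_*\eta$ is a finite sum at each point and monotone convergence applies term by term.

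First I would handle the open case. Given an open set $U \subset M$, choose a sequence $\eta_m \in C(M,[0,1])$ with $\eta_m \nearrow \mathcal{X}_U$ pointwise (e.g.\ $\eta_m(x) = \min\{1, m \cdot \operatorname{dist}(x, M \setminus U)\}$). Since the sum defining $f_*$ has at most $\deg f$ terms at each point, $f_*\eta_m(x) \nearrow f_*\mathcal{X}_U(x)$ for every $x \in M$; in particular $f_*\mathcal{X}_U$ is Borel as a pointwise limit of continuous functions. Applying the defining identity $\int \eta_m \, d f^*\mu = \int f_*\eta_m \, d\mu$ and passing to the limit with monotone convergence on both sides yields $f^*\mu(U) = \int_M f_*\mathcal{X}_U \, d\mu$. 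In particular, taking $U = M$ and using $\sum_{z \in f^{-1}\{x\}} i(z,f) = \deg f$, we obtain $f^*\mu(M) = (\deg f)\mu(M)$.

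Next I would show that the collection
\[
 \mathcal{A} = \bigl\{ E \subset M \text{ Borel} : f_*\mathcal{X}_E \text{ is Borel and } f^*\mu(E) = \smallint_M f_*\mathcal{X}_E \, d\mu \bigr\}
\]
is a $\lambda$-system. Closure under complements follows from the identity $f_*\mathcal{X}_{M \setminus E} = \deg f - f_*\mathcal{X}_E$, together with the computation
\[
 f^*\mu(M \setminus E) = (\deg f)\mu(M) - f^*\mu(E) = \smallint_M \bigl( \deg f - f_*\mathcal{X}_E \bigr) \, d\mu.
\]
Closure under countable increasing unions $E_n \nearrow E$ is again immediate: $f_*\mathcal{X}_{E_n}(x) \nearrow f_*\mathcal{X}_E(x)$ at every point by monotone convergence on the finite sum, so $f_*\mathcal{X}_E$ is Borel, and both $f^*\mu(E_n) \to f^*\mu(E)$ and $\int f_*\mathcal{X}_{E_n} \, d\mu \to \int f_*\mathcal{X}_E \, d\mu$ by standard measure-theoretic continuity.

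Since $\mathcal{A}$ contains the $\pi$-system of open subsets of $M$, the Dynkin $\pi$--$\lambda$ theorem gives $\mathcal{A} = \mathcal{B}(M)$, which is the claim. No part of this is particularly delicate; the only point to watch is that the pointwise sum $f_*\eta(x) = \sum_{z \in f^{-1}\{x\}} i(z,f) \eta(z)$ is genuinely finite (bounded by $\deg f$ terms) so that the monotone convergence arguments used both in the open-set step and in the $\lambda$-system verification apply directly without any integrability issues.
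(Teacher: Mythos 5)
Your approach is valid and genuinely different from the paper's. The paper first establishes that $f_*\mathcal{X}_E$ is Borel for every Borel $E$ by an explicit decomposition: it partitions $M$ into the Borel level sets $A_j=\{i(\cdot,f)=j\}$ (using upper semicontinuity of the local index), refines this to a countable Borel partition into pieces on which $f$ is injective (using normal neighborhoods and second countability of $M$), and then observes that $f_*\mathcal{X}_E = j\,\mathcal{X}_{f(E)}$ whenever $f|_E$ is injective and $E\subset A_j$, with $f(E)$ Borel because $f$ is continuous and finite-to-one; the integral identity then follows by approximating a continuous test function from below by simple functions and invoking uniqueness in the Riesz representation theorem. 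You approximate in the opposite direction, from continuous functions down to characteristic functions of open sets, and pass from open sets to all Borel sets by a Dynkin argument. Both routes rest on the same two facts (finiteness of fibers and the bound $i(\cdot,f)\le\deg f$); the paper's route is more explicit about the local structure of a quasiregular map, while yours is shorter and more portable to settings where only the pushforward on continuous functions is given.

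One correction is needed in your $\lambda$-system verification. Containing $M$ and being closed under complements and under increasing unions is \emph{not} the $\lambda$-system axiom set, and those properties alone do not yield the Dynkin $\pi$--$\lambda$ theorem: you need closure under proper differences, i.e.\ $A\subset B$ with $A,B\in\mathcal{A}$ implies $B\setminus A\in\mathcal{A}$, or equivalently closure under countable disjoint unions. (For a counterexample to the weaker axiom set, take $\Omega=\{1,2,3\}$ and $\mathcal{D}=\{\emptyset,\Omega,\{1\},\{2\},\{2,3\},\{1,3\}\}$, which is closed under complements and trivially under increasing unions, contains the generating $\pi$-system $\{\emptyset,\{1\},\{2\}\}$, yet $\{3\}\notin\mathcal{D}$.) Fortunately your $\mathcal{A}$ does satisfy the correct axiom, by the same linearity you used for complements: for $A\subset B$ one has $f_*\mathcal{X}_{B\setminus A}=f_*\mathcal{X}_B-f_*\mathcal{X}_A$, which is Borel, and $f^*\mu(B\setminus A)=f^*\mu(B)-f^*\mu(A)$ since $f^*\mu$ is a finite measure, so the integral identity carries over. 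With this adjustment your proof is complete.
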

\begin{proof}
	The most involved part of the proof is showing that $f_* \mathcal{X}_E$ is Borel; after that, the rest is a standard measure theory argument. Indeed, suppose that $f_* \mathcal{X}_E$ is Borel for every Borel set $E$. Then we may define a Borel measure $\nu$ on $M$ by
	\[
		\nu(E) = \int_M f_* \mathcal{X}_E \dd \mu.
	\]
	It is easily seen that $\nu$ is a finite Borel measure. Hence, it is also regular. Moreover, $\nu$ satisfies by definition the formula 
	\begin{equation}\label{eq:nu_formula}
		\int_M \eta \dd \nu = \int_M f_* \eta \dd \mu
	\end{equation}
	for simple Borel functions $\eta$. Since the operator $f_*$ is bounded in the sup-norm, we obtain \eqref{eq:nu_formula} for continuous $\eta$ by approximating with simple functions. Hence, $\nu = f^* \mu$ by the uniqueness of the measure given by the Riesz representation theorem, and the remainder of the claim holds.
	
	It remains therefore to show that $f_* \mathcal{X}_E$ is Borel whenever $E$ is Borel. We partition $M$ into sets $A_1, \dots, A_{\deg f}$, where $i(x, f) = j$ whenever $x \in A_j$. The sets $A_j$ are Borel, since the map $i(\cdot, f)$ is upper semicontinuous; see eg.\ \cite[Proposition I.4.10]{Rickman} for the argument.
	
	We begin by showing a special case. Suppose $E \subset M$ is Borel, $f\vert_E$ is injective, and $E \subset A_j$ for some $j \in \{1, \dots, \deg f\}$. Then it is reasonably easily seen from the definition of the push-forward that $f_* \mathcal{X}_E = j \mathcal{X}_{f(E)}$. Since $f$ is a continuous finite-to-one map, it maps Borel sets to Borel sets: see e.g.\ \cite[Theorem 4.12.4]{Srivastava_Borel}. Therefore, in this case $f_* \mathcal{X}_E$ is Borel.
	
	Next, we find a partition of $M$ into countably many Borel sets $B_i \subset M$, such that $f\vert_{B_i}$ is injective and $B_i \subset A_j$ for some $j$. Indeed, for a given $j \in \{1, \dots, \deg f\}$ if $x \in A_j$ and $U$ is a normal neighborhood of $x$ with respect to $f$, then $f$ is injective on $A_j \cap U$. Since $A_j$ is a subset of a second-countable metric space $M$, we may cover it with countably many such sets $A_j \cap U$. Hence, we obtain the desired Borel partition $B_j$.
	
	Finally, suppose that $E \subset M$ is Borel. Then we may write $\mathcal{X}_E$ as a countable sum of functions $\mathcal{X}_{E \cap B_j}$. By the special case we covered, $f_* \mathcal{X}_{E \cap B_j}$ is Borel for every $j$. Hence, we may write $f_* \mathcal{X}_E$ as a countable sum of non-negative Borel functions. Since pointwise limits of Borel functions are Borel, we obtain that $f_* \mathcal{X}_E$ Borel, which concludes the proof.
\end{proof}

\section{Preliminaries on entropy}
\label{sec:pre_ent}

\subsection{Topological entropy} \label{sec:topentropy}
Let $(X, d)$ be a metric space. For each $k\in\bN$, we denote by $d_{k,\infty}$ the sup-metric, induced by $d$, on $X^{k+1}$. That is, for any $x=(x_0,\ldots,x_k)$ and $x'=(x_0',\ldots,x_k')\in X^{k+1}$,
\[
d_{k,\infty}(x, x') := \sup_{j\in\{0,\ldots,k\}} d(x_j, x'_j).
\]
For any $\eps>0$ and $Y \subset X^{k+1}$, we also define the counting function 
\[
N_\eps(Y) := \max \Bigl\{\# E : E \subset Y,
\inf_{x,x'\in E, x\ne x'}d_{k,\infty}(x, x') \geq \eps\Bigr\}
\]
for the discrete volume of $Y$ at scale $\varepsilon$.

A \emph{graph over $X$} is by definition a subset of $X^2$. 
For any $\Gamma\subset X^2$,
the \emph{$k$-chain of $\Gamma$} is defined by
\begin{align*}
 \chain_k(\Gamma)
 := \{(x_0, \ldots, x_k) \in X^{k+1} : (x_{j-1}, x_j) \in \Gamma \text{ for any } j \in \{1, \ldots, k\} \},
\end{align*}
and for each $\epsilon>0$, we set
\[
 h_\eps(\Gamma) := \limsup_{k \to \infty} \frac{1}{k} \log(N_\eps (\chain_k(\Gamma))).
\]
The \emph{entropy} $h(\Gamma)$ of $\Gamma$ is 
\[
h(\Gamma) := \lim_{\eps \to 0} h_\eps(\Gamma);
\]
note that the limit on the right hand side always exists.

The Bowen--Dinaburg definition of the {\em topological entropy}
$h(f)$ of a continuous self-map $f$ on $X$ is
\begin{gather*}
 h(f):=h(\Gamma_{(\id_X,f)}),
\end{gather*}
where $\Gamma_{(\id_X,f)}:=(\id_X,f)(X)\subset X^2 $ is the graph of $f$. The topological entropy is a topological invariant whenever $(X,d)$ is compact \cite{Bowen1971}.

\subsubsection{Entropy, volume, and density}
Let $M$ be a closed Riemannian $n$-manifold. For each $k\in \N$, we let $\cH^n$ to be the Hausdorff $n$-measure on the $(nk)$-dimensional product Riemannian manifold $M^k$.

For each $\eps > 0$, the \emph{$\eps$-density} $\Dens_\eps(Y)$ of a $\Hausd^n$-measurable set $Y \subset M^{k+1}$ is defined by
\begin{align*}
	\Dens_\eps(Y) 
	= \inf_{x \in Y} \Hausd^n(Y \cap D_{k,\infty}(x, \eps)),
\end{align*}
where $D_{k,\infty}(x,\eps):=\{y\in M^{k+1}:d_{k,\infty}(x,y)<\epsilon\}$.

For any $\Gamma\subset M^2$,
the \emph{logarithmic volume} $\lov(\Gamma)$ of $\Gamma$ is defined by
\[
\lov(\Gamma) = \limsup_{k \to \infty} \frac{1}{k}\log\bigl(\Hausd^n(\chain_k(\Gamma))\bigr),
\]
and the \emph{logarithmic density} $\lodn(\Gamma)$ of $\Gamma$ by
\[
\lodn(\Gamma) = \limsup_{\eps \to 0} \lodn_\eps(\Gamma),
\]
where, for each $\epsilon>0$, 
\[
\lodn_\eps(\Gamma) := \liminf_{k \to \infty} \frac{1}{k} \log\bigl(\Dens_\eps (\chain_k(\Gamma))\bigr).
\]

For completeness, we include a proof of the following key estimate.
\begin{thm}[{\cite[(1.1)]{Gromov-2003}}]\label{lem:lov-lodn-bound}
Let $M$ be a closed Riemannian $n$-manifold and let $\Gamma\subset M^2$ be a graph. Then
\begin{equation}
\label{eq:h-lov-lodn}
h(\Gamma) \leq \lov(\Gamma) - \lodn(\Gamma).
\end{equation}
\end{thm}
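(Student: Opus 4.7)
The plan is to run the standard packing/volume-density argument comparing counting and Hausdorff measure at two different scales. Fix $\varepsilon > 0$ and $k \in \bN$. Choose a maximal $\varepsilon$-separated set $E \subset \chain_k(\Gamma)$ with respect to $d_{k,\infty}$, so that $\#E = N_\varepsilon(\chain_k(\Gamma))$. For distinct $x, x' \in E$ we have $d_{k,\infty}(x,x') \geq \varepsilon$, hence the open balls $D_{k,\infty}(x, \varepsilon/2)$ with $x \in E$ are pairwise disjoint inside $M^{k+1}$. This gives the packing inequality
\[
\sum_{x \in E} \Hausd^n\bigl(\chain_k(\Gamma) \cap D_{k,\infty}(x, \varepsilon/2)\bigr) \leq \Hausd^n(\chain_k(\Gamma)).
\]
Since each point $x \in E$ lies in $\chain_k(\Gamma)$, each summand on the left is bounded below by $\Dens_{\varepsilon/2}(\chain_k(\Gamma))$, and so
\[
N_\varepsilon(\chain_k(\Gamma)) \cdot \Dens_{\varepsilon/2}(\chain_k(\Gamma)) \leq \Hausd^n(\chain_k(\Gamma)).
\]

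Next I would take logarithms, divide by $k$, and pass to the limit in $k$. Using the standard inequality $\limsup_k (a_k - b_k) \leq \limsup_k a_k - \liminf_k b_k$ (valid as soon as the right-hand side is not of indeterminate form, which can be arranged by treating the degenerate cases separately), I obtain
\[
h_\varepsilon(\Gamma) \leq \lov(\Gamma) - \lodn_{\varepsilon/2}(\Gamma).
\]

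Finally, observe that $\lodn_\eta(\Gamma)$ is non-decreasing in $\eta$: if $\eta_1 < \eta_2$ then $D_{k,\infty}(x,\eta_1) \subset D_{k,\infty}(x,\eta_2)$, so $\Dens_{\eta_1}(\chain_k(\Gamma)) \leq \Dens_{\eta_2}(\chain_k(\Gamma))$, and the monotonicity transfers to $\lodn_\eta$. Hence the $\limsup$ in the definition of $\lodn(\Gamma)$ coincides with the infimum over $\eta > 0$, which in particular gives $\lodn(\Gamma) \leq \lodn_{\varepsilon/2}(\Gamma)$. Substituting yields $h_\varepsilon(\Gamma) \leq \lov(\Gamma) - \lodn(\Gamma)$ for every $\varepsilon > 0$, and letting $\varepsilon \to 0$ produces \eqref{eq:h-lov-lodn}.

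The main obstacle is bookkeeping rather than a deep step: one must be careful with the scale mismatch between $\varepsilon$ and $\varepsilon/2$, apply the correct monotonicity of $\lodn_\eta$ to eliminate this mismatch in the limit, and check that the $\limsup$/$\liminf$ inequality is applied in a regime where no indeterminate $\infty - \infty$ arises (the only genuinely degenerate case, $\lov(\Gamma) = +\infty$, makes \eqref{eq:h-lov-lodn} trivial).
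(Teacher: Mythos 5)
Your argument is correct and follows the same packing/volume-density strategy as the paper's own proof; the only cosmetic difference is that you pack balls of radius $\varepsilon/2$ around an $\varepsilon$-separated set, whereas the paper packs balls of radius $\varepsilon$ around a $2\varepsilon$-separated set, which after the same $\limsup/\liminf$ split gives the identical conclusion.
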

\begin{proof}
Let $k \geq 2$, $\eps > 0$, and $\delta > 0$,
and let $d$ be the induced Riemannian distance in $M$ 
and $d_{k,\infty}$ be the sup-metric on $M^{k+1}$ induced by $d$. 

We show first that
\begin{equation}
\label{eq:Haus-N-Dens}
\Hausd^n(\chain_k(\Gamma)) \geq N_{2\eps}(\chain_k(\Gamma))\cdot \Dens_\eps(\chain_k(\Gamma)).
\end{equation}
Let $N\in\bN$ and suppose that a set $\{y_1, y_2, \ldots, y_N\}\subset \chain_k(\Gamma)$ satisfies $\inf_{i\ne \ell}d_{k,\infty}(y_i,y_\ell) \geq 2\eps$. Since the sets $D_{k,\infty}(y_i,\eps)$, for $i=1,\ldots, N$, are mutually disjoint, we have 
\begin{align*}
\Hausd^n(\chain_k(\Gamma)) &\ge \Hausd^n\bigl((\chain_k(\Gamma)) \cap \bigcup_{i=1}^N D_{k,\infty}(y_i, \eps)\bigr) \\
&=\sum_{i=1}^N\Hausd^n\bigl((\chain_k(\Gamma)) \cap D_{k,\infty}(y_i, \eps)\bigr)\\
&\geq \sum_{i=1}^N \Dens_\eps(\chain_k(\Gamma)) =  N\cdot \Dens_\eps(\chain_k(\Gamma)).
\end{align*}
Thus \eqref{eq:Haus-N-Dens} follows.

Having \eqref{eq:Haus-N-Dens} at our disposal, we observe that, for each $\eps>0$, 
\begin{align*}
\lov(\Gamma)
& = \limsup_{k \to \infty} \frac{1}{k}\log(\Hausd^n(\chain_k(\Gamma)))\\
&\geq \limsup_{k \to \infty} \frac{1}{k}\big(\log(N_{2\eps}(\chain_k(\Gamma)))
+ \log(\Dens_\eps(\chain_k(\Gamma)))\big)\\
&\geq \limsup_{k \to \infty} \frac{1}{k} \log(N_{2\eps}(\chain_k(\Gamma)))
+ \liminf_{k \to \infty} \frac{1}{k} \log(\Dens_\eps(\chain_k(\Gamma)))\\
&= h_{2\eps}(\Gamma) + \lodn_\eps(\Gamma).
\end{align*}
Thus, \eqref{eq:h-lov-lodn} holds.
\end{proof}

\begin{rem}
The use of the product Riemannian distance in the definition of the Hausdorff $n$-measure stems from Theorem \ref{thm:Gromov_interpretation}. The above considerations hold also for the Hausdorff measures based on the metrics $d_{k,\infty}$.
\end{rem}

\subsection{Kolmogorov--Sinai entropy}\label{sec:KS}

In this section, we recall the necessary prerequisites of measure-theoretic entropy. We focus on the approach to the subject using measurable partitions. For a more in-depth discussion of this approach, see e.g.\ Przytycki--Urba\'nski \cite[Chapter 2]{Przytycki-Urbanski_book} or Rokhlin \cite{Rokhlin_entropy}.

Let $(X,\Sigma,\mu)$ be a complete probability Lebesgue space; for a precise definition, see e.g.\ \cite[Section 2.6]{Przytycki-Urbanski_book}. Note that, for a complete separable metric space $X$ and a Borel $\sigma$-algebra $\cB_X$ in $X$, the completion $(X, \cB_X^*, \mu^*)$ of a probability space $(X,\cB_X, \mu)$ is a Lebesgue space; see e.g.\ \cite[\S2, No.\ 7]{Rokhlin_measuretheory}. As usual, we denote $(X,\Sigma,\mu)$ by $X$ for simplicity.

Let $P_X$ be the set of all partitions of $X$. For each $\xi\in P_X$, and $x \in X$, we denote by $\xi(x)$ the unique element of $\xi$ containing $x$. We say that a partition $\eta\in P_X$ \emph{refines} the partition $\xi\in P_X$ if $\eta(x) \subset \xi(x)$ for every $x \in X$. The refinement of partitions induces a partial order $\le$ to the set $P_X$ of all partitions by $\xi\le\eta$ if $\eta$ refines $\xi$.

Given a partition $\xi\in P_X$, we say that a subset $A\subset X$ is a \emph{$\xi$-subset} if $A$ is a finite union of elements of $\xi\in P_X$. A partition $\xi \in P_X$ is \emph{measurable} if there exists an at most countable collection $(B_\alpha)_{\alpha\in I}$ of measurable $\xi$-subsets in $X$ having the following property:
\begin{quote}
For any distinct $C,C'\in\xi$, there exists $\alpha \in I$ for which either
\begin{itemize}
\item $C\subset B_\alpha$ and $C'\cap B_\alpha = \emptyset$, or 
\item $C'\subset B_\alpha$ and $C\cap B_\alpha = \emptyset$.
\end{itemize}
\end{quote}

\emph{Rokhlin's disintegration theorem} states that, if $X$ is a Lebesgue probability space and $\xi \in P_X$ is measurable, there exists a collection $((C,\Sigma|_C,\mu_C))_{C\in\xi}$ of probability spaces satisfying the following conditions:
\begin{itemize}
	\item[(a)] $(\xi(x),\Sigma|_{\xi(x)},\mu_{\xi(x)})$ is a Lebesgue space for $\mu$-a.e.\ $x \in X$, and
	\item[(b)] for any non-negative $\Sigma$-measurable function $f \colon X \to [0, \infty]$, the restriction $f\vert_{\xi(x)}$ is $(\Sigma \vert_{\xi(x)})$-measurable for $\mu$-a.e.\ $x \in X$, the function $x \mapsto \int_{\xi(x)} f\vert_{\xi(x)} \dd \mu_{\xi(x)}$ is $\Sigma$-measurable, and
	\begin{gather*}
		\int_X f \dd \mu 
		= \int_{X} \biggl(\int_{\xi(x)} f\vert_{\xi(x)} \dd \mu_{\xi(x)}
			\biggr) \dd \mu(x).
	\end{gather*}
\end{itemize}
For details, see e.g.\ \cite[Theorem 6.2.7, Remark 6.2.10]{Przytycki-Urbanski_book} and the surrounding discussion, or \cite[\S3]{Rokhlin_measuretheory}. The collection $((C,\Sigma|_C,\mu_C))_{C\in\xi}$, or in short $(\mu_C)_{C\in\xi}$, is called a \emph{canonical system of probability measures} associated to the space $X$ and partition $\xi$. The system $(\mu_C)_{C\in\xi}$ is essentially unique, in the sense that if $(\nu_C)_{C\in\xi}$ is another canonical system of probability measures associates to $X$ and $\xi$, then $\mu_{\xi(x)} = \nu_{\xi(x)}$ for $\mu$-a.e.\ $x \in X$.
 
Let $\xi, \eta \in P_X$ be measurable partitions. The \emph{conditional information function} $I_\mu(\xi \vert \eta) \colon X \to [0, \infty]$ of $\xi$ with respect to $\eta$ is defined by
\[
	I_\mu(\xi \vert \eta)(x) = - \log (\mu_{\eta(x)} (\xi(x) \cap \eta(x)))
\]
for $\mu$-a.e.\ $x \in X$, where $(\mu_C)_{C\in\eta}$ is a canonical system of probability measures associated to $X$ and $\eta$. The function $I_\mu(\xi \vert \eta)$ is $\Sigma$-measurable, and defines the \emph{conditional entropy $H_\mu(\xi\vert \eta)$ of $\xi$ with respect to $\eta$} by 
\begin{gather}\label{eq:conditionalentropy}
	H_\mu(\xi \vert \eta) := \int_{X} I_\mu(\xi \vert \eta) \dd\mu.
\end{gather}
For details, see e.g.\ \cite[Definition 2.8.3.\ and (2.8.3)]{Przytycki-Urbanski_book}.

For a sequence of measurable partitions $\xi_i \in P_X$, let $\bigvee_{j = 1}^{\infty} \xi_j$ denote the least common refinement of the partitions $\xi_i$, that is, the least partition $\zeta \in P_X$ satisfying $\xi_j \leq \zeta$ for every $j \in \Z_+$. This partition exists, and is measurable; see e.g.\ the discussion in \cite[pp.\ 39--40]{Przytycki-Urbanski_book}. Now, the \emph{measure-theoretic} or \emph{Kolmogorov--Sinai entropy} $h_\mu(f)$ of a measure-preserving self-map $f$ on a complete probability Lebesgue space $(X,\Sigma,\mu)$ is defined by
\begin{align}
 h_\mu(f)
:=&\sup_{\xi\in P_X:\text{ measurable}} H_\mu\biggl(\xi\biggm|\bigvee_{j = 1}^{\infty} f^{-j} \xi\biggr),
\end{align}
where $f^{-j} \xi = \left\{ f^{-j} C : C \in \xi\right\}$. The Kolmogorov--Sinai entropy is already determined by finite partitions, that is, 
\begin{align}
h_\mu(f) =&\sup_{\xi\in P_X:\text{ finite and measurable}} H_\mu\biggl(\xi\biggm|\bigvee_{j = 1}^{\infty} f^{-j} \xi\biggr).\label{metentropyrephrase}
\end{align}
Recall that a partition $\xi\in P_X$ is \emph{finite} if it has finitely many elements. For more details, see e.g.\ \cite[\S7 and \S9]{Rokhlin_entropy}.

Finally, we briefly comment on entropy in the case that $(X, \Sigma, \mu)$ is not a complete Lebesgue space. In this case, we still obtain a canonical system $(\mu_C)_{C \in \xi}$ of probability measures if $\xi$ is a finite partition of $X$ into $\Sigma$-measurable sets. Hence, the Kolmogorov--Sinai entropy of a measure-preserving $f \colon X \to X$ can be defined by
\begin{align}\label{metentropyfinite}
	h_\mu(f) =&\sup_{\xi\in P_X:\text{ finite and measurable}} \lim_{k \to \infty } H_\mu\biggl(\xi\biggm|\bigvee_{j = 1}^{k} f^{-j} \xi\biggr),
\end{align}
where the $\bigvee_{j = 1}^{k}$-operator is defined similarly as its infinite counterpart. Indeed, the limit in \eqref{metentropyfinite} always exists, and the result is equivalent with \eqref{metentropyrephrase} for complete Lebesgue spaces $X$; see e.g.\ \cite[Section 2.4 and Theorem 2.8.6]{Przytycki-Urbanski_book}. Moreover, we note that if $(X, \Sigma, \mu)$ is a probability space with completion $(X, \Sigma^*, \mu^*)$ and $f \colon X \to X$ is a $\mu$-preserving transformation, then $h_{\mu}(f) = h_{\mu^*}(f)$.

\section{Proof of the lower bound $h_{\mu_f}(f)\ge\log\deg f$}
\label{sec:lower_bound}

In this section, we prove the entropy lower bound. We formulate this goal as a proposition.

\begin{prop}
\label{prop:entropy_lower_bound}
Let $f\colon M\to M$ be a uniformly quasiregular map of degree at least $2$ on a closed, oriented, and connected Riemannian $n$-manifold $M$ satisfying $H^*(M) \not\cong H^*(\bS^n)$. Then
\[
h_{\mu_f}(f) \ge \log \deg f.
\]
\end{prop}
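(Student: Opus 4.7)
The plan is to use a measure-theoretic Jacobian argument in the spirit of Rokhlin's formula, as credited to Ha\"issinsky in the introduction. First I would verify that the $f$-balanced measure $\mu_f$ is in fact $f$-invariant: the pointwise identity $f_*(\phi \circ f) \equiv (\deg f)\phi$ for $\phi \in C(M)$, which follows from \eqref{eq:pushforward_def} and $\sum_{z \in f^{-1}\{x\}}i(z,f) = \deg f$, combined with the balanced property $f^*\mu_f = (\deg f)\mu_f$, yields $\int \phi \circ f \, d\mu_f = \int \phi \, d\mu_f$. Next, setting $Z := f^{-1}(f(B_f))$ and noting $\mu_f(Z) = 0$ by Lemma \ref{lma:nobranchset}, I would identify the measure-theoretic Jacobian of $\mu_f$: for any Borel set $E \subset M \setminus Z$ on which $f|_E$ is injective, every $z \in E$ has $i(z,f) = 1$ and at most one preimage of any $x \in M$ lies in $E$, so $f_*\mathcal{X}_E = \mathcal{X}_{f(E)}$, and Lemma \ref{lma:pushfwd_set_measure} yields
\[
(\deg f)\mu_f(E) = f^*\mu_f(E) = \int_M f_*\mathcal{X}_E \, d\mu_f = \mu_f(f(E)).
\]

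The next step is to build a finite Borel partition $\xi = \{C_1, \ldots, C_N\}$ of $M$ such that $f|_{C_i}$ is injective for every $i$, modulo the $\mu_f$-null set $Z$. Following the argument in the proof of Lemma \ref{lma:pushfwd_set_measure}, one first obtains a countable Borel partition of $M \setminus Z$ into injective pieces by covering each level set $A_j = \{x : i(x,f) = j\}$ by countably many normal neighborhoods $U$ in which $f$ is injective on $A_j \cap U$; these can then be assembled into finitely many pieces by a Borel selection argument applied to the local homeomorphism $f \colon M \setminus Z \to M \setminus f(B_f)$, producing $\deg f$ Borel sections whose images partition $M \setminus Z$.

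Given $\xi$, I would set $\xi_k := \bigvee_{j=0}^{k-1} f^{-j}\xi$ and analyze its atoms $D(a_0,\ldots,a_{k-1}) := \bigcap_{j=0}^{k-1} f^{-j}C_{a_j}$ as follows: injectivity of $f$ on $C_{a_0}$ gives $f(D(a_0,\ldots,a_{k-1})) = f(C_{a_0}) \cap D(a_1,\ldots,a_{k-1})$, and the Jacobian identity yields $\mu_f(f(D(a_0,\ldots,a_{k-1}))) = (\deg f)\mu_f(D(a_0,\ldots,a_{k-1}))$. Iterating gives
\[
\mu_f(D(a_0, \ldots, a_{k-1})) \leq (\deg f)^{-(k-1)}\mu_f(C_{a_{k-1}}).
\]
Summing $-\mu_f(D)\log \mu_f(D)$ over atoms, and collapsing the inner sums via the $f$-invariance identity $\mu_f(f^{-(k-1)}C_{a_{k-1}}) = \mu_f(C_{a_{k-1}})$, yields
\[
H_{\mu_f}(\xi_k) \geq (k-1)\log\deg f + H_{\mu_f}(\xi).
\]
Dividing by $k$ and sending $k \to \infty$ gives $h_{\mu_f}(f,\xi) = \lim_k k^{-1} H_{\mu_f}(\xi_k) \geq \log\deg f$, whence $h_{\mu_f}(f) \geq \log\deg f$ via \eqref{metentropyfinite}.

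The main obstacle I foresee is the construction of the finite injective partition, since near a branch point the map $f$ is genuinely $i(x,f)$-to-one on every open neighborhood, and so no open neighborhood of a branch point can serve as an injective cell. This is overcome precisely by the cohomological hypothesis on $M$, which via Lemma \ref{lma:nobranchset} gives $\mu_f(B_f) = \mu_f(Z) = 0$: one can simply discard the branch set and work on $M \setminus Z$, where $f$ is a local homeomorphism and the measurable selection argument applies. Alternatively, it suffices to use a countable partition with $H_{\mu_f}(\xi) < \infty$, sidestepping the finiteness question entirely while leaving the remaining estimates unchanged.
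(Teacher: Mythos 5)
Your proposal is correct and uses the same key ingredients as the paper --- the identity $\mu_f(f(E)) = (\deg f)\,\mu_f(E)$ for Borel injectivity domains $E$ away from the branch set, the $\mu_f$-nullity of $B_f$ via Lemma \ref{lma:nobranchset}, and the $f$-invariance of $\mu_f$ --- but then takes a genuinely different route to the entropy bound. The paper uses the partition into singletons $\eps_M$ and reads off
\[
h_{\mu_f}(f) \;\geq\; H_{\mu_f}\bigl(\eps_M \,\big|\, f^{-1}\eps_M\bigr) \;=\; \int_M \log J_{f,\mu_f}\dd\mu_f
\]
in one stroke from the abstract Rokhlin entropy--Jacobian formula (citing Przytycki--Urba\'nski, Thm.\ 2.9.6), whereas you build a finite (or countable, finite-entropy) Borel partition $\xi$ into injectivity domains and redo the computation by hand on the atoms of $\bigvee_{j=0}^{k-1} f^{-j}\xi$. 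Your route is more self-contained and elementary --- it avoids the conditional-measure machinery behind the Rokhlin formula --- at the price of the measurable-selection argument needed to produce $\deg f$ Borel sections of the covering map $M\setminus f^{-1}(f(B_f)) \to M \setminus f(B_f)$. That step is correct in spirit but deserves more care than your sketch gives it (the normal-neighborhood covering yields only a \emph{countable} injective partition; grouping it into finitely many sections requires a Borel selection argument which is nontrivial, e.g.\ ordering fibers via a Borel embedding of $M$ into $[0,1]$); your fallback of using a countable partition with $H_{\mu_f}(\xi) < \infty$ is the cleanest way to sidestep this. One small point worth recording explicitly: in the iterated estimate $\mu_f(D(a_0,\dots,a_{k-1})) \le (\deg f)^{-(k-1)}\mu_f(C_{a_{k-1}})$, at each stage you apply the Jacobian identity to a set contained in some $C_{a_j}$, not to its forward image, so you never need the images $f(D)$ to avoid $Z$; the single subtraction of $Z$ at the level of the partition suffices. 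Both proofs also prove the stronger Theorem \ref{thm:main_result_v2} with essentially no modification, since the nullity of $B_f$ enters only at the very end.
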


Recall that, by the variational principle, we have that
\[
h(f) \ge \sup_{\mu} h_\mu(f)
\]
for the topological entropy $h(f)$ of $f$, where the supremum is over $f$-invariant Borel probability measures $\mu$. Thus, Proposition \ref{prop:entropy_lower_bound} yields the desired lower bound in Theorem \ref{thm:main_result}.

Moreover, recall that a function $J_{f, \mu} \colon M \to \R$ is a \emph{(strong) measure theoretic Jacobian of $f$} with respect to a (Borel or completed Borel) measure $\mu$ on $M$ if, for every $\mu$-measurable set $A \subset M$ for which $f \vert_A$ is injective, the set $f(A)$ is $\mu$-measurable and the integral transformation formula
\[
	\int_A J_{f, \mu} \dd \mu = \mu(f(A))
\]
holds. For further information on measure theoretic Jacobians, see \cite[Section 2.9]{Przytycki-Urbanski_book}.

We prove the entropy estimate $h_{\mu_f}(f) \geq \log \deg f$ using the following lemma.

\begin{lemma} \label{lma:uniform} 
Let $f\colon M\to M$ be a uniformly quasiregular map of degree at least $2$ on a closed, oriented, and connected Riemannian $n$-manifold $M$. Let $\mu$ be an $f$-balanced Borel probability measure on $M$. Then the function
\[
	J_{f, \mu}(x) = \frac{\deg f}{i(x,f)}
\] 
is a measure-theoretic Jacobian of $f$ with respect to $\mu$.
\end{lemma}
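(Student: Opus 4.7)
The plan is to exploit the $f$-balance identity $f^*\mu = (\deg f)\mu$ in combination with the pushforward integration formula of Lemma~\ref{lma:pushfwd_set_measure}, applied to a simple function that exactly cancels the local-index weights appearing in $f_*$. As a preliminary, I would record that $i(\cdot, f)$ is upper semicontinuous, integer-valued, and bounded between $1$ and $\deg f$, so its level sets $M_j := \{x \in M : i(x, f) = j\}$ partition $M$ into finitely many Borel pieces and $J_{f, \mu}$ is a bounded Borel function.

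The heart of the argument treats Borel sets $A$ with $f\vert_A$ injective, via matching evaluations of the integral $\int_M f_* g\,\dd\mu$ for the simple Borel function
\[
g := \frac{\mathcal{X}_A}{i(\cdot, f)} = \sum_{j = 1}^{\deg f} \frac{1}{j} \mathcal{X}_{A \cap M_j}.
\]
On one hand, injectivity of $f\vert_A$ forces the $i(z, f)$-weights in \eqref{eq:pushforward_def} to cancel pointwise, since each fiber meets $A$ in at most one point:
\[
(f_* g)(x) = \sum_{z \in f^{-1}\{x\} \cap A} i(z, f) \cdot \frac{1}{i(z, f)} = \#(f^{-1}\{x\} \cap A) = \mathcal{X}_{f(A)}(x).
\]
Integrating, and using that $f(A)$ is Borel by the continuous finite-to-one image argument already used in Lemma~\ref{lma:pushfwd_set_measure}, gives $\int_M f_* g\,\dd\mu = \mu(f(A))$. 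On the other hand, linearity together with Lemma~\ref{lma:pushfwd_set_measure} applied piece by piece, and the $f$-balance condition, give
\[
\int_M f_* g\,\dd\mu = \sum_{j = 1}^{\deg f} \frac{1}{j} f^*\mu(A \cap M_j) = \deg f \sum_{j = 1}^{\deg f} \frac{1}{j} \mu(A \cap M_j) = \int_A J_{f, \mu}\,\dd\mu.
\]
Equating the two evaluations yields the Jacobian identity for Borel $A$.

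The remaining step is to upgrade from Borel to $\mu$-measurable sets, which is the point at which I expect the main technical care to be required. For this I would take a Borel $B \subset A$ with $\mu(A \setminus B) = 0$, apply the Borel case to $B$ to obtain $\mu(f(B)) = \int_A J_{f, \mu}\,\dd\mu$, and show that $f(A) \setminus f(B) \subset f(A \setminus B)$ is contained in a $\mu$-null Borel set. To do so, I would enlarge $A \setminus B$ to a Borel $\mu$-null set $N$, partition $N$ into countably many Borel pieces $N_i$ on which $f$ is injective (exactly as in the covering construction in the proof of Lemma~\ref{lma:pushfwd_set_measure}), and apply the Borel case to each $N_i$ to conclude $\mu(f(N_i)) \le (\deg f)\mu(N_i) = 0$. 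Granted this, $f(A)$ is $\mu$-measurable with $\mu(f(A)) = \mu(f(B))$, and the Jacobian identity extends to the full $\mu$-measurable setting.
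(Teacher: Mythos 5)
Your proof is correct and takes essentially the same route as the paper: the same decomposition of $A$ according to the integer value of $i(\cdot, f)$, the same reliance on Lemma~\ref{lma:pushfwd_set_measure} and the $f$-balance identity, differing only in the cosmetic packaging as two evaluations of $\int_M f_* g\,\dd\mu$ rather than summing the identities $(\deg f)\mu(A_j) = j\,\mu(f(A_j))$ directly. Your final paragraph on passing from Borel to $\mu$-measurable sets is not needed for the lemma as stated (since $\mu$ is assumed Borel here), but it correctly anticipates the completion step that the paper defers to the proof of Proposition~\ref{prop:entropy_lower_bound}; there you could shorten your argument by observing $f_* \mathcal{X}_N \geq \mathcal{X}_{f(N)}$ for a Borel null set $N$, so $\mu(f(N)) \leq f^*\mu(N) = (\deg f)\mu(N) = 0$ without subdividing $N$ into injective pieces.
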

\begin{proof}
	Suppose that $A \subset M$ is Borel and that $f\vert_A$ is injective. We decompose $A$ into sets $A_1, \dots, A_{\deg f}$, where $i(x, f) = j$ for every $x \in A_j$. These sets are again Borel due to the upper semicontinuity of $i(\cdot, f)$; see eg.\ \cite[Proposition I.4.10]{Rickman}. Similarly, the sets $f(A_j)$ are Borel since $f$ is continuous and finite-to-one; see e.g.\ \cite[Theorem 4.12.4]{Srivastava_Borel}. Finally, since $f \vert_A$ is injective, the sets $f(A_j)$ are disjoint.
	
	Now, let $j \in \{1, \dots, \deg f\}$ and let $x \in M$. Note that, since $f$ is injective on $A_j$ and $i(\cdot, f) \equiv j$ on $A_j$, we have $f_* \mathcal{X}_{A_j} = j \mathcal{X}_{f(A_j)}$, where $\mathcal{X}_{A_j}$ again denotes the characteristic function of a set $E \subset M$. By using the $f$-balanced property of $\mu$ and Lemma \ref{lma:pushfwd_set_measure}, it follows that
	\[
	(\deg f)\mu(A_j)
	= \int_M (\deg f) \mathcal{X}_{A_j} \dd \mu
	= \int_M f_* \mathcal{X}_{A_j} \dd \mu
	= j \mu(f(A_j)).
	\]
	Finally, we conclude that
	\[
	\int_A \frac{\deg f}{i(x,f)} \dd \mu(x)
	= \sum_{j=1}^{\deg f} \frac{(\deg f)\mu(A_j)}{j}
	= \sum_{j=1}^{\deg f} \mu(f(A_j))
	= \mu(f(A)),
	\]
	and the claim therefore follows.
\end{proof}

Having Lemma \ref{lma:uniform} at our disposal, we may conclude the proof of Proposition \ref{prop:entropy_lower_bound} as follows.

\begin{proof}[Proof of Proposition \ref{prop:entropy_lower_bound}]
	For simplicity, we implicitly complete the measure $\mu_f$ throughout this proof, as the completed measure has the same entropy as the original. Let	
	\[
		\eps_M := \{\{x\} : x \in M\}\in P_M,
	\]
	be the partition of $M$ into points. The partition $\eps_M$ and the partitions $f^{-j} \eps_M$ are $\mu_f$-measurable for $j \in \N$. Moreover, we note that $f^{-j}\eps_M \le f^{-1}\eps_M$ for every $j \in \N$, and therefore 
	\begin{equation}\label{preimage}
		\bigvee_{j= 1}^\infty f^{-j} \eps_M = f^{-1}\eps_M.
	\end{equation}
	
	By Lemma \ref{lma:uniform}, we obtain a measure theoretic Jacobian of $f$ with respect to $\mu_f$, given by $J_{f, \mu_f}(x) = (\deg f)/i(x, f)$ for $x \in M$. We note that since $\mu_f$ is $f$-balanced, $f$ maps $\mu_f$-null Borel sets to $\mu_f$-null Borel sets by Lemma \ref{lma:pushfwd_set_measure}. Therefore, $J_{f, \mu_f}$ remains a measure-theoretic Jacobian for the completed measure. 
	
	By \cite[Theorem 2.9.6]{Przytycki-Urbanski_book}, we hence obtain
	\begin{align*}
		H_{\mu_f} (\eps_M \mid f^{-1} \eps_M)
		& = \int_M \log J_{f, \mu_f} \dd \mu_f
		= \int_M \left(\log (\deg f) - \log i(x, f)\right) \dd \mu_f(x).
	\end{align*}
	Moreover, we have $i(x, f) = 1$ for every $x \notin B_f$. As previously discussed, due to \cite[Theorem 1.2]{Kangasniemi}, our assumption that $M$ is not a rational cohomology sphere implies that $\mu_f(B_f) = 0$; see Lemma \ref{lma:nobranchset}. Hence, we obtain that
	\begin{align*}
	\int_M \left(\log (\deg f) - \log i(x, f)\right) \dd \mu_f(x)
	&= \int_M \log \deg f \dd \mu_f
	= \log \deg f.
	\end{align*}
	Thus, by \eqref{metentropyrephrase} and \eqref{preimage}, we have
	\begin{gather*}
	h_{\mu_f}(f) \ge H_{\mu_f}\biggl( \eps_M \biggm \vert \bigvee_{j=1}^\infty f^{-1}\eps_M\biggr) =  H_{\mu_f}(\eps \mid f^{-1} \eps)=\log \deg f.
	\end{gather*}
\end{proof}

Moreover, we note that the only properties of $\mu_f$ we used in the above proof are that $\mu_f$ is an $f$-balanced Borel probability measure and $\mu_f(B_f) = 0$, and the latter assumption was only used to conclude that $\log i(\cdot, f)$ vanishes $\mu_f$-a.e. Hence, given the result of Lemma \ref{lma:uniform}, the above proof also yields the lower bound part of Theorem \ref{thm:main_result_v2}.

\section{Preliminaries on currents}\label{sect:Federer_Fleming}

We move now to the discussion of Gromov's argument on the upper bound $h(f) \le \log \deg f$ of the topological entropy. As a technical tool in the proof, we use Federer--Fleming currents and we recall some basic results in this section. We refer to Federer \cite[Chapter 4]{Federer} for details.

\subsection{Currents}
Let $U \subset \R^n$ be open, and for each $m\in\{0,1,\ldots,n\}$,
let $C^\infty_0(\wedge^m U)$ be the space of all differential $m$-forms on $U$
having coefficients in $C^\infty_0(U)$.
An \emph{$m$-current} on $U$ is an $\bR$-linear functional 
$T$ on $C^\infty_0(\wedge^m U)$ which is continuous in the sense of distributions. The space of all $m$-currents on $U$ is denoted by $\curr_m(U)$. We give $\curr_m(U)$ the topology of pointwise convergence.

The \emph{support} of a current $T \in \curr_m(U)$ is
\[
\spt T := U \setminus \cup \left\{ V \subset U: \text{ open, and } 
T(\omega) = 0\text{ for any }\omega \in C^\infty_0(\wedge^m V) \right\},
\]
and the \emph{boundary} $\partial_U T \in \curr_{m-1}(U)$ 
of an $m$-current $T \in \curr_m(U)$ is the $(m-1)$-current defined by
\[
\partial_U T(\omega) = T(d\omega)\quad \text{for each } \omega \in C^\infty_0(\wedge^{m-1} U).
\]
Thus $\partial_U \partial_U T = 0$ for any $T \in \curr_m(U)$. 
For each $c \in \R$, the multiplication $c T$ is defined in the obvious manner. Furthermore, for each $l$-form $\tau \in C^\infty(\wedge^l U)$ for $l\in\{0,\ldots,m\}$, the \emph{interior multiplication} $T \llcorner \tau \in \curr_{m-l}(U)$ is the current defined by $(T \llcorner \tau)(\omega) = T(\tau \wedge \omega)$ for each $\omega \in C^\infty_0(\wedge^{m-l} U)$.

\subsection{The mass of currents, normal currents, and integral representations}

Let $W$ be an $n$-dimensional $\bR$-vector space having
an inner product $\ip{\cdot}{\cdot}$. For each $m\in\{1,\ldots,n\}$,
the $m$-th exterior product space (the $m$-vector space) 
$\wedge^m W$ of $W$ is equipped with the Grassmann inner product
 \[
 \ip{v_1 \wedge \cdots \wedge v_m}{w_1 \wedge \cdots \wedge w_m} 
 = \det \left(\ip{v_i}{w_j}\right)_{i,j}\quad \text{for } v_i, w_j \in W. 
 \]
We denote the induced norm on $\wedge^m W$ by $|\cdot|$. 
The $m$-covector space $\wedge^m W^*$ of $W$ also has a Grassmannian inner product and a norm induced by the duality isomorphism $W \to W^*$ given by $v \mapsto (w \mapsto \ip{v}{w})$ for $v, w \in W$. 
 
 The \emph{comass} $\|\xi\|_{\mass}$ 
 of an $m$-covector $\xi \in \wedge^m W^*$ is defined by
 \[
 \norm{\xi}_{\mass} 
 := \sup \{|\xi(w)|: w \in \wedge^m W \text{ is simple}, |w| \leq 1 \},
 \]
 where we say an $m$-vector $w \in \wedge^m W$ is {\em simple} 
if it can be written as $w = w_1\wedge \cdots \wedge w_m$. Similarly, the \emph{mass} $\norm{w}_{\mass}$ of 
 an $m$-vector $w \in \wedge^m W$ is defined by
 \[
 \norm{w}_{\mass}:= \sup\{|\xi(w)|: \xi \in \wedge^m W^*, \norm{\xi}_{\mass}\leq 1 \}.
 \]
These are norms on $\wedge^m W^*$ and $\wedge^m W$ satisfying
$|\xi| \geq \norm{\xi}_{\mass}$ for any $\xi \in \wedge^m W^*$  
and $|w| \leq \norm{w}_{\mass}$
for any $w \in \wedge^m W$, respectively. 
For more details, see \cite[Section 1.8]{Federer}.

Let $U$ be an open set in $\bR^n$ and $m\in\{0,1,\ldots,n\}$.
For each open subset $V\subset U$,
the \emph{mass of an $m$-current $T \in \curr_m(U)$ over $V$} is defined by
\[
\mass_V(T) := \sup \Bigl\{ |T(\omega)| :
		\omega \in C^\infty_0(\wedge^m V), 
		\sup_{x \in V} \norm{\omega_x}_{\mass} \leq 1 \Bigr\},	
\]
where 
$C^\infty_0(\wedge^m V)$ is embedded in $C^\infty_0(\wedge^m U)$ by means of 
zero extension on $U\setminus V$. 
An $m$-current $T \in \curr_m(U)$ is said to be \emph{normal} if 
\[
\spt T\Subset U\quad\text{and}\quad
\mass_U(T) + \mass_U(\partial_U T) < \infty;
\]
here, and in what follows, we denote $A\Subset B$ if $A$ is a subset compactly contained in $B$.

An $m$-current $T \in \curr_m(U)$ is \emph{locally normal} if 
$\mass_V(T) + \mass_V(\partial_U T) < \infty$
for any open subset $V\Subset U$.
Let $\ncurr_m(U)$ (resp.\ $\ncurr_m^\loc(U)$) be
the space of all normal (resp.\ locally normal) $m$-currents on $U$. 

Currents of finite mass admit an integral representation.

\begin{lemma}\label{lem:mass_integral_representation}
For every $T \in \curr_m(U)$ satisfying $\mass_U(T)<\infty$, 
there exist a measurable tangent $m$-vector field $\vec{T}$ on $U$ 
and a Radon measure $\mu_T$ on $U$ such that
for every $\omega \in C_0^\infty(\wedge^m U)$,
\begin{gather}
  T(\omega) = \int_{U} \langle\omega,\vec{T}\rangle\dd \mu_T.\label{eq:integral}
\end{gather}
Moreover, $\mu_T(V) = \mass_{V}(T)$ for any open subset $V \subset U$.
\end{lemma}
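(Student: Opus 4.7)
The plan is to first construct $\mu_T$ as a Radon measure from the mass functional on open sets, and then realize $T$ as integration against a bounded $\mu_T$-measurable $m$-vector field by a vector-valued Riesz representation argument.

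For the construction of $\mu_T$, I would set $\mu_T(V) := \mass_V(T)$ for open $V \subset U$ and extend it to arbitrary subsets by the Carath\'eodory Method II construction, obtaining a Borel regular outer measure. Monotonicity is immediate from the zero-extension convention in the definition of $\mass_V(T)$. The key step is countable subadditivity on open sets: given open sets $V_j$ covering an open set $V$ and a form $\omega \in C_0^\infty(\wedge^m V)$ with $\sup_x \|\omega_x\|_{\mass} \leq 1$, a smooth partition of unity $(\varphi_j)$ subordinate to $(V_j)$ decomposes $\omega = \sum_j \varphi_j \omega$ into a sum which is finite on $\spt \omega$, and hence $|T(\omega)| \leq \sum_j \mass_{V_j}(T)$ by the definition of mass applied to each summand. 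Taking the supremum over $\omega$ yields subadditivity. Since $\mass_V(T) \leq \mass_U(T) < \infty$, the resulting outer measure is finite on compact subsets of $U$, hence Radon.

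Once $\mu_T$ is in hand, I would establish the comass--integral bound $|T(\omega)| \leq \int_U \|\omega_x\|_{\mass}\dd \mu_T(x)$ for every $\omega \in C_0^\infty(\wedge^m U)$, by partitioning $\spt \omega$ into small sets on which $\|\omega_x\|_{\mass}$ is nearly constant and applying the definition of mass on each piece together with the already-established measure $\mu_T$. This inequality lets $T$ extend continuously to the closure of $C_0^\infty(\wedge^m U)$ inside the Banach space $L^1\bigl(\mu_T; \wedge^m(\R^n)^*\bigr)$ with pointwise-comass norm, a closure containing $C_c(\wedge^m U)$ by standard mollification.

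Finally, I would invoke a vector-valued Riesz representation theorem on this extended functional. Operationally, fixing a basis $(e_I)$ of $\wedge^m \R^n$ with dual basis $(e^I)$, each scalar functional $\varphi \mapsto T(\varphi\, e^I)$ on $C_0^\infty(U)$ is dominated by a multiple of $\int_U |\varphi|\dd\mu_T$, so the Radon--Nikodym theorem furnishes a bounded $\mu_T$-measurable function $T_I$ representing it. Setting $\vec{T} := \sum_I T_I\, e_I$ and unpacking yields \eqref{eq:integral} on $C_0^\infty(\wedge^m U)$ and then on all of $C_c(\wedge^m U)$ by density. The identity $\mu_T(V) = \mass_V(T)$ on open $V$ is built into the construction of $\mu_T$. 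The principal technical obstacle is ensuring that the componentwise Radon--Nikodym derivatives assemble into a vector field whose pointwise comass satisfies $\|\vec{T}_x\|_{\mass} \leq 1$ for $\mu_T$-a.e.\ $x$, rather than merely a crude componentwise bound; this is handled by testing the scalar inequality $|T(\omega)| \leq \int_U \|\omega_x\|_{\mass}\dd\mu_T$ against forms that locally saturate the comass pairing with $\vec{T}$ and then passing to the limit via the Lebesgue--Besicovitch differentiation theorem for Radon measures, which simultaneously reconfirms $\mu_T(V) = \mass_V(T)$ as an equality.
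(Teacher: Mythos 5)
The paper offers no proof of its own for this lemma, simply citing Federer [Sections 4.1.5 and 4.1.7]; your sketch follows that same standard line (build the variation measure from the mass functional, bound $T$ by the comass integral, component‑wise Riesz/Radon--Nikodym to obtain $\vec{T}$, Lebesgue--Besicovitch differentiation to normalize $\norm{\vec{T}_x}_{\mass}=1$ for $\mu_T$-a.e.\ $x$) and is correct. One small nit: extending $V\mapsto\mass_V(T)$ from open sets to a Borel regular outer measure is Carath\'eodory's Method~I (direct outer regularization by open supersets), not Method~II (the $\delta$-indexed construction used for Hausdorff measures); this does not affect the argument.
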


Let $T\in \curr_m(U)$ be a current of finite mass on an open set $U\subset \R^n$ and let $\mu_T$ be a Radon measure and $\vec{T}$ an $m$-vector field representing $T$ as in \eqref{eq:integral}.
Thus, for an open set $V\subset U$, we may define the $m$-current
$T \llcorner V=T\llcorner\chi_V \in \curr_m(U)$ by
\begin{gather}\label{eq:restriction} 
 (T \llcorner V) (\omega) = \int_{V} \langle\omega,\vec{T}\rangle\dd \mu_T
=\int_U\chi_V\cdot\langle\omega,\vec{T}\rangle\dd\mu_T
\end{gather}
for each $\omega \in C_0^\infty(\wedge^m U)$, where $\chi_V$ is the characteristic function of $V$ on $U$. 
Moreover, 
\begin{equation}\label{eq:mass_restriction_interplay}
	\mass_{V}(T) = \mu_T(V) = (\mu_T|V)(V) = (\mu_T|V)(U)
	= \mass_{U} (T \llcorner V).
\end{equation}
For further details, we refer to \cite[Sections 4.1.5 and 4.1.7]{Federer}

\subsection{Push-forward of currents}\label{subsect:pushforward_of_currents}

Let $U \subset \R^{n_1}$ and $V \subset \R^{n_2}$ be open, $T \in \curr_m(U)$, and 
let $h \colon U \to V$ be a smooth map such that the restriction 
$h|\spt T:\spt T\to V$ is proper; note that, if $\spt T\Subset U$, then $h \vert_{\spt T}$ is proper. The \emph{push-forward $h_*T$ of $T$ under the map $h$} is the $m$-current $h_*T\in \curr_m(V)$ defined as follows. 
For every $\omega \in C_0^\infty(\wedge^m V)$, let $\psi \in C_0^\infty(U)$ be a function satisfying $\psi\equiv 1$ on some open neighborhood of $(\spt T)\cap(h^{-1} \spt \omega)$, and set
\[
(h_*T)(\omega) = T(\psi\cdot h^* \omega).
\] 
The values of $h_*T$ are independent on the choice of $\psi$.

Since $h^*d\omega = dh^* \omega$ for any $\omega \in C^\infty(\wedge^m V)$, we have
\[
h_*\partial_U T = \partial_V h_*T\quad \text{for each }T \in \curr_m(U). 
\]
If in addition $h \vert_{\spt T}$ is $L$-Lipschitz for $L\ge 1$, then
for any $T \in \curr_m(U)$,
\[
\mass_V(h_*T) \leq L^m \mass_U(T).
\]
For more details, we refer to e.g.\ \cite[sections 4.1.7 and 4.1.14]{Federer}.

\subsection{Slicing of currents}\label{subsect:slicing}

Let $U \subset \R^n$ be an open set, $T \in \curr_m(U)$ an $m$-current satisfying $\mass_U(T)+\mass_U(\partial_U T) < \infty$, and let $h \colon U \to \R$ be an $L$-Lipschitz function for $L\ge 1$. 
For each $t\in\bR$, we set
\begin{gather*}
 U_{h,t} := h^{-1} (-\infty, t)\subset U, 
\end{gather*}
which is open, and the \emph{slice of $T$ by $h$ at $t$} is
\[
\langle T, h, t- \rangle := \partial_U (T \llcorner U_{h,t}) 
		- (\partial_U T) \llcorner U_{h,t}\in \curr_{m-1}(U).
\]

The following lemma gathers the key properties of the slices of currents
used in the forthcoming discussion.
The argument in the proof is
similar to that in \cite[Section 4.2.1]{Federer} and we omit the details.

\begin{prop}\label{lem:properties_of_slices}
Let $U \subset \R^n$ be open, let $h$ be an $L$-Lipschitz 
function on $U$, $L\ge 1$, and let $(a,b)\subset\bR$. If 
$\emptyset\neq U_{h,t} \Subset U$ for every $t \in (a,b)$, then 
for every $T \in \curr_m(U)$ satisfying 
$\mass_U(T)+\mass_U(\partial_U T) < \infty$, 
 \begin{enumerate}
  \item \label{enum:slice_prop_2} $\langle T, h, t- \rangle \in \ncurr_{m-1}(U)$ for 
  Lebesgue a.e.\ $t \in (a,b)$, and
  \item \label{enum:slice_prop_1} 
the function $t \mapsto \mass_{U} (\langle T, h, t- \rangle)$ on $(a,b)$
is lower semicontinuous, and
	\[
	\mass_{U_{h,t}}(T)  
	\geq \frac{1}{mL} \int_a^t \mass_{U} (\langle T, h, s- \rangle) \dd s
	\quad\text{for } t \in (a,b).
	\]
 \end{enumerate}
\end{prop}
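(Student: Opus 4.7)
The plan is a coarea-type slicing argument, modeled on \cite[Section 4.2.1]{Federer}. My starting point is a Lipschitz cutoff representation of the slice. For $t \in (a,b)$ and small $\eps > 0$, I would use
\[
\rho_{t,\eps}(x) := \min\bigl\{1,\max\{0,(t-h(x))/\eps\}\bigr\},
\]
which equals $1$ on $U_{h,t-\eps}$, vanishes on $U \setminus U_{h,t}$, is compactly supported in $U$ because $U_{h,t}\Subset U$, and satisfies $|d\rho_{t,\eps}|\leq L/\eps$ with $d\rho_{t,\eps}$ concentrated on the slab $U_{h,t}\setminus U_{h,t-\eps}$. For $\omega \in C_0^\infty(\wedge^{m-1}U)$, the Leibniz rule together with the defining identity $T(d(\rho_{t,\eps}\omega)) = (\partial_U T)(\rho_{t,\eps}\omega)$ (valid for the merely Lipschitz $\rho_{t,\eps}$ via Lemma \ref{lem:mass_integral_representation} after a mollification) gives
\[
T(d\rho_{t,\eps}\wedge\omega) = (\partial_U T)(\rho_{t,\eps}\,\omega) - T(\rho_{t,\eps}\,d\omega).
\]
Passing to the limit $\eps \to 0^+$ by dominated convergence on the finite Radon measures $\mu_T$ and $\mu_{\partial_U T}$, and noting $\rho_{t,\eps} \to \chi_{U_{h,t}}$ pointwise, the right-hand side converges to $((\partial_U T)\llcorner U_{h,t})(\omega) - (T\llcorner U_{h,t})(d\omega) = -\langle T,h,t-\rangle(\omega)$, yielding the key identity
\[
\langle T,h,t-\rangle(\omega) = -\lim_{\eps \to 0^+} T(d\rho_{t,\eps}\wedge\omega).
\]

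With this identity in hand, the integral bound in (ii) follows by a standard comass estimate. Using $\|d\rho_{t,\eps}\wedge\omega\|_{\mass} \leq C_m (L/\eps)\sup_{x\in U}\|\omega_x\|_{\mass}$ for a constant $C_m$ depending only on $m$ (from the combinatorial structure of wedge products with a $1$-covector; see \cite[Section 1.8]{Federer}), combined with the integral representation of $T$, and taking the supremum over $\omega$ with $\sup_{x\in U}\|\omega_x\|_{\mass}\leq 1$, I obtain the pointwise bound
\[
\mass_U(\langle T,h,t-\rangle) \le \liminf_{\eps\to 0^+} \frac{C_m L}{\eps}\bigl[\phi(t)-\phi(t-\eps)\bigr],
\]
where $\phi(t) := \mass_{U_{h,t}}(T) = \mu_T(U_{h,t})$ is nondecreasing in $t$ and bounded by $\mass_U(T)<\infty$. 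Integrating this inequality in $t$ via Fatou's lemma together with the elementary estimate $\int_a^t \phi'(s)\,ds \le \phi(t)-\phi(a^+)$ for monotone $\phi$ yields the claimed integral inequality (with $C_m = m$ recovering the stated constant).

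Part (i) then comes out as a corollary: the integral estimate forces $\mass_U(\langle T,h,t-\rangle) < \infty$ for Lebesgue a.e.\ $t \in (a,b)$; the identity $\partial_U \langle T,h,t-\rangle = -\langle \partial_U T, h, t-\rangle$ (a direct consequence of $\partial_U^2 = 0$ applied to the definition of the slice) lets me apply the same estimate to the finite-mass current $\partial_U T$ to obtain $\mass_U(\partial_U\langle T,h,t-\rangle) < \infty$ a.e.; and $\spt \langle T,h,t-\rangle \subset \overline{U_{h,t}}\Subset U$ gives the required compact support. Hence $\langle T,h,t-\rangle \in \ncurr_{m-1}(U)$ for a.e.\ $t$.

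For the lower semicontinuity claim in (ii), I would note that for each fixed $\omega$ with $\sup_{x\in U}\|\omega_x\|_{\mass} \leq 1$, the scalar function $t \mapsto \langle T,h,t-\rangle(\omega)$ is left-continuous by dominated convergence on the finite measures $\mu_T, \mu_{\partial_U T}$, using $U_{h,s}\nearrow U_{h,t}$ as $s\nearrow t$. Writing $\mass_U(\langle T,h,t-\rangle) = \sup_\omega \langle T,h,t-\rangle(\omega)$ and using $\liminf_{s\to t}\sup_\omega F_\omega(s) \ge \sup_\omega \liminf_{s\to t} F_\omega(s)$ then gives the lower semicontinuity in the sense of the Federer framework; see \cite[Section 4.2.1]{Federer} for the careful treatment. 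The main technical obstacle in the whole proof is the rigorous justification of the limit identity in the first paragraph, since $\rho_{t,\eps}$ is only Lipschitz; this is handled by a convolution smoothing together with the integral representation from Lemma \ref{lem:mass_integral_representation} and dominated convergence, while the remaining manipulations with the monotone function $\phi$ are standard real-variable arguments.
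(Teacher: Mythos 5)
Your argument follows the same route the paper points to, namely Federer's slicing machinery in \cite[Section 4.2.1]{Federer}, which the paper cites without supplying details; the cutoff construction, the extension of the identity $T(d\eta)=(\partial_U T)(\eta)$ to Lipschitz forms via mollification and the finite-mass integral representation, the limit as $\eps\to 0^+$ by dominated convergence, the comass bound on $d\rho_{t,\eps}\wedge\omega$, the Fatou/monotone-function step, and the derivation of (i) from (ii) via $\partial_U\langle T,h,t-\rangle=-\langle\partial_U T,h,t-\rangle$ are all standard and correct, and the crude bound $\|\alpha\wedge\beta\|_{\mass}\le m\,|\alpha|\,\|\beta\|_{\mass}$ does recover the paper's non-sharp constant $1/(mL)$. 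This part of the proposal is sound.

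The one genuine gap is the lower semicontinuity claim. You show that for each fixed test form $\omega$ the map $t\mapsto\langle T,h,t-\rangle(\omega)$ is \emph{left}-continuous (since $U_{h,s}\nearrow U_{h,t}$ as $s\nearrow t$), and then pass to the supremum in $\omega$ using $\liminf_{s\to t}\sup_\omega F_\omega(s)\ge\sup_\omega\liminf_{s\to t}F_\omega(s)$. But the right-hand side controls $\mass_U(\langle T,h,t-\rangle)$ only if $\liminf_{s\to t}F_\omega(s)\ge F_\omega(t)$ for each $\omega$, and left-continuity alone does \emph{not} give this for $s\searrow t$: as $s\searrow t$ one has $U_{h,s}\searrow\{h\le t\}$, so $F_\omega(s)$ converges to the value obtained by restricting to $\{h\le t\}$, which can be strictly smaller than $F_\omega(t)$ if $\mu_{\partial_U T}$ charges the level set $\{h=t\}$. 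In fact the mass function is in general \emph{not} lower semicontinuous as stated: for $U=(-1,2)$, $h=\id$, and $T=\mathcal L^1\llcorner(0,1)\llcorner\frac{\partial}{\partial x}$, one computes $\mass_U(\langle T,h,t-\rangle)=1$ for $t\in(0,1]$ and $=0$ for $t\in(1,2)$, which fails lower semicontinuity at $t=1$. So your argument actually proves the (correct) property of lower semicontinuity from the left, hence Borel measurability of $t\mapsto\mass_U(\langle T,h,t-\rangle)$, which is exactly what is needed for the integral in (ii) to make sense and is all that the downstream lemmas use; but you should state precisely which semicontinuity you are establishing rather than matching the nominal wording of the proposition, which here is imprecise.
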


\section{The Ahlfors regularity of images in Euclidean spaces}\label{sec:Ahlfors_euclidean}

As mentioned in the introduction, the upper bound for the entropy $h(f)$ follows from an application of the uniform Ahlfors regularity estimate in Theorem \ref{thm:Gromov_interpretation} to the images of maps $(\id, f, \ldots, f^k) \colon M\to M^{k+1}$. We begin by proving a Euclidean counterpart of Theorem \ref{thm:Gromov_interpretation}. For the statement, given $\Gamma \subset (\R^n)^k$, we denote
\[
\Gamma_{y,r} = B^{kn}(y,r) \cap \Gamma
\]
for $y\in \R^{nk}$ and $r>0$.

\begin{prop}\label{prop:Gromov_interpretation_Euclidean}
Let $\Omega\subset\bR^n$ be an open subset for $n \geq 2$,
$k\in\bN$, and let $f_1, \ldots, f_k \colon \Omega \to \R^n$ 
be non-constant $K$-quasiregular maps for some $K\ge 1$
such that $\max_{j\in\{1,\ldots,k\}}N(f_j) < \infty$. 
Let $g:= (f_1, \ldots, f_k) \colon \Omega \to (\bR^n)^k=\R^{kn}$ and 
$\Gamma=\Gamma_g:=g(\Omega)\subset\R^{kn}$.
Then there exists a constant $C=C(n)>0$, depending only on $n$, having the property that, for each $y \in \Gamma$ and any $r>0$ satisfying
$g^{-1}(\Gamma_{y,r})\Subset\Omega$, we have
\begin{equation}
\label{eq:Ahlfors}
 \frac{1}{Ck^\frac{n(n-1)}{2} K^{n-1} \left(\min_j N(f_j)\right)^{n}}
\le\frac{\cH^n(\Gamma_{y,r})}{r^n}
\leq Ck^\frac{n}{2} K\max_j N(f_j).
\end{equation}
\end{prop}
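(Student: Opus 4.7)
The plan is to handle the upper and lower bounds of \eqref{eq:Ahlfors} separately.

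\emph{Upper bound.} I would apply the area formula for Sobolev mappings directly to $g$. Since each $f_j$ is quasiregular and satisfies the Lusin $(N)$-condition, so does $g \in W^{1,n}_\loc(\Omega, \R^{kn})$, and hence
\[
\cH^n(\Gamma_{y,r}) \le \int_{g^{-1}(\Gamma_{y,r})} J_g(x)\,\dd x,
\]
where $J_g := |\partial_1 g \wedge \cdots \wedge \partial_n g|$. Writing $Dg^T Dg = \sum_j Df_j^T Df_j$ and applying Hadamard's inequality, together with the distortion bound $\norm{Df_j}^n \le K J_{f_j}$, should yield a pointwise estimate of the form $J_g \le k^{n/2-1} K \sum_j J_{f_j}$. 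Since $f_j(g^{-1}(\Gamma_{y,r})) \subset B(y_j, r)$, the area formula applied to each $f_j$ gives $\int_{g^{-1}(\Gamma_{y,r})} J_{f_j} \le N(f_j)\,\omega_n r^n$. Summing over $j$ and using $\sum_j N(f_j) \le k \max_j N(f_j)$ absorbs the extra factor of $k$ into $k^{n/2}$ and produces the claimed upper bound.

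\emph{Lower bound.} This is the delicate part, and I would adapt Gromov's slicing argument. Associate to $g$ the $n$-current $T := g_*[\![\Omega]\!] \in \curr_n(\R^{kn})$; the hypothesis $g^{-1}(\Gamma_{y,r}) \Subset \Omega$ guarantees that for each $t \in (0,r]$ the restriction $T \llcorner B(y,t)$ is locally normal, and for a.e.\ such $t$ its boundary coincides with the slice $\langle T, h, t-\rangle$, where $h(z) = |z-y|$. Setting $V(t) := \mass(T \llcorner B(y,t))$, Proposition \ref{lem:properties_of_slices}\eqref{enum:slice_prop_1} gives $V'(t) \ge n^{-1} \mass(\langle T, h, t-\rangle)$ for a.e.\ $t$. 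The complementary ingredient I would use is an isoperimetric/filling estimate: the cone from $y$ over the $(n-1)$-cycle $\langle T, h, t-\rangle$ supplies a filling of mass at most $(t/n)\mass(\langle T, h, t-\rangle)$, and a comparison using the quasi-minimizing behaviour of quasiregular graph currents (inherited from the same $\norm{Dg}^n \lesssim K J_g$-type bounds used above) should upgrade this to an inequality of the form $V(t)^{(n-1)/n} \le C(n,k,K)\mass(\langle T, h, t-\rangle)$. Combining with the slicing estimate produces a Gr\"onwall-type differential inequality whose integration gives $V(r) \ge c(n,k,K)\, r^n$. The final step is to translate current mass to Hausdorff measure: the area formula yields $V(r) \le N(g)\,\cH^n(\Gamma_{y,r}) \le \min_j N(f_j)\,\cH^n(\Gamma_{y,r})$, which produces the stated lower bound.

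\emph{Main obstacle.} The critical step is establishing the isoperimetric inequality $V(t)^{(n-1)/n} \le C(n,k,K)\mass(\langle T, h, t-\rangle)$ with precisely the quantitative dependence on $k$ and $K$ that, after integration, reproduces the claimed factors $k^{n(n-1)/2}$ and $K^{n-1}$ in the denominator. Converting the pointwise $K$-distortion into a quantitative quasi-minimality statement for $T$, and tracking how each slicing/filling step accumulates factors of $k^{(n-1)/2}$ and $K$, is where the technical work of the proof concentrates.
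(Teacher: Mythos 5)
Your upper bound matches the paper's argument essentially verbatim: Sobolev area formula, Hadamard together with the distortion bound for the pointwise $n$-Jacobian estimate $|J_g| \le n^{n/2} K k^{n/2-1}\sum_j J_{f_j}$, and change of variables for each $f_j$ to land in $N(f_j)\omega_n r^n$. No issues there.

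For the lower bound, you correctly identify the overall architecture (graph current, radial slicing via Proposition~\ref{lem:properties_of_slices}, an isoperimetric input for the slice boundary, and a Bihari/Gr\"onwall-type integral inequality to close the loop), and the final conversion $V(r) \le \min_j N(f_j)\,\cH^n(\Gamma_{y,r})$ is exactly Lemma~\ref{lem:well_defined_current}. The genuine gap is in the isoperimetric step, which you flag as the main obstacle but do not actually resolve. You propose to compare $V(t)$ against the cone over the slice and to invoke ``quasi-minimizing behaviour of quasiregular graph currents,'' but the current $T = g_*[\![\Omega]\!]$ lives in codimension $(k-1)n \ge 0$, and in positive codimension the isoperimetric/filling estimates of Federer--Fleming require rectifiability or integrality; a pushforward of $[\![\Omega]\!]$ by a merely Sobolev map is not a priori a rectifiable or integral current, and no quasi-minimality for it (with controlled dependence on $k$ and $K$) is established. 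In fact, bounded-density/quasi-minimality statements for such graph currents are, in spirit, the conclusion you want, so the argument as sketched is circular.

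The paper avoids this entirely by \emph{projecting to codimension zero} before invoking an isoperimetric inequality: it shows (Lemma~\ref{lem:push_isoperimetric_estimate}) that
\[
\mass_{\R^{kn}}([\Gamma_{y,t}]) \le n^{n/2}Kk^{n/2-1}\Bigl(\min_j N(f_j)\Bigr)\sum_{j}\mass_{\R^n}\bigl((\proj_j)_*[\Gamma_{y,t}]\bigr),
\]
and that the projected current $(\proj_j)_*[\Gamma_{y,t}]$ equals $\mathcal{L}^n \llcorner u_t$ for a compactly supported integer-valued $u_t = N(f_j,\cdot,\Omega_{y,t})\chi_{f_j(\Omega_{y,t})}$, which puts it squarely inside the hypotheses of Federer's isoperimetric inequality \cite[4.5.9(31)]{Federer} for top-dimensional currents; the constants $k^{n/2}$ and $K$ enter transparently through the first display, and no rectifiability or minimality claim about the graph current itself is needed. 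If you want to pursue the cone/monotonicity route, you would need an independent quantitative excess or quasi-minimality bound for $g_*[\![\Omega]\!]$ with explicit $(k,K)$-dependence, and that is precisely what is missing.
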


We prove Proposition \ref{prop:Gromov_interpretation_Euclidean} following Gromov's argument in \cite{Gromov-2003}.
For the rest of this section, let $g\colon \Omega \to \R^{kn}$ be a map as in 
Proposition \ref{prop:Gromov_interpretation_Euclidean}.
The map $g \colon \Omega \to \R^{nk}$ is continuous and 
in $W^{1,n}_\loc(\Omega,\bR^{nk})$. 
As previously, we set
\begin{gather*}
 N(g, y, A):=\#(g^{-1}\{y\}\cap A)
\end{gather*}
for each $y\in\Gamma$ and each $A\subset\Omega$, 
$N(g,y):=N(g,y,\Omega)$ for each $y\in\Gamma$, and 
\[
N(g):=\sup_{y\in\Gamma}N(g,y)\le\min_{j\in\{1,\ldots,k\}}N(f_j)<\infty.
\]

For each $j\in\{1,\ldots,k\}$, 
let $\proj_j\colon (\R^n)^k \to \R^n$ be the $j$-th projection $(z_1,\ldots, z_k) \mapsto z_j$. Then ${\proj_j}\circ g = f_j$.

We define a measurable function $|J_g|$ on $\Omega$ by
\begin{gather*}
 |J_g|(x)=|(Dg(x) e^1_{x}) \wedge \cdots \wedge (Dg(x)e^n_{x})|\quad
\text{for Lebesgue a.e.\ }x\in\Omega,
\end{gather*}
where $(e^1_x,\ldots, e^n_x)$ is the standard basis of $T_x\Omega$. Note that, for $k>1$, the map $g \colon \Omega \to \R^{nk}$ does not have a well-defined Jacobian determinant $J_g$. We call the function $|J_g|$ the \emph{$n$-Jacobian of $g$}.

\subsection{The upper Ahlfors bound}

The upper bound for $\Hausd^n(\Gamma_{y,r})$ follows from the measures $\Hausd^n(\proj_j(\Gamma_{y,r}))$ of the projections $\proj_j(\Gamma_{y,r})$ and the multiplicity of the restrictions $\proj_j|\Gamma_{y,r}$, which in turn can be estimated in terms of the multiplicity of the maps $f_j$. We formulate this as a lemma.

\begin{lemma}\label{lem:upper_hausdorff_bound}
Let $\Omega\subset \R^n$ be an open subset, $f_1,\ldots, f_k \colon \Omega \to \R^n$ be $K$-quasiregular mappings, $g = (f_1,\ldots, f_k) \colon \Omega \to \R^{nk}$, and $\Gamma = g(\Omega) \subset \R^{nk}$.
Then for every open subset $U \subset \Gamma$ satisfying $g^{-1}U\Subset\Omega$, we have
\begin{gather}\label{eq:projection_estimate}
  \Hausd^n(U) \leq n^{\frac{n}{2}}k^{\frac{n}{2}}K
\max_{j\in\{1,\ldots,k\}}N(f_j) \Hausd^n(\proj_j(U)).
\end{gather}
\end{lemma}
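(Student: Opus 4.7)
The plan is to sandwich $\Hausd^n(U)$ between two applications of the area formula---one for the full map $g$, and one for each component $f_j$---bridged by a pointwise comparison of the $n$-Jacobian $|J_g|$ with the ordinary Jacobians $J_{f_j}$. Since $g \in W^{1,n}_{\loc}(\Omega, \bR^{nk})$ has quasiregular (in particular Lusin (N)) components and $g^{-1}U \Subset \Omega$, the area formula for Sobolev maps applies to $g$ and would yield
\[
  \Hausd^n(U) \leq \int_U N(g, y, g^{-1}U) \,\dd \Hausd^n(y) = \int_{g^{-1}U} |J_g|(x) \,\dd x.
\]

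The heart of the argument is a pointwise bound on $|J_g|$. Writing $(Dg)^T Dg = \sum_i Df_i^T Df_i$ and applying AM--GM to the eigenvalues of this positive semidefinite $n\times n$ matrix yields
\[
  |J_g|(x)^2 \leq \left(\frac{\tr\bigl((Dg)^T Dg\bigr)}{n}\right)^n = \left(\frac{\sum_i \|Df_i(x)\|_F^2}{n}\right)^n,
\]
where $\|\cdot\|_F$ denotes the Frobenius norm. Taking $n/2$-th roots and then combining the power-mean inequality $(\sum_i a_i)^{n/2} \leq k^{n/2-1}\sum_i a_i^{n/2}$ applied to $a_i = \|Df_i\|_F^2$, the comparison $\|Df_i\|_F^2 \leq n\norm{Df_i}^2$, and the $K$-quasiregularity bound $\norm{Df_i}^n \leq K J_{f_i}$, I would obtain the pointwise estimate
\[
  |J_g|(x) \leq k^{n/2-1} K \sum_{i=1}^k J_{f_i}(x).
\]

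To conclude, I would apply the area formula to each non-constant quasiregular $f_j$ on $g^{-1}U$, using $\proj_j \circ g = f_j$, to obtain $\int_{g^{-1}U} J_{f_j}\,\dd x \leq N(f_j) \Hausd^n(\proj_j(U))$. Integrating the pointwise estimate, summing over $i$, and bounding the $k$-term sum by $k$ times its maximum then produces the desired inequality (in fact with constant $k^{n/2} K$, sharper than the claimed $n^{n/2}k^{n/2}K$). The main subtlety will be the constant bookkeeping: the crude route via $|J_g| \leq \norm{Dg}^n$ together with $\norm{Dg}^2 \leq \sum_i \norm{Df_i}^2$ gives only $|J_g|\leq k^{n/2} K \max_i J_{f_i}$, which loses an extra factor of $k$ after the final sum-to-max step, whereas the Frobenius-norm route above recovers that factor through the identity $\|Dg\|_F^2 = \sum_i \|Df_i\|_F^2$ combined with the sharper AM--GM bound $|J_g|\leq n^{-n/2}\|Dg\|_F^n$.
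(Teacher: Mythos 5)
Your proposal follows essentially the same route as the paper: area formula for $g$, pointwise bound on the $n$-Jacobian $|J_g|$ via AM--GM on the eigenvalues of $(Dg)^T Dg$ combined with the distortion inequality, and then the change-of-variables formula for each $f_j = \proj_j \circ g$. Your constant $k^{n/2}K$ is in fact sharper than the paper's $n^{n/2}k^{n/2}K$; the difference is that the paper bounds $\tr\bigl((Df_j)^T Df_j\bigr)$ by $(n\norm{Df_j})^2$ (each of the $n^2$ matrix entries is $\le\norm{Df_j}$), whereas the sharp singular-value bound is $\norm{Df_j}_F^2 \le n\norm{Df_j}^2$, as you use.

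One imprecision worth flagging: to apply the Sobolev area formula to the \emph{continuous} representative of $g$, you need $g\colon\Omega\to\R^{nk}$ itself to satisfy Lusin $(N)$ with respect to $\cH^n$, and this does \emph{not} follow merely from each component $f_j$ being Lusin $(N)$ in $\R^n$ (a 1-Lipschitz projection only gives $\cH^n(f_j(E)) \le \cH^n(g(E))$, which is the wrong direction). The paper proves Lusin $(N)$ for $g$ in a separate lemma using the higher integrability of $Df_j$ for quasiregular maps, and your proof needs the same (or an equivalent) step before the area formula can be invoked for $g$ rather than for some other Sobolev representative.
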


The upper bound in Proposition \ref{prop:Gromov_interpretation_Euclidean} follows now immediately. Indeed, since $\Gamma_{y,r}$ is open in $\Gamma$, we have, by \eqref{eq:projection_estimate}, that
\begin{align*}
\Hausd^n(\Gamma_{y,r}) &\le n^{\frac{n}{2}}k^{\frac{n}{2}}K
\max_{j\in\{1,\ldots,k\}}N(f_j) \Hausd^n(\proj_j(\Gamma_{y,r})) \\
&\le n^{\frac{n}{2}}k^{\frac{n}{2}}K 
\max_{j\in\{1,\ldots,k\}}N(f_j)\Hausd^n(B^n(\proj_j(y),r)) \\
&\le C(n) k^{\frac{n}{2}}K \left( \max_{j\in\{1,\ldots,k\}}N(f_j) \right) r^n, 
\end{align*}
where $C(n)>0$ depends only on $n$. Thus it suffices to prove Lemma \ref{lem:upper_hausdorff_bound}.

We begin by showing that the map $g$ has the Lusin property. 
\begin{lemma}\label{lem:Lusin_n}
Let $\Omega\subset \R^n$ be an open subset, $f_1,\ldots, f_k \colon \Omega \to \R^n$ be $K$-quasiregular mappings, $g = (f_1,\ldots, f_k) \colon \Omega \to \R^{nk}$, and $\Gamma = g(\Omega) \subset \R^{nk}$.
If $E \subset \Omega$ is an $\Hausd^n$-null subset, then 
$g(E) \subset \R^{nk}$ is also an $\Hausd^n$-null subset.
\end{lemma}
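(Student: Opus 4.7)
My approach is to establish Lusin (N) for $g$ through the area-formula machinery for continuous Sobolev mappings, reducing the statement to local integrability of the $n$-Jacobian $|J_g|$.

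First, since each $f_j$ is $K$-quasiregular, $g$ belongs to $W^{1,n}_\loc(\Omega, \bR^{nk})$. To control $|J_g|$ pointwise, I would combine Hadamard's inequality $|J_g|(x) \le \norm{Dg(x)}^n$ with the identity $|Dg(x)v|^2 = \sum_{j=1}^k |Df_j(x)v|^2$ (yielding $\norm{Dg(x)}^2 \le \sum_j \norm{Df_j(x)}^2$), the power-mean inequality, and the distortion bound $\norm{Df_j}^n \le K J_{f_j}$, to obtain
\[
|J_g|(x) \le k^{n/2-1}\,K \sum_{j=1}^k J_{f_j}(x) \qquad \text{for a.e. } x \in \Omega.
\]
Since each $J_{f_j}$ is locally integrable, so is $|J_g|$.

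The central step is then the area inequality for continuous $W^{1,n}_\loc$ maps into higher-dimensional Euclidean space: for every measurable $A \subset \Omega$,
\[
\int_A |J_g|(x) \dd x \;\ge\; \int_{\bR^{nk}} N(g, y, A) \dd \Hausd^n(y).
\]
Specialized to $A = E$ with $\Hausd^n(E) = 0$, the left-hand side vanishes, forcing $N(g, y, E) = 0$ for $\Hausd^n$-a.e.\ $y \in \bR^{nk}$. Since $\mathcal{X}_{g(E)}(y) \le N(g, y, E)$, this gives $\Hausd^n(g(E)) = 0$.

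The hard part is justifying this area inequality without assuming Lusin (N) a priori. The standard route uses a Lusin-type Lipschitz approximation of Sobolev mappings combined with the classical area formula for Lipschitz maps; it requires only continuity and $W^{1,n}$ regularity of $g$. A more hands-on alternative is to perform the Lipschitz decomposition $\Omega = N_0 \cup \bigcup_\lambda A_\lambda$ with $|N_0|=0$ explicitly: the Lipschitz pieces yield $\Hausd^n(g(E \cap A_\lambda)) = 0$ via the classical Lipschitz area formula, and one would then control the residual $g(E \cap N_0)$ by exploiting the already-established Lusin (N) of each $f_j$ (from Rickman's monograph) together with the graph representation $g|_U = (\mathrm{id}, f_2 \circ (f_1|_U)^{-1}, \dots, f_k \circ (f_1|_U)^{-1}) \circ f_1|_U$ available on each normal neighborhood $U$ of $f_1$; however, in either route an area-type inequality is what does the real work.
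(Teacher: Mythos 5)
The central step you rely on — the area inequality
\[
\int_A |J_g|\,\dd x \;\ge\; \int_{\R^{nk}} N(g, y, A)\,\dd\Hausd^n(y)
\]
for continuous $W^{1,n}_\loc$ maps $g$ — does not hold in general at the borderline exponent $p=n$, and in fact it is equivalent to the Lusin~(N) condition you are trying to prove: if $\Hausd^n(E)=0$ but $\Hausd^n(g(E))>0$, then the left side vanishes on $A=E$ while the right side is at least $\Hausd^n(g(E))>0$. So the first route is circular. The Lipschitz decomposition $\Omega=N_0\cup\bigcup_\lambda A_\lambda$ that you mention only establishes Lusin~(N) on the sets $A_\lambda$; it says nothing about $g(N_0)$, and the whole content of Lusin~(N) for $W^{1,n}$ maps is concentrated there.

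The second route you sketch also has a gap. Knowing that $f_1(E)$ is Lebesgue null in $\R^n$ does not control $\Hausd^n(g(E))$, because for $k\ge 2$ the set $f_1(E)\times\R^{n(k-1)}$ containing $g(E)$ has \emph{infinite} $\Hausd^n$-measure whenever $f_1(E)\neq\emptyset$; one would need a Lipschitz (or at least H\"older with exponent $1$) bound on the "graph height" $\psi = (f_2\circ(f_1|_U)^{-1},\ldots,f_k\circ(f_1|_U)^{-1})$, and such control is not available for quasiregular $f_j$ composed with a quasiconformal local inverse of $f_1$.

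The missing ingredient is the \emph{higher integrability} of quasiregular derivatives: by Gehring and Bojarski--Iwaniec, each $|Df_j|\in L^{p}_\loc$ for some $p>n$. This is precisely what moves past the $p=n$ obstruction. The paper's proof does not go through an area formula at all; it uses the Bojarski--Iwaniec estimate
\[
\sum_i\bigl(\diam f_j(Q_i)\bigr)^n \le C(n,p)\,\Hausd^n\Bigl(\bigcup_i Q_i\Bigr)^{1-\frac{n}{p}}\biggl(\int_{\bigcup_i Q_i}|Df_j|^p\biggr)^{\frac{n}{p}}
\]
over disjoint cubes, combines these across the $k$ components via H\"older, and covers $E$ by cubes of arbitrarily small total measure. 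Without invoking higher integrability (or an equivalent quantitative input beyond $W^{1,n}$ regularity), your argument does not close.
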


\begin{proof}
For each $j\in\{1,\ldots,k\}$,
the $j$-th component $f_j$ of $g=(f_1,\ldots,f_k)$ is quasiregular, 
and we may therefore fix an exponent $p_j > n$ of local higher integrability for $Df_j$. 
Then the proof of Bojarski and Iwaniec in \cite[Section 8.1]{Bojarski-Iwaniec} shows that 
there is $C(n,p_j)>0$ depending only on $n,p_j$ such that
if $Q_i \subset \Omega$ are disjoint cubes, then
\[
	\sum_i\left(\diam f_j(Q_i)\right)^n
 	\leq C(n, p_j) \Hausd^n\Bigl(\bigcup_i Q_i\Bigr)^{1-\frac{n}{p_j}} 
 	\biggl(\int_{\bigcup_i Q_i} |Df_j|^{p_j} \biggr)^\frac{n}{p_j}.
\]
Pick a common exponent $p > n$ of higher integrability for all $Df_j$, $j\in\{1,\ldots,k\}$. Then by H\"older's inequality and standard estimates, there exists $C(n,k,p)>0$ depending only on $n,k,p$ such that if $Q_i \subset \Omega$ are cubes with disjoint interiors, then
\begin{align*}
  	\sum_i \left(\diam g(Q_i)\right)^n
  	\leq C(n, k, p) \Hausd^n\Bigl(\bigcup_i Q_i\Bigr)^{1-\frac{n}{p}} 
  	\biggl( \sum_{j=1}^k \int_{\bigcup_i Q_i}|Df_j|^p \biggr)^\frac{n}{p}.
\end{align*}
Now the proof of the Lusin condition follows by intersecting the set of zero measure $E$ with a compact subset $A \subset \Omega$, covering $E \cap A$ with a collection of cubes with disjoint interiors and arbitrarily small total measure, and using the above estimate
to show that $g(E \cap A)$ has arbitrarily small $\Hausd^n$ measure.
\end{proof}

Since the maps $f_j \colon \Omega \to\R^n$ are $K$-quasiregular, we have the following estimate for the $n$-Jacobian of $g = (f_1,\ldots, f_k) \colon \Omega \to \R^{nk}$.

\begin{lemma}\label{lem:area_pontwise_estimate}
Let $\Omega\subset \R^n$ be an open subset, $f_1,\ldots, f_k \colon \Omega \to \R^n$ be $K$-quasiregular mappings, $g = (f_1,\ldots, f_k) \colon \Omega \to \R^{nk}$, and $\Gamma = g(\Omega) \subset \R^{nk}$. Then, for Lebesgue almost every $x \in \Omega$, we have
\[
|J_g|(x)\leq n^{\frac{n}{2}}K k^{\frac{n}{2}-1}\sum_{j=1}^kJ_{f_j}(x).
\]
\end{lemma}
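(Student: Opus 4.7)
The plan is to combine Hadamard's inequality, the distortion inequality for each $f_j$, and Jensen's inequality (power mean). Set $v_i := Dg(x)e^i_x \in \R^{nk}$ for $i \in \{1,\ldots,n\}$, so that $v_1,\ldots,v_n$ are the columns of the $nk\times n$ matrix $Dg(x)$ and $|J_g|(x) = |v_1\wedge\cdots\wedge v_n|$. Since $|v_1\wedge\cdots\wedge v_n|^2 = \det(Dg(x)^{T}Dg(x))$, Hadamard's inequality gives
\[
|J_g|(x) \le \prod_{i=1}^{n} |v_i|.
\]
Using the orthogonal decomposition $\R^{nk} = \R^{n}\oplus\cdots\oplus\R^{n}$, each column splits as $v_i = (Df_1(x)e^i_x,\ldots,Df_k(x)e^i_x)$, whence
\[
|v_i|^2 = \sum_{j=1}^{k} |Df_j(x)e^i_x|^2 \le \sum_{j=1}^{k} \|Df_j(x)\|^2,
\]
and therefore
\[
|J_g|(x) \le \Bigl(\sum_{j=1}^{k} \|Df_j(x)\|^2 \Bigr)^{n/2}.
\]

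Next, I would invoke $K$-quasiregularity of each $f_j$ in the form $\|Df_j(x)\|^n \le K J_{f_j}(x)$ for a.e.\ $x$. Since $2/n\le 1$ for $n\ge 2$, the function $t\mapsto t^{2/n}$ is concave on $[0,\infty)$, and Jensen's inequality applied to the uniform measure on $\{1,\ldots,k\}$ yields
\[
\frac{1}{k}\sum_{j=1}^{k}\bigl(\|Df_j(x)\|^n\bigr)^{2/n} \le \Bigl(\frac{1}{k}\sum_{j=1}^{k}\|Df_j(x)\|^n\Bigr)^{2/n},
\]
that is,
\[
\sum_{j=1}^{k}\|Df_j(x)\|^2 \le k^{1-2/n} \Bigl(\sum_{j=1}^{k}\|Df_j(x)\|^n\Bigr)^{2/n} \le k^{1-2/n}\Bigl(K \sum_{j=1}^{k} J_{f_j}(x)\Bigr)^{2/n}.
\]

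Raising both sides to the power $n/2$ and combining with the Hadamard estimate gives
\[
|J_g|(x) \le \bigl(k^{1-2/n} K^{2/n}\bigr)^{n/2}\sum_{j=1}^{k} J_{f_j}(x) = K\, k^{n/2-1}\sum_{j=1}^{k} J_{f_j}(x),
\]
which implies the stated bound (in fact with a better constant, the factor $n^{n/2}$ in the lemma being slack). There is no serious obstacle: the chain Hadamard $\to$ column decomposition $\to$ distortion inequality $\to$ Jensen is essentially forced, with Hadamard reducing the $n$-Jacobian to column norms, quasiregularity converting operator norms into $J_{f_j}^{2/n}$, and the concavity of $t^{2/n}$ (for $n\ge 2$) producing the exact $k^{n/2-1}$ dependence on the number of factors.
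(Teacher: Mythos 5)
Your proof is correct and takes essentially the same route as the paper's: reduce $\det(Dg(x)^{T}Dg(x))$ to a power of $\sum_{j}\lVert Df_j(x)\rVert^2$, then use the distortion bound $\lVert Df_j\rVert^n\le K J_{f_j}$ and H\"older/Jensen with exponent $n/2$ to produce $k^{n/2-1}$. The only difference is that you apply Hadamard's inequality to the Gram determinant where the paper uses $\det A \le (\tr A/n)^n$ together with a slightly lossy trace bound; your remark that the $n^{n/2}$ factor is slack is accurate.
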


\begin{proof}
Since
\[
|J_g|(x) = \sqrt{\det((Dg(x))^T Dg(x))},
\]
we have, by the distortion bound \eqref{eq:bd} for $f_j$ 
and H\"older's inequality, that
\begin{align*}
	|J_g|(x) 
	&=\sqrt{\det\sum_{j=1}^{k}(Df_j(x))^T Df_j(x)}
	\le\sqrt{\biggl(\frac{1}{n}\tr\sum_{j=1}^{k}(Df_j(x))^T Df_j(x)\biggr)^n}\\
	&\leq \frac{1}{n^{\frac{n}{2}}} 
 	\biggl( \sum_{j=1}^{k} \tr\bigl((Df_j(x))^T Df_j(x)\bigr) \biggr)^\frac{n}{2}
 	\leq \frac{1}{n^{\frac{n}{2}}} 
 		\biggl( \sum_{j=1}^k(n\norm{Df_j(x)})^2\biggr)^\frac{n}{2}\\
 	&\leq n^{\frac{n}{2}}K\cdot
		\biggl(\sum_{j=1}^k(J_{f_j}(x)^{\frac{2}{n}})^{\frac{n}{2}}
			\biggr)^{\frac{2}{n}\cdot\frac{n}{2}}
		k^{(1-\frac{2}{n})\frac{n}{2}}
	= n^{\frac{n}{2}}Kk^{\frac{n}{2}-1} \sum_{j=1}^k J_{f_j}(x)
\end{align*}
for Lebesgue a.e.\ $x\in\Omega$. 
\end{proof}

The last ingredient is the proof of Lemma \ref{lem:upper_hausdorff_bound} is an area formula for $g$. For more details, see Haj\l asz 
{\cite[Theorem 11]{Hajlasz-areathm}}.

\begin{lemma}\label{lem:area_thm}
Let $\Omega\subset \R^n$ be an open subset, $f_1,\ldots, f_k \colon \Omega \to \R^n$ be $K$-quasiregular mappings, $g = (f_1,\ldots, f_k) \colon \Omega \to \R^{nk}$, and $\Gamma = g(\Omega) \subset \R^{nk}$. Then, for every open subset $A \subset \Omega$, 
 \begin{equation}\label{eq:area_formula}
  \int_{A}|J_g|\dd\Hausd^n   = \int_{g(A)} N(g, y, A) \dd\Hausd^n(y).
\end{equation}
\end{lemma}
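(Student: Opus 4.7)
The plan is to verify the hypotheses of Haj\l asz's general area formula \cite[Theorem 11]{Hajlasz-areathm} for the map $g \colon \Omega \to \R^{nk}$ and apply it directly. That theorem requires $g$ to be continuous, to lie in $W^{1,n}_\loc(\Omega, \R^{nk})$, to be differentiable almost everywhere, and to satisfy the Lusin (N)-condition with respect to $\Hausd^n$; its conclusion is precisely the desired area formula with the $n$-Jacobian $\sqrt{\det((Dg)^T Dg)}$ on the left-hand side.

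Continuity of $g$ is immediate from Reshetnyak's theorem, which ensures that each non-constant $K$-quasiregular $f_j$ is continuous. The Sobolev regularity $g \in W^{1,n}_\loc(\Omega, \R^{nk})$ follows from the definition of quasiregularity applied componentwise. Moreover, each $f_j$ is differentiable Lebesgue almost everywhere (a classical fact for quasiregular maps, see e.g.\ Rickman \cite[Chapter I]{Rickman}), hence so is $g$, with differential obtained by stacking the $Df_j$ and with $\sqrt{\det((Dg)^T Dg)}$ coinciding with the function $|J_g|$ defined earlier in this section.

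The Lusin (N)-condition $\Hausd^n(g(E)) = 0$ whenever $\Hausd^n(E) = 0$ is exactly the content of Lemma \ref{lem:Lusin_n}, established through the higher integrability of each $Df_j$. With all hypotheses verified, the cited area formula yields
\[
\int_A |J_g|\,\dd \Hausd^n = \int_{\R^{nk}} N(g, y, A)\,\dd \Hausd^n(y),
\]
and since $N(g, y, A) = 0$ for $y \notin g(A)$, the right-hand integral reduces to one over $g(A)$. No substantive obstacle arises: the preparatory work has been carried out in Lemma \ref{lem:Lusin_n}, and this lemma is essentially a direct invocation of Haj\l asz's theorem.
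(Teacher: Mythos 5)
Your proposal takes essentially the same route as the paper: both invoke Haj\l asz's Theorem 11 for Sobolev maps and both rely on the Lusin (N)-condition established in Lemma~\ref{lem:Lusin_n}. The one minor difference is that the paper reads Theorem~11 as producing the formula for \emph{some} representative $\tilde g$ in the Sobolev equivalence class (with $|J_g|\in L^1_\loc$ from Lemma~\ref{lem:area_pontwise_estimate}), and then uses the continuity of $g$ together with Lusin~(N) and the discussion on p.~239 of that reference to identify $\tilde g = g$, whereas you fold Lusin~(N) in as a hypothesis of the cited theorem; this is a small bookkeeping variation that does not affect the substance of the argument.
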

\begin{proof}
The map $g$ is in $W^{1,n}_\loc(\Omega,\bR^{kn})$, 
and by Lemma \ref{lem:area_pontwise_estimate}, we have
$|J_g|\in L^1_{\operatorname{loc}}(\Omega)$. 
Hence, the Sobolev area formula \cite[Theorem 11]{Hajlasz-areathm} implies that \eqref{eq:area_formula} holds for some $\tilde{g}$ in the Sobolev equivalence class of $g$. Moreover, since $g$ is Lusin (N) by Lemma \ref{lem:Lusin_n}, we have $\tilde{g}= g$ by the discussion in \cite[p. 239]{Hajlasz-areathm}.
\end{proof}

We are now ready for the proof of Lemma \ref{lem:upper_hausdorff_bound}.

\begin{proof}[Proof of Lemma \ref{lem:upper_hausdorff_bound}]
Let $U \subset \Gamma$ be an open set satisfying $g^{-1}U \Subset \Omega$. Then, for each $y\in U$, we have $N(g,y,U) \ge 1$. Thus, by Lemmas \ref{lem:area_thm} and \ref{lem:area_pontwise_estimate}, we have
 \begin{align*}
  \Hausd^n(U) &\leq \int_U N(g, y,U) \dd \Hausd^n(y)
  = \int_{g^{-1}U} |J_g|\dd \Hausd^n\\
  &\leq n^{\frac{n}{2}}K k^{\frac{n}{2}-1}
  \sum_{j=1}^k \int_{g^{-1}U} J_{f_j}(x) \dd \Hausd^n(x),
\end{align*}
Since $f_j= \proj_j\circ g$, the change of variables for quasiregular mappings yields
\begin{align*}
\sum_{j=1}^k \int_{g^{-1}U} J_{f_j} \dd \Hausd^n
&= \sum_{i=1}^k \int_{\proj_j(U)} N(f_j, y_j, g^{-1} U) \dd \Hausd^n(y_j)\\
&\leq \sum_{j=1}^k N(f_j) \Hausd^n(\proj_j(U))\leq k\max_{j\in\{1,\ldots,k\}} N(f_j)\Hausd^n(\proj_j(U)),
\end{align*}
which completes the proof.
\end{proof}

\subsection{The lower Ahlfors bound}

In this section, we prove the lower estimate in Proposition \ref{prop:Gromov_interpretation_Euclidean}. The lower bound is obtained by considering a current $[\Gamma_{y,r}]$ associated to $\Gamma_{y,r}$ and two estimates which we combine in the following proposition. We define the current $[\Gamma_{y,r}]$ after the statement and devote the rest of this section for the proofs of the estimates.

\begin{prop}\label{prop:lower_Ahlfors_Euclidean}
Let $\Omega\subset\bR^n$ be an open subset for $n \geq 2$,
$k\in\bN$, and let $f_1, \ldots, f_k \colon \Omega \to \R^n$ 
be non-constant $K$-quasiregular maps for some $K\ge 1$
such that $\max_{j\in\{1,\ldots,k\}}N(f_j) < \infty$. 
Let $g = (f_1, \ldots, f_k) \colon \Omega \to \R^{kn}$ and 
$\Gamma=g(\Omega)\subset\R^{kn}$.
Then there exists a constant $C=C(n)>0$ depending only on $n$ having the property that
\begin{equation}
\label{eq:lower_Ahlfors_Euclidean}
\left( \min_{j\in \{1,\ldots, k\}} N(f_j)\right) \Hausd^n(\Gamma_{y,r}) \ge \mass_{\R^{kn}}\left([\Gamma_{y,r}]\right) \ge \left( \frac{1}{C k^{\frac{n}{2}} K \min_j N(f_j)} \right)^{n-1} r^n
\end{equation}
for each $y\in \Gamma$ and $r>0$ for which $g^{-1}(\Gamma_{y,r}) \Subset \Omega$.
\end{prop}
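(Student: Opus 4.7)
The plan is to define $[\Gamma_{y,r}]$ as the push-forward $g_\ast E$, where $E \in \curr_n(\Omega)$ is the canonical integration current of the compactly contained open set $g^{-1}(\Gamma_{y,r}) \Subset \Omega$ with the standard orientation of $\R^n$, and then to prove the two inequalities separately.

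For the upper bound $(\min_j N(f_j)) \Hausd^n(\Gamma_{y,r}) \ge \mass_{\R^{kn}}([\Gamma_{y,r}])$, I would combine the standard pointwise estimate
\[
\mass_{\R^{kn}}(g_\ast E) \le \int_{g^{-1}(\Gamma_{y,r})} |J_g| \, d\Hausd^n
\]
for push-forwards with the Sobolev area formula of Lemma~\ref{lem:area_thm}, which rewrites the right-hand side as $\int_{\Gamma_{y,r}} N(g, z, \Omega) \, d\Hausd^n(z)$. Since the fiber $g^{-1}\{z\}$ embeds into every fiber $f_j^{-1}\{\proj_j(z)\}$, we have the pointwise bound $N(g, z, \Omega) \le \min_j N(f_j)$, and the upper inequality follows. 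This is essentially a repackaging of the upper Ahlfors estimate of Lemma~\ref{lem:upper_hausdorff_bound} in the language of currents.

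For the lower bound I would implement Gromov's isoperimetric--slicing scheme. Set $T := [\Gamma_{y,r}]$, let $h \colon \R^{kn} \to \R$ be the $1$-Lipschitz distance $h(z) := |z - y|$, and for $t \in (0, r)$ define
\[
V(t) := \mass_{\R^{kn}}(T \llcorner B^{kn}(y,t)), \qquad S(t) := \mass_{\R^{kn}}(\langle T, h, t- \rangle).
\]
The boundary $\partial T$ is supported in $g(\partial g^{-1}(\Gamma_{y,r})) \subset \partial \Gamma_{y,r}$, hence is disjoint from $B^{kn}(y, t)$ for $t < r$, and Proposition~\ref{lem:properties_of_slices} then gives both the slicing inequality
\[
V(t) \ge \frac{1}{n} \int_0^t S(s) \, ds, \qquad t \in (0, r),
\]
and the identification $\langle T, h, t- \rangle = \partial (T \llcorner B^{kn}(y,t))$ for a.e.\ such $t$.

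The main obstacle, which I would defer to the next section, is to establish a Federer--Fleming-type isoperimetric inequality of the form
\[
V(t) \le C k^{n/2} K (\min_j N(f_j)) \, S(t)^{n/(n-1)},
\]
with $C$ depending only on $n$; heuristically the factor $k^{n/2} K$ reflects the pointwise $n$-Jacobian bound of Lemma~\ref{lem:area_pontwise_estimate}, while the multiplicity $\min_j N(f_j)$ enters through the area-formula comparison between the restriction $T \llcorner B^{kn}(y, t)$ and an auxiliary Federer--Fleming filling of its boundary slice. Once this inequality is in hand, rearranging it into $S(t) \ge (V(t)/c)^{(n-1)/n}$ with $c := Ck^{n/2}K \min_j N(f_j)$ and substituting into the slicing inequality yields, via a Gronwall-type argument for $V(t)^{1/n}$ with the trivial initial datum $V(0) = 0$, the estimate $V(r) \ge r^n/(n^{2n} c^{n-1})$. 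This is the required lower bound after absorbing the $n$-dependent factor into $C$.
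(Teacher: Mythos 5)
Your outline matches the paper's structure step by step up to the crucial inequality, and the outer layers are correct: $[\Gamma_{y,r}]$ is indeed defined by $\omega\mapsto\int_{\Omega_{y,r}}g^*\omega$, the upper bound via $\mass\le\int|J_g|$, the area formula, and $N(g)\le\min_j N(f_j)$ is exactly the paper's Lemma \ref{lem:well_defined_current}, the observation that $\partial[\Gamma_{y,r}]$ vanishes in the ball parallels Lemma \ref{lem:local_normality} (the paper proves this by mollifying the Sobolev form $g^*\omega$, which is more careful than your informal support claim, but morally the same), and the slicing inequality $V(t)\ge\frac1n\int_0^t S(s)\,ds$ plus the Bihari--LaSalle/Gronwall step are exactly Lemmas \ref{lemma:integral_estimate_preliminary} and \ref{lem:integral_inequality}.

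The gap is precisely where you defer: you write down the isoperimetric inequality
\[
V(t)\le Ck^{n/2}K\bigl(\min_j N(f_j)\bigr)S(t)^{n/(n-1)}
\]
as an unproved assumption and offer only a heuristic via an ``auxiliary Federer--Fleming filling of the boundary slice.'' That heuristic does not work as stated: the deformation/isoperimetric theorems give you \emph{some} filling of $\langle T,h,t-\rangle$ with mass $\lesssim S(t)^{n/(n-1)}$, but there is no a priori reason why the specific filling $T\llcorner B^{kn}(y,t)$ should be mass-comparable to the optimal one, nor does this route produce the constant $k^{n/2}K\min_j N(f_j)$. The paper's actual mechanism (Lemma \ref{lem:push_isoperimetric_estimate}) is different and essential: one pushes forward under each coordinate projection $\proj_j\colon\R^{kn}\to\R^n$. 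From $|J_g|\le n^{n/2}Kk^{n/2-1}\sum_j J_{f_j}$ one gets $\mass([\Gamma_{y,t}])\lesssim k^{n/2}K\sum_j\int_{\Omega_{y,t}}J_{f_j}$, and pairing with $\proj_j^*\omega_t$ for an appropriate bump form $\omega_t$ bounds each $\int_{\Omega_{y,t}}J_{f_j}$ by $\mass_{\R^n}\bigl((\proj_j)_*[\Gamma_{y,t}]\bigr)$. The decisive point is then that, by the change-of-variables formula for $f_j$, the top-dimensional current $(\proj_j)_*[\Gamma_{y,t}]$ in $\R^n$ is represented as $\mathcal L^n\llcorner u_t$ for the compactly supported, integer-valued multiplicity function $u_t = N(f_j,\cdot,\Omega_{y,t})\chi_{f_j(\Omega_{y,t})}$; this is precisely the class of normal currents for which Federer's isoperimetric inequality \cite[4.5.9(31)]{Federer} holds in $\R^n$ with an absolute constant. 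Combined with $\mass(\partial(\proj_j)_*[\Gamma_{y,t}])\le\mass(\partial[\Gamma_{y,t}])$ ($\proj_j$ being $1$-Lipschitz), this delivers the inequality you stated. Without this projection step your argument does not close, because the current $[\Gamma_{y,t}]$ lives in codimension $(k-1)n$ in $\R^{kn}$ and has no obvious Lebesgue-density structure to which Federer's theorem applies directly.

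A small additional remark: your characterization ``$\partial T$ is supported in $g(\partial g^{-1}(\Gamma_{y,r}))$'' presupposes a Stokes-type formula for the merely Sobolev map $g$, which is exactly what has to be proved. The paper instead shows directly that $\partial_{B^{kn}(y,r)}[\Gamma_{y,r}]=0$ by approximating $g^*\omega$, for $\omega$ compactly supported in the open ball, by smooth compactly supported forms in $L^{n/(n-1)}$ together with their differentials in $L^1$; this is the correct rigorous substitute and should replace your support heuristic.
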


The lower bound in \eqref{eq:Ahlfors} follows immediately from this lemma and hence the proof of this proposition completes the proof of Proposition \ref{prop:Gromov_interpretation_Euclidean}.

\subsubsection{Current $[\Gamma_{y,r}]$}

Although the notation may suggest otherwise, we do not define the current $[\Gamma_{y,r}]$ as integration over $\Gamma_{y,r}$ but a push-forward of the integration over $g^{-1}(\Gamma_{y,r})$.

Let $y\in \Gamma$ and $r>0$ be such that $\Omega_{y,r}=g^{-1}(\Gamma_{y,r}) \Subset \Omega$. In this case, $g|\Omega_{y,r} \colon \Omega_{y,r} \to B^{nk}(y,r)$ is a proper map. Indeed, let $S \subset B^{nk}(y,r)$ be compact. Then $g^{-1}S$ is a closed subset of $\Omega$. Moreover $g^{-1}S \subset g^{-1}B^{nk}(y,r) = \Omega_{y,r} \subset \overline{\Omega}_{y,r}$. Since $\overline{\Omega}_{y,r} \subset \Omega$, we have that $g^{-1}S = g^{-1}S \cap \overline{\Omega}_{y,r}$ is a closed subset of $\overline{\Omega}_{y,r}$ by relative topology. Since $\overline{\Omega}_{y,r}$ is compact, $g^{-1}S$ is compact.

Since $g \in  W^{1,n}_\loc(\Omega,\bR^{nk})$, the linear functional $[\Gamma_{y,r}] \colon C^\infty_0(\wedge^n \R^{kn}) \to \R$,
\begin{equation}\label{eq:pushed_integration_current}
\omega \mapsto \int_{\Omega_{y,r}} g^*\omega,
\end{equation}
where $g^*\omega$ is a measurable $n$-form in $\Omega$, is well-defined.

To show that $[\Gamma_{y,r}]$ is a current, denote, for every $\omega \in C^\infty_0(\wedge^n\bR^{kn})$,
\begin{gather*}
\lambda_\omega:=\sup_{x\in\R^{kn}}\norm{\omega_x}_{\mass}<\infty.
\end{gather*}
Then, for Lebesgue a.e.\ $x\in\Omega$,
\begin{equation}
\begin{split}
|(g^* \omega)_{x}| &=|\bigl\langle(g^*\omega)_{x},\vol_{\R^n}(x)\bigr\rangle| \\
  &= |\omega_{g(x)}\wedge\left( (Dg(x) e^1_{x}) \wedge \cdots \wedge (Dg(x)e^n_{x}) \right)| \\
  &\leq \norm{\omega_{g(x)}}_{\mass}|J_g(x)|\le\lambda_\omega|J_g(x)|.
\end{split}
\label{eq:pointwise}
 \end{equation}

We are now ready to prove the upper bound in Proposition \ref{prop:lower_Ahlfors_Euclidean}.

\begin{lemma}\label{lem:well_defined_current}
	Let $\Omega\subset\bR^n$ be an open subset for $n \geq 2$,
	$k\in\bN$, and let $f_1, \ldots, f_k \colon \Omega \to \R^n$ 
	be non-constant $K$-quasiregular maps for some $K\ge 1$
	such that $\max_{j\in\{1,\ldots,k\}}N(f_j) < \infty$. 
	Let $g = (f_1, \ldots, f_k) \colon \Omega \to \R^{kn}$ and 
	$\Gamma=g(\Omega)\subset\R^{kn}$.
	Let $y\in \Gamma$ and $r>0$ be such that $g^{-1}(\Gamma_{y,r})\Subset \Omega$. Then the functional $[\Gamma_{y,r}]$ is a current in $\curr_n(\R^{kn})$ and 
 	\begin{gather} \label{eq:mass}
  	\mass_{\R^{kn}}\left([\Gamma_{y,r}]\right) 
   		\leq \Bigl( \min_{j\in\{1,\ldots,k\}}N(f_j) \Bigr)\cH^n(\Gamma_{y,r})<\infty.
 	\end{gather}
\end{lemma}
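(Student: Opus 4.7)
The plan is to verify linearity and distributional continuity of the functional \eqref{eq:pushed_integration_current}, and then bound its mass by combining the pointwise estimate \eqref{eq:pointwise} with the area formula (Lemma \ref{lem:area_thm}). Linearity is immediate from the definition, so the work is in continuity and the mass estimate.

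First I would show that $|J_g| \in L^1(\Omega_{y,r})$. By Lemma \ref{lem:area_pontwise_estimate}, $|J_g| \le n^{n/2} K k^{n/2-1} \sum_j J_{f_j}$ Lebesgue a.e. Each $J_{f_j}$ is locally integrable (a quasiregular map lies in $W^{1,n}_{\loc}$), and $\Omega_{y,r} \Subset \Omega$ by hypothesis, so $\int_{\Omega_{y,r}} |J_g|\dd\Hausd^n < \infty$. Combined with the pointwise bound \eqref{eq:pointwise}, this gives, for any $\omega \in C^\infty_0(\wedge^n\R^{kn})$,
\[
 \bigl|[\Gamma_{y,r}](\omega)\bigr| \le \lambda_\omega \int_{\Omega_{y,r}} |J_g|\dd\Hausd^n.
\]
Distributional continuity then follows by dominated convergence: if $\omega_i\to\omega$ in the sense of test forms, the integrands $(g^*\omega_i)_x$ converge pointwise a.e.\ on $\Omega_{y,r}$ and are dominated by a fixed multiple of $|J_g|$.

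Next, for the mass bound, I restrict to test forms $\omega$ with $\sup_x \|\omega_x\|_{\mass}\le 1$, i.e.\ $\lambda_\omega \le 1$. The same pointwise estimate gives $|[\Gamma_{y,r}](\omega)| \le \int_{\Omega_{y,r}} |J_g| \dd\Hausd^n$. Applying the area formula (Lemma \ref{lem:area_thm}) with $A = \Omega_{y,r}$ yields
\[
 \int_{\Omega_{y,r}} |J_g|\dd\Hausd^n = \int_{g(\Omega_{y,r})} N(g, z, \Omega_{y,r}) \dd\Hausd^n(z).
\]
Since $\Omega_{y,r} = g^{-1}(\Gamma_{y,r})$, we have $g(\Omega_{y,r}) = \Gamma_{y,r}$, and the multiplicity satisfies $N(g,z,\Omega_{y,r}) \le N(g) \le \min_j N(f_j)$ at every $z\in \Gamma_{y,r}$. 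Taking the supremum over admissible $\omega$ therefore gives
\[
 \mass_{\R^{kn}}([\Gamma_{y,r}]) \le \Bigl(\min_j N(f_j)\Bigr) \Hausd^n(\Gamma_{y,r}).
\]

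Finally, the finiteness $\Hausd^n(\Gamma_{y,r}) < \infty$ is a direct consequence of the upper Ahlfors bound Lemma \ref{lem:upper_hausdorff_bound} applied to the open set $U = \Gamma_{y,r}$, whose projections satisfy $\proj_j(\Gamma_{y,r}) \subset B^n(\proj_j(y),r)$. I do not foresee a genuinely hard step here: every ingredient has been proved above the statement, and the only point that deserves care is ensuring that the image $g(\Omega_{y,r})$ appearing on the right-hand side of the area formula really coincides with $\Gamma_{y,r}$, which is immediate from the definition $\Omega_{y,r} = g^{-1}(\Gamma_{y,r})$.
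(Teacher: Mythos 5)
Your proposal is correct and follows essentially the same route as the paper: pointwise bound \eqref{eq:pointwise}, the area formula of Lemma \ref{lem:area_thm}, the multiplicity bound $N(g)\le\min_j N(f_j)$, and finiteness of $\Hausd^n(\Gamma_{y,r})$ via Lemma \ref{lem:upper_hausdorff_bound}. The only cosmetic difference is in the continuity step: you invoke dominated convergence against $|J_g|\in L^1(\Omega_{y,r})$, whereas the paper simply notes that $\lambda_{\omega_j}\le\sup_x|(\omega_j)_x|\to 0$ and uses the already-established bound $|[\Gamma_{y,r}](\omega_j)|\le\lambda_{\omega_j}\bigl(\min_j N(f_j)\bigr)\Hausd^n(\Gamma_{y,r})$; both arguments work.
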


\begin{proof}
For every $\omega \in C^\infty_0(\wedge^n\bR^{kn})$, we have, by \eqref{eq:pointwise} and Lemma \ref{lem:area_thm}, that
\begin{align*}
\Bigl|\int_{\Omega_{y,r}} g^*\omega\Bigr|
&\leq \lambda_\omega\int_{\Omega_{y,r}}|J_g|\dd \Hausd^n
= \lambda_\omega \int_{\Gamma_{y,r}} N(g, y') \dd\Hausd^n(y')\\
&\leq \lambda_\omega  N(g) \Hausd^n(\Gamma_{y,r})
\leq \lambda_\omega \Bigl(\min_{j\in\{1,\ldots,k\}} N(f_j)\Bigr)\Hausd^n(\Gamma_{y,r}).
\end{align*}
To show that $[\Gamma_{y,r}]$ is a current it suffices now to observe that, for a converging sequence $\omega_j \to 0$ in $C^\infty_0(\wedge^n\bR^{kn})$, we have
\begin{align}\label{eq:pre-mass}
\left| [\Gamma_{y,r}](\omega_j)\right| 
\le \lambda_{\omega_j}  \Bigl(\min_{j\in\{1,\ldots,k\}} N(f_j)\Bigr)\Hausd^n(\Gamma_{y,r}).  
\end{align}
Since differential forms are sections of covectors, we have the point-wise estimate $\norm{(\omega_j)_x}_{\mass} \le \lvert(\omega_j)_x\rvert$ for almost every $x\in \Omega$. Thus $\lambda_{\omega_j} \to 0$ as $j\to \infty$. Since $\Hausd^n(\Gamma_{y,r}) < \infty$ by \eqref{eq:projection_estimate}, $[\Gamma_{y,r}]$ is continuous and hence a current. Moreover, the mass estimate \eqref{eq:mass} follows from the estimate \eqref{eq:pre-mass}. 
\end{proof}

We move now to prove the lower bound in Proposition \ref{prop:lower_Ahlfors_Euclidean}. We begin by proving that the current $[\Gamma_{y,r}]$ is locally normal.

\begin{lemma}\label{lem:local_normality}
Let $\Omega\subset\bR^n$ be an open subset for $n \geq 2$,
$k\in\bN$, and let $f_1, \ldots, f_k \colon \Omega \to \R^n$ 
be non-constant $K$-quasiregular maps for some $K\ge 1$
such that $\max_{j\in\{1,\ldots,k\}}N(f_j) < \infty$. 
Let $g = (f_1, \ldots, f_k) \colon \Omega \to \R^{kn}$ and 
$\Gamma=g(\Omega)\subset\R^{kn}$.
Let $y\in \Gamma$ and $r>0$ be such that $g^{-1}(\Gamma_{y,r})\Subset \Omega$. Then 
\[
\partial_{B^{kn}(y,r)} [\Gamma_{y,r}]=0.
\]
\end{lemma}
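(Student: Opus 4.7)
The plan is to reduce the statement to classical Stokes' theorem via the properness of $g|_{\Omega_{y,r}}$ together with a smooth approximation argument. Fix $\omega \in C_0^\infty(\wedge^{n-1} B^{kn}(y,r))$; by the definition of the boundary operator and of $[\Gamma_{y,r}]$ in \eqref{eq:pushed_integration_current},
\begin{equation*}
\partial_{B^{kn}(y,r)}[\Gamma_{y,r}](\omega) = [\Gamma_{y,r}](d\omega) = \int_{\Omega_{y,r}} g^*(d\omega).
\end{equation*}
Since $\spt\omega$ is a compact subset of $B^{kn}(y,r)$ and $g|_{\Omega_{y,r}}\colon\Omega_{y,r}\to B^{kn}(y,r)$ is proper (as shown in the paragraph preceding Lemma \ref{lem:well_defined_current}), both $g^*\omega$ and $g^*(d\omega)$ are supported in compact subsets of $\Omega_{y,r}$.

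Next I would smooth $g$. Each component $f_j$ is locally in $W^{1,n+\eps}$ for some $\eps>0$ by the higher integrability of quasiregular maps, so the same holds for $g$. Fixing $\Omega'$ with $\overline{\Omega_{y,r}}\subset\Omega'\Subset\Omega$ and mollifying, I obtain $g_j\in C^\infty(\Omega',\R^{nk})$ with $g_j\to g$ in $W^{1,n+\eps}(\Omega')$ and uniformly on $\overline{\Omega'}$. I then choose a cutoff $\chi\in C_0^\infty(\Omega_{y,r})$ that equals $1$ on an open neighborhood $V$ of $g^{-1}(\spt\omega)\cap\overline{\Omega_{y,r}}$. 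For each $j$, the form $\chi\cdot g_j^*\omega$ is smooth and compactly supported in $\Omega$, so the classical Stokes theorem yields
\begin{equation*}
0 = \int_\Omega d(\chi\cdot g_j^*\omega) = \int_\Omega d\chi\wedge g_j^*\omega + \int_\Omega \chi\cdot g_j^*(d\omega).
\end{equation*}

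To pass to the limit, I use that the coefficients of $g_j^*\omega$ and $g_j^*(d\omega)$ are $(n-1)$-fold products of partial derivatives of $g_j$ multiplied by the bounded smooth functions $\omega\circ g_j$ and $d\omega\circ g_j$, respectively. The $W^{1,n+\eps}$-convergence of $g_j$ combined with uniform convergence $g_j\to g$ on $\overline{\Omega'}$ gives convergence of both pullbacks in $L^{(n+\eps)/(n-1)}(\Omega')$. Letting $j\to\infty$ in the above identity yields
\begin{equation*}
\int_\Omega d\chi\wedge g^*\omega + \int_\Omega \chi\cdot g^*(d\omega) = 0.
\end{equation*}
Since $\chi\equiv 1$ on the open neighborhood $V$ of $g^{-1}(\spt\omega)$, the set $\spt d\chi$ is disjoint from $g^{-1}(\spt\omega)$; hence $g^*\omega$ vanishes a.e.\ there and the first integral drops out. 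Moreover $\chi\cdot g^*(d\omega) = g^*(d\omega)$ a.e., since $\spt d\omega\subset\spt\omega$ and $\chi\equiv 1$ on a neighborhood of $g^{-1}(\spt\omega)$. Therefore $\int_\Omega g^*(d\omega) = 0$, which is the claim.

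The main obstacle is the Sobolev-only regularity of $g$, which rules out a direct application of Stokes' theorem on $\Omega_{y,r}$; the cutoff-plus-mollification device is precisely what reduces matters to the smooth, compactly supported setting. A secondary technical point is verifying the required $L^p$-convergence of the pullback forms, where the higher integrability of quasiregular mappings (the $W^{1,n+\eps}$ bound) provides a convenient margin over the critical exponent.
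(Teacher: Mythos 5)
Your argument is correct and arrives at the same conclusion, but it takes a genuinely different route from the paper's. The paper does not mollify the map $g$ at all: it invokes the Goldstein--Troyanov result that for a continuous Sobolev map $g \in W^{1,n}$ the weak exterior derivative satisfies $\dd g^*\omega = g^*\dd\omega$ with $g^*\omega \in L^{n/(n-1)}$ and $g^*\dd\omega \in L^1$, and then mollifies the pullback form $g^*\omega$ (which is already compactly supported in $\Omega_{y,r}$) by a standard convolution to get smooth, compactly supported $\omega_j$ with $\dd\omega_j \to g^*\dd\omega$ in $L^1$, from which $\int_{\Omega_{y,r}} g^*\dd\omega = \lim_j \int \dd\omega_j = 0$. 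You instead mollify the map $g$ itself, which costs you a cutoff function (since smooth approximations of $g$ no longer map into $B^{kn}(y,r)$) but buys you a reduction to classical Stokes without any citation to the theory of weak exterior derivatives. Two minor remarks on your write-up: (1) the coefficients of $g_j^*(\dd\omega)$ involve $n$-fold, not $(n-1)$-fold, products of partials of $g_j$, so the convergence there is in $L^{(n+\eps)/n}$ rather than $L^{(n+\eps)/(n-1)}$, but this does not affect the conclusion since you only need $L^1$-convergence against the bounded cutoff $\chi$; (2) the higher integrability margin $\eps>0$ is convenient but not strictly needed here---an inductive H\"older argument shows that $W^{1,n}$-convergence of $g_j$, together with uniform bounds, already gives $L^1$-convergence of the $n$-fold minor determinants. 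In both approaches the essential difficulty (the lack of $C^1$ regularity of $g$) is resolved by an approximation argument, and both are valid.
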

\begin{proof}
Since $\Omega_{y,r} = g^{-1}(\Gamma_{y,r}) \Subset \Omega$, the map $g|\Omega_{y,r}$ is also in $W^{1,n}(\Omega_{y,r},\bR^{kn})$. 
Hence, by e.g.~\cite[Proposition 4.1]{Goldstein-Troyanov_cohomology},
for every $\omega \in C_0^\infty(\wedge^{n-1} B^{kn}(y,r))$, we have
$\dd g^*\omega = g^*\dd\omega \in L^1(\wedge^n \Omega_{y,r})$
and $g^*\omega \in L^{n/(n-1)}(\wedge^{n-1} \Omega_{y,r})$,
where $\dd g^*\omega$ is defined in the weak sense, that is, 
\[
\int_{\Omega_{y,r}} \psi\dd g^*\omega 
= -\int_{\Omega_{y,r}}\dd\psi \wedge g^*\omega
\]
for every $\psi \in C_0^\infty(\Omega_{y,r})$.

Since $g^*\omega$ is compactly supported in $\Omega_{y,r}$, there exists, by a standard convolution argument, a sequence $(\omega_j)$ of $(n-1)$-forms in $C_0^\infty(\wedge^{n-1} \Omega_{y,r})$ for which $\omega_j \to g^*\omega$ in $L^{n/(n-1)}(\wedge^{n-1}\Omega_{y,r})$ and $\dd\omega_j \to \dd g^*\omega = g^*\dd\omega$ in $L^1(\wedge^n\Omega_{y,r})$ as $j \to \infty$.
Thus
\begin{gather*}
 \partial_{B^{kn}(y,r)} [\Gamma_{y,r}](\omega)=[\Gamma_{y,r}](\dd\omega)
=\int_{\Omega_{y,r}}g^*\dd\omega
=\lim_{j\to\infty}\int_{\Omega_{y,r}}\dd\omega_j=0,
\end{gather*}
that is, the boundary $\partial_{B^{kn}(y,r)} [\Gamma_{y,r}]$ vanishes.
\end{proof}

Currents $[\Gamma_{y,r}]$ restrict naturally to currents $[\Gamma_{y,t}]$ for $t\in (0,r)$.

\begin{lemma}\label{lem:restriction_formula}
Let $\Omega\subset\bR^n$ be an open subset for $n \geq 2$,
$k\in\bN$, and let $f_1, \ldots, f_k \colon \Omega \to \R^n$ 
be non-constant $K$-quasiregular maps for some $K\ge 1$
such that $\max_{j\in\{1,\ldots,k\}}N(f_j) < \infty$. 
Let $g = (f_1, \ldots, f_k) \colon \Omega \to \R^{kn}$ and 
$\Gamma=g(\Omega)\subset\R^{kn}$.
Let $y\in \Gamma$ and $r>0$ be such that $g^{-1}(\Gamma_{y,r}) \Subset \Omega$. Then, for every $t\in(0,r)$, 
\[
[\Gamma_{y,r}] \llcorner B^{kn}(y, t) = [\Gamma_{y,t}].
\]
\end{lemma}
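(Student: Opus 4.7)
The plan is to reduce both sides to an integral against $g^* \omega$ over $\Omega_{y,r}$ (the common ``source'' open set) and use dominated convergence. Throughout, let $T := [\Gamma_{y,r}]$. By Lemma \ref{lem:well_defined_current} we have $\mass_{\R^{kn}}(T) < \infty$, so by Lemma \ref{lem:mass_integral_representation} there exist a Radon measure $\mu_T$ and a measurable $n$-vector field $\vec T$ representing $T$, and the restriction $T \llcorner B^{kn}(y,t)$ is defined by \eqref{eq:restriction} as
\[
(T \llcorner B^{kn}(y,t))(\omega) = \int_{B^{kn}(y,t)} \langle \omega, \vec T\rangle \dd \mu_T
\qquad \text{for } \omega \in C_0^\infty(\wedge^n \R^{kn}).
\]

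First I would choose a sequence $\phi_j \in C_0^\infty(B^{kn}(y,t))$ with $0 \le \phi_j \le 1$ and $\phi_j \to \chi_{B^{kn}(y,t)}$ pointwise on $\R^{kn}$ (e.g.\ a standard smooth exhaustion of $B^{kn}(y,t)$ by compact sets). Since $|\langle \omega, \vec T\rangle| \le \sup_z \norm{\omega_z}_{\mass}$ is bounded and $\mu_T$ is finite, dominated convergence gives
\[
(T \llcorner B^{kn}(y,t))(\omega)
= \lim_{j\to\infty} \int_{\R^{kn}} \phi_j \langle \omega, \vec T\rangle \dd \mu_T
= \lim_{j\to\infty} T(\phi_j \omega),
\]
since for each $j$ the form $\phi_j \omega$ lies in $C_0^\infty(\wedge^n \R^{kn})$ and Lemma \ref{lem:mass_integral_representation} applies directly.

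Next I would compute $T(\phi_j \omega)$ from the definition \eqref{eq:pushed_integration_current} of $[\Gamma_{y,r}]$:
\[
T(\phi_j \omega) = \int_{\Omega_{y,r}} g^*(\phi_j \omega) = \int_{\Omega_{y,r}} (\phi_j \circ g)\, g^* \omega.
\]
By the pointwise bound \eqref{eq:pointwise} we have $|g^* \omega(x)| \le \lambda_\omega |J_g(x)|$ a.e.\ on $\Omega_{y,r}$, and by Lemma \ref{lem:area_thm} together with the upper Ahlfors bound this dominating function lies in $L^1(\Omega_{y,r})$. Another application of dominated convergence then yields
\[
\lim_{j\to\infty} T(\phi_j \omega)
= \int_{\Omega_{y,r}} (\chi_{B^{kn}(y,t)} \circ g)\, g^* \omega
= \int_{\Omega_{y,t}} g^* \omega = [\Gamma_{y,t}](\omega),
\]
using that $(\chi_{B^{kn}(y,t)} \circ g)(x) = 1$ precisely when $x \in g^{-1}(B^{kn}(y,t)) \cap \Omega = \Omega_{y,t}$. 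Combining the two limits gives the claimed equality.

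The argument is essentially routine and the only point requiring care is the double dominated convergence: on the measure side one needs $\mu_T$ to be finite (which is $\mass_{\R^{kn}}(T) < \infty$ from Lemma \ref{lem:well_defined_current}), and on the integral side one needs $|J_g| \in L^1(\Omega_{y,r})$ (which is the finite-mass bound again, via Lemma \ref{lem:area_thm}). No additional regularity of $g$ beyond $W^{1,n}_{\mathrm{loc}}$ is used.
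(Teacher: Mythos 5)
Your argument is correct and is essentially the same as the paper's: both approximate $\chi_{B^{kn}(y,t)}$ by a sequence of functions in $C_0^\infty(B^{kn}(y,t))$ with values in $[0,1]$ (the paper chooses them via an exhaustion by compact sets, which is just one way to realize your pointwise convergence), and then pass to the limit on the two sides by the same integrability facts — finiteness of $\mu_T$ via $\mass_{\R^{kn}}([\Gamma_{y,r}]) < \infty$, and $|J_g|\in L^1(\Omega_{y,r})$ via Lemma \ref{lem:area_pontwise_estimate} and $\Omega_{y,r}\Subset\Omega$. The only cosmetic difference is that the paper phrases the convergence through interior multiplication $\llcorner\psi_i$ and vanishing tail integrals, whereas you invoke dominated convergence directly; these are the same computation.
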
 
\begin{proof}
Let $t\in (0,r)$ and let $(A_i)$ be an increasing sequence of compact subsets exhausting $B^{kn}(y, t)$, that is, $B^{kn}(y, t) = \bigcup_{j=1}^\infty A_i$. 
For every $i\in\bN$, let $\psi_i \in C_0^\infty(B^{kn}(y, t))$ be a function for which $0\le\psi_i \leq 1$ and $\psi_i|_{A_i}= 1$. 
Then
\begin{gather*}
 ([\Gamma_{y,r}] \llcorner B^{kn}(y, t))\llcorner \psi_i
 = [\Gamma_{y,r}]\llcorner\psi_i.
\end{gather*}

Let 
\[
[\Gamma_{y,r}](\cdot) = \int_{\Omega_{y,r}} \langle \cdot, \vec{T} \rangle \dd \mu_T
\]
be an integral representation of $T=[\Gamma_{y,r}]$ as in \eqref{eq:integral}.

For any $\omega \in C^\infty_0(\wedge^n \R^{kn})$,
by $\Omega_{y,t}\Subset\Omega$
and the inner regularity of the Radon measure $\mu_T$, we have
\begin{multline*}
\bigl|(([\Gamma_{y,r}] \llcorner B^{kn}(y, t))\llcorner \psi_i)(\omega)
-([\Gamma_{y,r}] \llcorner B^{kn}(y, t))(\omega)\bigr|\\
\le|\langle\omega,\vec{T}\rangle|\mu_T(\Omega_{y,t}\setminus g^{-1}(A_i))
\to 0
\end{multline*}
as $i\to\infty$.

Since $J_{f_j}\in L^1_{\loc}(\Omega)$ and $\Omega_{y,t}\Subset\Omega$, we have 
for every $\omega \in C^\infty_0(\wedge^n \R^{kn})$,
by \eqref{eq:pointwise} and Lemma \ref{lem:area_pontwise_estimate}, that
\begin{align*}
  |([\Gamma_{y,r}]\llcorner\psi_i)(\omega) - [\Gamma_{y,t}](\omega)|
&=\biggl|\int_{\Omega_{y,t} \setminus g^{-1} A_i} g^*\omega\biggr|\\
&\leq \lambda_\omega n^{\frac{n}{2}}k^{\frac{n}{2}-1}K 
 \sum_{j=1}^k\int_{\Omega_{y,t} \setminus g^{-1} A_i}J_{f_j}\dd\Hausd^n\to 0
\end{align*}
as $i\to\infty$.

Having these estimates at our disposal, we conclude that, for each $\omega \in C^\infty_0(\wedge^n \R^{kn})$, we have
\begin{align*}
\left( [\Gamma_{y,r}]\llcorner B^{nk}(y,t)\right)(\omega) 
&= \lim_{i \to \infty} (([\Gamma_{y,r}] \llcorner B^{kn}(y, t))\llcorner \psi_i)(\omega) \\
&= \lim_{i \to \infty} ([\Gamma_{y,r}] \llcorner \psi_i)(\omega)
= [\Gamma_{y,t}](\omega).
\end{align*}
This completes the proof.
\end{proof}

\subsubsection{Slicing and isoperimetric estimates for $[\Gamma_{y,r}]$}

The first step towards the lower Ahlfors bound is the following slicing estimate for $[\Gamma_{y,r}]$ -- this is one of the key estimates in the proof of the lower Ahlfors bound.

\begin{lemma}
\label{lemma:integral_estimate_preliminary}
Let $\Omega\subset\bR^n$ be an open subset for $n \geq 2$,
$k\in\bN$, and let $f_1, \ldots, f_k \colon \Omega \to \R^n$ 
be non-constant $K$-quasiregular maps for some $K\ge 1$
such that $\max_{j\in\{1,\ldots,k\}}N(f_j) < \infty$. 
Let $g = (f_1, \ldots, f_k) \colon \Omega \to \R^{kn}$ and 
$\Gamma=g(\Omega)\subset\R^{kn}$.
Let $y\in \Gamma$ and $r>0$ be such that $g^{-1}(\Gamma_{y,r})\Subset\Omega$. Then, for every $t\in(0,r)$,
\begin{gather*}\label{eq:integral_estimate_preliminary}
\mass_{\R^{kn}} ([\Gamma_{y,t}])
\ge\frac{1}{n} \int_0^{t}
\mass_{\bR^{kn}} (\partial_{\bR^{kn}} [\Gamma_{y,s}]) \dd s.
\end{gather*}
\end{lemma}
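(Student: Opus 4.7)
The plan is to apply the slicing estimate of Proposition~\ref{lem:properties_of_slices} to the current $T = [\Gamma_{y,r}]$ viewed as an $n$-current on $U = B^{kn}(y,r)$, with the $1$-Lipschitz radial function $h(z) := |z-y|$ and the interval $(a,b) = (0,r)$. First I would verify the hypotheses: for each $s \in (0,r)$ one has $U_{h,s} = B^{kn}(y,s)$, which is non-empty since $y \in \Gamma$ and compactly contained in $U$. The required finiteness is furnished by the earlier lemmas: Lemma~\ref{lem:well_defined_current} yields $\mass_U(T) \le \mass_{\R^{kn}}([\Gamma_{y,r}]) < \infty$, while Lemma~\ref{lem:local_normality} gives $\partial_U T = 0$, so in particular $\mass_U(\partial_U T) = 0$.

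Next I would identify the slice. Since $\partial_U T = 0$, the definition of the slice simplifies to $\langle T, h, s-\rangle = \partial_U (T \llcorner B^{kn}(y,s))$, and Lemma~\ref{lem:restriction_formula} then rewrites this as
\[
\langle T, h, s-\rangle = \partial_U [\Gamma_{y,s}]
\]
for every $s \in (0,r)$. Plugging this into Proposition~\ref{lem:properties_of_slices}(\ref{enum:slice_prop_1}) with $m = n$ and $L = 1$ gives
\[
\mass_{B^{kn}(y,t)}([\Gamma_{y,r}]) \ge \frac{1}{n}\int_0^t \mass_U(\partial_U [\Gamma_{y,s}]) \dd s
\]
for every $t \in (0,r)$.

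Finally I would rewrite both sides in terms of masses on $\R^{kn}$. For the left-hand side, \eqref{eq:mass_restriction_interplay} together with Lemma~\ref{lem:restriction_formula} yields
\[
\mass_{B^{kn}(y,t)}([\Gamma_{y,r}]) = \mass_{\R^{kn}}([\Gamma_{y,r}] \llcorner B^{kn}(y,t)) = \mass_{\R^{kn}}([\Gamma_{y,t}]).
\]
For the integrand, the support bound $\spt [\Gamma_{y,s}] \subset \overline{B^{kn}(y,s)} \subset B^{kn}(y,r)$ passes to $\partial_U [\Gamma_{y,s}]$, so this boundary extends by zero to an $(n-1)$-current on $\R^{kn}$ agreeing with $\partial_{\R^{kn}}[\Gamma_{y,s}]$ and having the same mass. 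Combining these identifications gives the asserted inequality.

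The main technical point is the bookkeeping of boundaries and masses passing between the open set $U = B^{kn}(y,r)$ and the ambient $\R^{kn}$; this is handled uniformly by the fact that $\spt [\Gamma_{y,s}]$ is compactly contained in $U$ whenever $s < r$, so no mass is lost on restriction, and the use of the slicing proposition is then immediate from Lemmas~\ref{lem:well_defined_current}, \ref{lem:local_normality}, and \ref{lem:restriction_formula}.
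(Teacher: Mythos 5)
Your proof is correct and follows essentially the same route as the paper: it applies Proposition~\ref{lem:properties_of_slices} with the radial $1$-Lipschitz function, uses Lemmas~\ref{lem:well_defined_current} and \ref{lem:local_normality} to verify finiteness and vanishing of the boundary, identifies the slice at radius $s$ with $\partial[\Gamma_{y,s}]$ via Lemma~\ref{lem:restriction_formula}, and finishes with \eqref{eq:mass_restriction_interplay}. Your explicit handling of the ambient set $U=B^{kn}(y,r)$ and the support argument for passing boundaries between $U$ and $\R^{kn}$ is a welcome clarification of a point the paper treats more implicitly.
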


\begin{proof}
Let $t\in (0,r)$. By Lemma \ref{lem:restriction_formula} and \eqref{eq:mass_restriction_interplay},
we have that 
\[
\mass_{\R^{kn}} ([\Gamma_{y,t}])=\mass_{B^{kn}(y,t)} ([\Gamma_{y,r}]).
\]
Similarly, by Lemmas \ref{lem:local_normality} and \ref{lem:well_defined_current}, we have
\[
\mass_{B^{kn}(y,r)} ([\Gamma_{y,r}]) + \mass_{B^{kn}(y,r)}(\partial_{B^{kn}(y,r)}[\Gamma_{y,r}]) = \mass_{B^{kn}(y,r)} ([\Gamma_{y,r}]) < \infty.
\]

Let now $h_y \colon \R^{nk} \to \R$ be the $1$-Lipschitz function $x\mapsto |x-y|$. Then $h_y^{-1}(-\infty,t) = B^{nk}(y,t)$ and, by Proposition \ref{lem:properties_of_slices}, we have
\[
\mass_{B^{kn}(y,t)} ([\Gamma_{y,r}]) \ge \frac{1}{n}\int_0^t \mass_{\R^{kn}}( \langle [\Gamma_{y,r}], h_y, s-\rangle ) \dd s.
\]
Since
\begin{align*}
\langle [\Gamma_{y,r}], h_y, s-\rangle 
&= \partial_{B^{kn}(y,r)}\left([\Gamma_{y,r}]\llcorner B^{kn}(y,s)\right) - (\partial_{B^{kn}(y,r)}[\Gamma_{y,r}])\llcorner B^{kn}(y,s) \\
&= \partial_{B^{kn}(y,r)}[\Gamma_{y,s}] - (\partial_{B^{kn}(y,r)}[\Gamma_{y,r}])\llcorner B^{kn}(y,s) 
= \partial_{B^{kn}(y,r)}[\Gamma_{y,s}]
\end{align*}
and $\partial_{B^{kn}(y,r)}[\Gamma_{y,s}] = \partial_{\R^{kn}}[\Gamma_{y,s}]$ for all $0<s\le t < r$, we have
\[
\mass_{\R^{kn}} ([\Gamma_{y,t}]) \ge \frac{1}{n}\int_0^t \mass_{\R^{kn}}( \partial_{\R^{kn}}[\Gamma_{y,s}] ) \dd s
\]
as claimed.
\end{proof}

We finish this section with an isoperimetric estimate for the currents $[\Gamma_{y,r}]$ -- this is the other key estimate in the proof of the lower Ahlfors bound.

\begin{lemma}\label{lem:push_isoperimetric_estimate}
Let $\Omega\subset\bR^n$ be an open subset for $n \geq 2$,
$k\in\bN$, and let $f_1, \ldots, f_k \colon \Omega \to \R^n$ 
be non-constant $K$-quasiregular maps for some $K\ge 1$
such that $\max_{j\in\{1,\ldots,k\}}N(f_j) < \infty$. 
Let $g = (f_1, \ldots, f_k) \colon \Omega \to \R^{kn}$ and 
$\Gamma=g(\Omega)\subset\R^{kn}$.
Let $y\in \Gamma$ and $r>0$ be such that $g^{-1}(\Gamma_{y,r}) \Subset\Omega$. Then there is a constant $C=C(n)>0$ depending only on $n$ such that, for Lebesgue almost every $t \in (0,r)$, we have
\begin{align*}
\left(\mass_{\R^{kn}} (\partial_{\R^{kn}}[\Gamma_{y,t}])\right)^{\frac{n}{n-1}}
\ge\frac{\mass_{\R^{kn}}([\Gamma_{y,t}])}{C(n)k^{\frac{n}{2}} K\min_j N(f_j)}.
\end{align*}
\end{lemma}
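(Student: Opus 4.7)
The strategy is Gromov's projection-and-isoperimetric argument: push $[\Gamma_{y,t}]$ down to an $n$-current on $\R^n$ via a coordinate projection $\proj_j$, apply the classical BV isoperimetric inequality in $\R^n$, and translate the estimate back to $\R^{kn}$ using the Jacobian comparison of Lemma~\ref{lem:area_pontwise_estimate}.

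First, for Lebesgue a.e.\ $t \in (0, r)$, the boundary $\partial_{\R^{kn}}[\Gamma_{y,t}]$ has finite mass: this follows from Proposition~\ref{lem:properties_of_slices}\eqref{enum:slice_prop_2} applied to the $1$-Lipschitz function $h_y(x) := |x - y|$, using that $\partial_{\R^{kn}}[\Gamma_{y,t}]$ is essentially the slice $\langle [\Gamma_{y,r}], h_y, t- \rangle$ (as in the proof of Lemma~\ref{lemma:integral_estimate_preliminary}). Fix such a $t$. For each $j \in \{1, \dots, k\}$, since $\proj_j \circ g = f_j$ and $\proj_j$ is $1$-Lipschitz, the pushforward $\proj_{j*}[\Gamma_{y,t}]$ is an $n$-current on $\R^n$; by the change of variables for quasiregular maps, it is represented by integration against the non-negative integer-valued function
\[
\nu_j(y') := \sum_{x \in f_j^{-1}(y') \cap \Omega_{y,t}} i(x, f_j),
\]
which is supported in $\overline{B^n(\proj_j(y), t)}$ and bounded by $N(f_j)$. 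Local normality of $\proj_{j*}[\Gamma_{y,t}]$ together with the standard identification of locally normal $n$-currents on $\R^n$ with $BV$ functions yields $\nu_j \in BV(\R^n)$, $\mass(\proj_{j*}[\Gamma_{y,t}]) = \|\nu_j\|_{L^1}$, and $\mass(\partial \proj_{j*}[\Gamma_{y,t}]) = |D\nu_j|(\R^n) \le \mass(\partial [\Gamma_{y,t}])$, the final inequality from the $1$-Lipschitz property of $\proj_j$.

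Next, apply the BV isoperimetric inequality to $\nu_j$. Since $\nu_j$ is integer-valued, the layer-cake decomposition gives $\nu_j = \sum_{l \ge 1} \mathbf{1}_{\{\nu_j \ge l\}}$, and the coarea formula yields $|D\nu_j|(\R^n) = \sum_l P(\{\nu_j \ge l\})$. Combining the classical isoperimetric inequality $|\{\nu_j \ge l\}| \le c(n)\, P(\{\nu_j \ge l\})^{n/(n-1)}$ with the elementary bound $\sum_l a_l^{n/(n-1)} \le (\sum_l a_l)^{n/(n-1)}$ for $a_l \ge 0$, we obtain
\[
\mass(\proj_{j*}[\Gamma_{y,t}]) = \|\nu_j\|_{L^1} \le c(n)\, |D\nu_j|(\R^n)^{n/(n-1)} \le c(n)\, \mass(\partial [\Gamma_{y,t}])^{n/(n-1)}.
\]
Alternatively, a cruder argument using the single bound $\|\nu_j\|_{L^1} \le N(f_j) |\{\nu_j > 0\}|$ combined with the isoperimetric inequality for the support of $\nu_j$ yields the same estimate up to an extra factor of $N(f_j)$, which is the origin of the $\min_j N(f_j)$ factor in the statement.

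Finally, we combine with the Jacobian comparison. By Lemma~\ref{lem:area_pontwise_estimate}, the area formula of Lemma~\ref{lem:area_thm}, and the identity $\mass(\proj_{j*}[\Gamma_{y,t}]) = \int_{\Omega_{y,t}} J_{f_j}\, \dd \mathcal{H}^n$:
\[
\mass([\Gamma_{y,t}]) \le \int_{\Omega_{y,t}} |J_g|\, \dd \mathcal{H}^n \le n^{n/2} K k^{n/2-1} \sum_{j=1}^k \mass(\proj_{j*}[\Gamma_{y,t}]) \le C(n)\, K\, k^{n/2}\, \mass(\partial [\Gamma_{y,t}])^{n/(n-1)},
\]
after summing the isoperimetric bound from the previous paragraph. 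This proves the claimed inequality. The main technical obstacle is the rigorous identification $\mass(\partial \proj_{j*}[\Gamma_{y,t}]) = |D\nu_j|(\R^n)$ in the BV sense: since $f_j$ is only Sobolev-regular and has a branch set, one must verify that the boundary current precisely captures the distributional derivative of the multiplicity function $\nu_j$, which requires care but ultimately follows from the local normality established in Lemma~\ref{lem:local_normality} and the Riesz-type correspondence between locally normal top-dimensional currents and $BV$ functions on $\R^n$.
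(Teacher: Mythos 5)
Your proof is correct and follows the paper's projection-and-isoperimetric strategy, but you unpack the key isoperimetric step rather than citing it. The paper verifies that $(\proj_j)_*[\Gamma_{y,t}]$ is a normal current of the form $\mathcal{L}^n\llcorner u_t$ with $u_t$ integer-valued and compactly supported, and then invokes Federer's isoperimetric inequality \cite[4.5.9(31)]{Federer} for exactly such currents; you instead identify $(\proj_j)_*[\Gamma_{y,t}]$ with a $BV$ function $\nu_j$ and prove that special case of Federer's result from scratch via the layer-cake decomposition of $\nu_j$, the coarea formula, and the superadditivity of $t\mapsto t^{n/(n-1)}$. Both routes are valid; the paper's is shorter by reference, yours is more self-contained. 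A pleasant byproduct of your chain of inequalities is that, by bounding $\mass_{\R^{kn}}([\Gamma_{y,t}])$ directly by $\int_{\Omega_{y,t}}|J_g|\,\dd\cH^n$ rather than going through $\cH^n(\Gamma_{y,t})$ via Lemma~\ref{lem:well_defined_current} as the paper does, you never pick up the $\min_j N(f_j)$ factor, so you prove a slightly sharper inequality than the one stated (the factor can be dropped; it is $\ge 1$, so this is harmless). One small inaccuracy in an unused aside: your cruder bound $\|\nu_j\|_{L^1}\le N(f_j)\,|\{\nu_j>0\}|$ would, after summing over $j$, contribute a $\max_j N(f_j)$ rather than $\min_j N(f_j)$; in the paper the $\min$ actually enters through the preliminary mass bound of Lemma~\ref{lem:well_defined_current}, not through the isoperimetric step.
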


\begin{proof}
For every $t\in (0,r)$, by Lemmas \ref{lem:well_defined_current}, \ref{lem:area_thm}, and \ref{lem:area_pontwise_estimate}, we have
\begin{gather*}
\mass_{\R^{kn}} ([\Gamma_{y,t}]) 
\leq n^{\frac{n}{2}}K k^{\frac{n}{2}-1} \Bigl( \min_{j\in\{1,\ldots,k\}} N(f_j) \Bigr) \sum_{j=1}^k \int_{\Omega_{y,t}} J_{f_j}\dd\Hausd^n,
\end{gather*}
where $\Omega_{y,t} = g^{-1}(\Gamma_{y,t})$.

Let $\psi_t\in C_0^\infty(\wedge^n \R^n)$ be a function satisfying $0 \le \psi_t \le 1$ and $\omega_t|\Omega_{y,t} = \cH^n|\Omega_{y,t}$ as measures, where $\omega_t = \psi_t \vol_{\R^n}$.
Let also $j\in \{1,\ldots, k\}$. Then 
\begin{align*}
\int_{\Omega_{y,t}}J_{f_j}\dd\cH^n
&\le \int_{\Omega_{y,t}} \psi_t(f_j(x))J_{f_j}(x) \dd\cH^n(x)
= \int_{\Omega_{y,t}} f_j^* \omega_t 
= \int_{\Omega_{y,t}} g^* \proj_j^* \omega_t\\
&=[\Gamma_{y,t}](\proj_j^* \omega_t)
=((\proj_j)_*[\Gamma_{y,t}])(\omega_t)
\le\mass_{\R^n} ((\proj_j)_*[\Gamma_{y,t}]).
\end{align*}
Thus
\[
\mass_{\R^{kn}} ([\Gamma_{y,t}]) \le n^{\frac{n}{2}}K k^{\frac{n}{2}-1} \Bigl( \min_{j\in\{1,\ldots,k\}} N(f_j) \Bigr)\sum_{j=1}^n \mass_{\R^{n}}((\proj_j)_*[\Gamma_{y,t}]).
\]
Since 
\[
\mass_{\R^n} (\partial_{\R^n}((\proj_j)_*[\Gamma_{y,t}]))
= \mass_{\R^n} ((\proj_j)_*\partial_{\R^n} [\Gamma_{y,t}])
\leq \mass_{\R^{kn}}(\partial_{\R^n} [\Gamma_{y,t}]),
\]
it suffices to, for almost every $t\in (0,r)$, verify the isoperimetric inequality
\begin{equation}
\label{eq:mass_isoperimetric_est} 
\mass_{\R^n} ((\proj_j)_*[\Gamma_{y,t}])
 \leq C(n)\left(\mass_{\R^n} (\partial_{\R^n}((\proj_j)_*[\Gamma_{y,t}]))\right)^\frac{n}{n-1}
\end{equation}
for each $j\in \{1,\ldots, k\}$. We show that $(\proj_j)_*[\Gamma_{y,t}]$ satisfies the assumptions for the isoperimetric inequality for $n$-currents in \cite[4.5.9(31)]{Federer}. More precisely, we show that $(\proj_j)_*[\Gamma_{y,t}]$ is locally normal and satisfies $(\proj_j)_*[\Gamma_{y,t}] = \mathcal{L}^n\llcorner g$, where $g \colon \R^n \to \Z$ is measurable and compactly supported and $\mathcal{L}^n$ is the Lebesgue measure in $\R^n$.

Let $j\in \{1,\ldots, k\}$. Since $\proj_j$ is $1$-Lipschitz, we have
\[
\mass_{\R^n} ((\proj_j)_*[\Gamma_{y,t}])\leq \mass_{\R^{kn}}([\Gamma_{y,r}]) < \infty.
\]
By Lemma \ref{lemma:integral_estimate_preliminary}, we also have that $\mass_{\R^{kn}}([\Gamma_{y,t}]) < \infty$ for almost every $t\in (0,r)$. Thus
\[
\mass_{\R^n} (\partial_{\R^n}((\proj_j)_*[\Gamma_{y,t}]))
= \mass_{\R^n} ((\proj_j)_*\partial_{\R^n} [\Gamma_{y,t}])
\leq \mass_{\R^{kn}}(\partial_{\R^n} [\Gamma_{y,t}]) <\infty.
\]
Hence $(\proj_j)_*[\Gamma_{y,t}]$ is a normal current for almost every $t\in (0,r)$. 

Let $\omega \in C_0^\infty(\wedge^n \R^n)$. Then, by the change of variables,
\begin{align*}
((\proj_j)_*[\Gamma_{y,t}])(\omega)=[\Gamma_{y,t}](\proj_j^* \omega)
&= \int_{\Omega_{y,t}} g^* \proj_j^* \omega
= \int_{\Omega_{y,t}} f_j^* \omega\\
&= \int_{f_j(\Omega_{y,t})} N(f_j, z, \Omega_{y,t}) \omega(z)\\
&=\int_{\bR^n} N(f_j, z, \Omega_{y,t})\chi_{f_j(\Omega_{y,t})}\omega(z).
\end{align*}
Thus
\[
(\proj_j)_*[\Gamma_{y,t}] = \mathcal{L}^n\llcorner u_t,
\]
where $u_t\colon \R^n \to \N$ is the function $z\mapsto N(f_j, z, \Omega_{y,t})\chi_{f_j(\Omega_{y,t})}$. Since $u_t$ has compact support, we conclude that, by the isoperimetric inequality for $n$-currents \cite[4.5.9(31)]{Federer},
there exists $C=C(n)>0$, depending only on $n$, for which \eqref{eq:mass_isoperimetric_est} holds. The claim follows.
\end{proof}

\subsubsection{Proof of Proposition \ref{prop:lower_Ahlfors_Euclidean}}

The final ingredient in obtaining the proof of Proposition \ref{prop:lower_Ahlfors_Euclidean} is a variant of the Bihari--LaSalle inequality \cite{Bihari}, which in turn is a nonlinear generalization of Gr\"onwall's inequality. 

\begin{lemma}\label{lem:integral_inequality}
Let $n>1$ be an integer, $a > 0$, and $C>0$. 
Let also $g\in L_{\loc}^{(n-1)/n}([0,a])$ be a function for which $g>0$
Lebesgue almost everywhere on $(0,a)$ and 
\[
g(t)\ge C\int_0^t g^{\frac{n-1}{n}}(s) \dd s
\]
for almost every $t\in(0,a)$.
Then 
\[
g(t) \geq \left(\frac{C}{n}\right)^n t^n
\]
for almost every $t \in (0, a)$.
\end{lemma}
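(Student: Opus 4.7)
The plan is to reduce the integral inequality to a differential inequality for the antiderivative and then integrate an explicit expression obtained via the chain rule. Set
\[
G(t) := \int_0^t g^{\frac{n-1}{n}}(s) \dd s, \qquad t \in [0,a].
\]
Since $g^{(n-1)/n} \in L^1_{\loc}([0,a])$, the function $G$ is absolutely continuous with $G'(t) = g^{(n-1)/n}(t)$ for Lebesgue almost every $t$, and $G(0)=0$. Because $g>0$ a.e., $G$ is strictly increasing on $(0,a)$, so $G(t)>0$ for every $t \in (0,a)$.

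Next, I would rewrite the hypothesis as $g(t) \ge C\, G(t)$ a.e., which upon raising to the power $(n-1)/n$ yields
\[
G'(t) = g^{\frac{n-1}{n}}(t) \ge C^{\frac{n-1}{n}} G^{\frac{n-1}{n}}(t) \quad \text{for a.e.\ } t \in (0,a).
\]
Fix $\eps \in (0,a)$. On $[\eps, a)$ the values of $G$ lie in $[G(\eps), \infty)$ with $G(\eps)>0$, so the map $s \mapsto s^{1/n}$ is Lipschitz on this range; hence $G^{1/n}$ is absolutely continuous on $[\eps, t]$ for every $t \in (\eps, a)$. The chain rule gives
\[
\frac{\dd}{\dd t} G^{1/n}(t) = \frac{G'(t)}{n\, G^{(n-1)/n}(t)} \ge \frac{C^{(n-1)/n}}{n} \quad \text{for a.e.\ } t \in (\eps, a).
\]
Integrating from $\eps$ to $t$ and then letting $\eps \to 0^+$ (using continuity of $G$ at $0$ and $G(0)=0$) produces
\[
G^{1/n}(t) \ge \frac{C^{(n-1)/n}}{n}\, t,
\]
so $G(t) \ge C^{n-1} t^n / n^n$. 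Combining with $g(t) \ge C\, G(t)$ yields $g(t) \ge (C/n)^n t^n$ for a.e.\ $t \in (0,a)$, which is the desired bound.

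The step requiring the most care is the chain-rule/integration step, since $g$ is merely $L^{(n-1)/n}_{\loc}$ rather than continuous; the argument is saved by the fact that $G$ is absolutely continuous and bounded below by a strictly positive constant on $[\eps, t]$, which makes $G^{1/n}$ absolutely continuous there so that the pointwise a.e.\ derivative inequality may be integrated. The passage $\eps \to 0$ is harmless because $G$ is continuous with $G(0)=0$.
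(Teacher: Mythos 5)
Your proof is correct and follows essentially the same strategy as the paper: set up the antiderivative $G$, derive the pointwise differential inequality $(G^{1/n})' \ge C^{(n-1)/n}/n$ almost everywhere, and integrate. The only cosmetic differences are that the paper absorbs $C$ into the definition of $G$ and, in place of your $\eps$-truncation, invokes the one-sided fundamental theorem of calculus for non-decreasing functions to handle the possible failure of absolute continuity of $G^{1/n}$ near $t=0$.
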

\begin{proof} 
Let $G\colon [0,a] \to \R$ be the function
\[
t \mapsto C \int_0^t g^{\frac{n-1}{n}}(s) \dd s.
\]
Then $G$ is absolutely continuous, non-decreasing on $[0,a]$, and positive on $(0,a)$. Thus,
 \[
 (G^{1/n})' 
 = \frac{G'}{nG^{\frac{n-1}{n}}}
 = \frac{C g^{\frac{n-1}{n}}}{nG^{\frac{n-1}{n}}}
 \geq \frac{C}{n}
\]
almost everywhere on $[0,a]$.
Since $G(0)=0$, we have for almost every $t \in (0, a)$ that
 \[
 g^{\frac{1}{n}}(t) \ge 
 G^{\frac{1}{n}}(t) - G^{\frac{1}{n}}(0) 
 \geq \int_0^t\bigl(G^{\frac{1}{n}}\bigr)'(s)\dd s
 \geq \int_0^t \frac{C}{n} \dd s
 = \frac{C}{n}t.
 \]
\end{proof}

\begin{proof}[Proof of Proposition \ref{prop:lower_Ahlfors_Euclidean}]
By Lemmas \ref{eq:integral_estimate_preliminary}, \ref{lem:push_isoperimetric_estimate} and \ref{lem:integral_inequality}, there exists a constant $C=C(n)>0$, depending only on $n$, for which 
\[
\mass_{\R^{kn}}([\Gamma_{y,r}]) \ge \biggl(\frac{1} {Ck^{\frac{n}{2}} K\min_j N(f_j)}\biggr)^{n-1} n^{-2 n} r^n.
\]
Since 
\[
 \Bigl( \min_{j\in\{1,\ldots,k\}}N(f_j) \Bigr)\cH^n(\Gamma_{y,r}) \ge \mass_{\R^{kn}}\left([\Gamma_{y,r}]\right)
\]
by Lemma \ref{lem:well_defined_current}, we conclude that
\begin{align*}
\cH^n(\Gamma_{y,r}) &\ge \left(\frac{1}{Ck^{\frac{n}{2}} K\min_j N(f_j)}\right)^{n-1} \frac{1}{n^{2 n}} \frac{1}{\min_j N(f_j)} r^n \\
&= \left( \frac{1}{C' k^\frac{n(n-1)}{2} K^{n-1} \left( \min_j N(f_j)\right)^n}\right) r^n,
\end{align*}
where $C=C(n)>0$ and $C'=C'(n)>0$ depend only on $n$. The proof is complete.
\end{proof}


\section{Proof of Theorem \ref{thm:Gromov_interpretation}}
\label{sec:Gromov_for_maps}

In this section, we prove Theorem \ref{thm:Gromov_interpretation} using Proposition \ref{prop:Gromov_interpretation_Euclidean}. We use the same notation as before. Given a Riemannian $n$-manifold $N$, $k\in \N$, and a subset $\Gamma \subset N^k$, we denote
\[
\Gamma_{y,r} = B_{N^k}(y,r) \cap \Gamma
\]
for $y\in N^k$ and $r>0$.

Our first goal is to prove a small scale version of Theorem \ref{thm:Gromov_interpretation}.

\begin{lemma}\label{lem:manifold_small_scale}
Let $M$ and $N$ be closed, connected, oriented Riemannian $n$-manifolds, and let $f_1, \ldots, f_k \colon M \to N$ be non-constant $K$-quasiregular maps $M \to N$. Let also $g =(f_1,\ldots, f_k) \colon M \to N^k$ and $\Gamma = g(M)$. Then there exists $\lambda>0$ depending only on $N$ and $f_1$ and having the property that, for all $y \in \Gamma$ and $r\in (0,\lambda/4)$, we have 
\begin{align*}
\frac{1}{\left(C(n) k^{\frac{n}{2}} K\right)^{n-1} \left(\min_j \deg f_j\right)^{n}} \le
\frac{\cH^n(\Gamma_{y,r})}{r^n} \leq C(n) k^\frac{n}{2} K \left(\max_j \deg f_j\right).
\end{align*}
\end{lemma}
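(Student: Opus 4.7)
\emph{Plan.} The plan is to reduce Lemma \ref{lem:manifold_small_scale} to the Euclidean Proposition \ref{prop:Gromov_interpretation_Euclidean} via bi-Lipschitz coordinate charts, exploiting the fact that Riemannian manifolds are locally Euclidean up to bi-Lipschitz distortion that can be made arbitrarily close to $1$ at small scales.

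First I would fix $\lambda > 0$ satisfying three conditions, each attainable by compactness of $M$ and $N$ and the non-constancy of $f_1$: every ball $B_N(w, \lambda) \subset N$ and every ball $B_M(x, \lambda) \subset M$ lies in a bi-Lipschitz chart to a Euclidean ball with distortion bounded by some fixed constant close to $1$; and, for every $x \in M$, the preimage $f_1^{-1}(B_N(f_1(x), \lambda))$ admits a connected component $U(x, f_1, \lambda)$ which is a normal neighborhood of $x$ for $f_1$ and is contained in a single bi-Lipschitz $M$-chart. The third condition, where the $f_1$-dependence of $\lambda$ enters, follows from a Lebesgue-number-type argument: normal neighborhoods of positive radius exist at every $x$ by the discreteness and openness of $f_1$, and compactness of $M$ uniformizes the radius. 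All bi-Lipschitz distortions and geometric constants depending on $M$ and $N$ can then be absorbed into the dimensional constants $C(n)$ in the final estimates.

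For the lower bound, I would fix $y = (y_1, \ldots, y_k) \in \Gamma$ and $r \in (0, \lambda/4)$, pick any $x_0 \in g^{-1}(y)$, and let $U_0$ be the component of $g^{-1}(\Gamma_{y,r})$ containing $x_0$. Since $g^{-1}(\Gamma_{y,r}) \subset f_1^{-1}(B_N(y_1, r)) = f_1^{-1}(B_N(f_1(x_0), r))$ and $r < \lambda/4$, the set $U_0$ is compactly contained in $U(x_0, f_1, \lambda/2) \Subset U(x_0, f_1, \lambda)$, which lies in a bi-Lipschitz $M$-chart. Pulling $g$ back through the charts on $M$ near $x_0$ and on each $B_N(y_j, r)$ gives a Euclidean map $\tilde g = (\tilde f_1, \ldots, \tilde f_k) \colon \tilde \Omega \to \R^{kn}$ with each $\tilde f_j$ being $K'$-quasiregular for some $K'$ close to $K$, to which Proposition \ref{prop:Gromov_interpretation_Euclidean} applies. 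The Euclidean lower bound, together with $N(\tilde f_j) \leq \deg f_j$, then yields $\cH^n(\Gamma_{y,r}) \geq \cH^n(g(U_0)) \geq r^n / \bigl((C(n) k^{n/2} K)^{n-1} (\min_j \deg f_j)^n\bigr)$.

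For the upper bound, applying Proposition \ref{prop:Gromov_interpretation_Euclidean} to each component of $g^{-1}(\Gamma_{y,r})$ and summing would introduce an unwanted factor of $\deg f_1$, so I would instead invoke the area formula globally on $M$. Covering $g^{-1}(\Gamma_{y,r})$ by finitely many bi-Lipschitz $M$-charts and applying Lemma \ref{lem:area_thm} in each yields $\cH^n(\Gamma_{y,r}) \leq \int_{g^{-1}(\Gamma_{y,r})} |J_g| \, dV_M$, since $N(g, \cdot, g^{-1}(\Gamma_{y,r})) \geq 1$ on $\Gamma_{y,r}$. The pointwise estimate of Lemma \ref{lem:area_pontwise_estimate}, extended to $M$ via the same charts, gives $|J_g| \leq n^{n/2} K k^{n/2-1} \sum_j J_{f_j}$; the area formula for each quasiregular $f_j$ then provides $\int_{g^{-1}(\Gamma_{y,r})} J_{f_j} \, dV_M \leq \deg f_j \cdot \cH^n(\proj_j(\Gamma_{y,r})) \leq C(n) \deg f_j \cdot r^n$, since $\proj_j(\Gamma_{y,r}) \subset B_N(y_j, r)$ sits in a near-Euclidean $N$-chart. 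Summing over $j$ delivers the upper bound $\cH^n(\Gamma_{y,r}) \leq C(n) k^{n/2} K \max_j \deg f_j \cdot r^n$. The main obstacle is the first step: arranging $\lambda$ so that both the uniform normal-neighborhood radius for $f_1$ and the near-isometry of all charts are achieved simultaneously; once this is done, everything reduces cleanly to Proposition \ref{prop:Gromov_interpretation_Euclidean} and the area formula for quasiregular maps.
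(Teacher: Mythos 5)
Your overall strategy — reduce to Proposition \ref{prop:Gromov_interpretation_Euclidean} through bi-Lipschitz charts, using a Lebesgue-number-type scale $\lambda$ — is the paper's approach, and most of your argument is sound. There is, however, one genuine gap in how you construct $\lambda$, and your upper-bound route differs from the paper's.

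The gap: you assert that, for a uniform $\lambda>0$, every $x\in M$ has a \emph{normal neighborhood} $U(x,f_1,\lambda)$ of that fixed radius, and that this ``follows from compactness of $M$, which uniformizes the radius.'' This is false. The normal radius of a quasiregular map is not lower semicontinuous: approaching the branch set it can degenerate to zero. (For the winding map $z\mapsto z^2$, the normal radius at $z\neq 0$ is of order $|z|^2$, which tends to $0$ as $z\to 0$.) So no uniform $\lambda$ can make every $U(x,f_1,\lambda)$ a normal neighborhood. The paper avoids this by \emph{not} requiring normality at scale $\lambda$: it first builds a finite chart cover $\cM$ of $M$, then a finite chart cover $\cN$ of $N$ with the property that each component of $f_1^{-1}V$, for $(V,\psi)\in\cN$, lies inside one chart of $\cM$, and finally takes $\lambda$ to be a Lebesgue number of $\cN$ \emph{on $N$}. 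The resulting condition is only that every component of $f_1^{-1}(B_N(z,\lambda))$ lies inside an $\cM$-chart — which is strictly weaker than being a normal neighborhood. Fortunately, your lower-bound argument never actually invokes normality of $U(x_0,f_1,\lambda)$; it only uses the chart containment and the nesting $U(x_0,f_1,r)\Subset U(x_0,f_1,\lambda)$ (which holds for components of preimages of nested balls regardless of normality). So the gap is in the statement of your third condition on $\lambda$, and is repaired by replacing it with the paper's weaker chart-containment condition.

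On the upper bound: the paper obtains both Ahlfors bounds from a single application of the Euclidean Proposition, by building one chart $\varphi$ that injectively embeds \emph{all} components of $g^{-1}Q_\lambda$ simultaneously into $\R^n$ (the $\varphi_i$ are chosen with disjoint images, so the union is a legitimate Euclidean domain). Your route is different: you apply the Euclidean Proposition only for the lower bound, working on a single component near a chosen $x_0$, and for the upper bound you bypass the Proposition entirely and run the area formula $\cH^n(\Gamma_{y,r})\le\int_{g^{-1}\Gamma_{y,r}}|J_g|\le C k^{n/2-1}K\sum_j\int J_{f_j}$ followed by the change-of-variables estimate $\int J_{f_j}\le \deg f_j\cdot\cH^n(\proj_j\Gamma_{y,r})$. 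This is essentially the argument the paper uses for its global Lemma \ref{lem:global_manifold_hausdorff_bound}, pushed down to the local scale; it is correct, arguably cleaner, and sidesteps the multi-component chart construction. Both routes give the claimed constants once the $\lambda$-construction is corrected.
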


\begin{proof}
Let $\cM$ be a finite cover of $M$ by smooth 2-bilipschitz charts $(U, \varphi)$ of $M$. For each $x\in M$, there exists a radius $r_x>0$ having the property that, for each $r\in (0,r_x)$, $U(f_1,x,r_x)$ is a normal neighborhood of $x$ with respect to $f_1$ satisfying $f(U(f_1,x,r)) = B_N(f(x),r)$. Thus there exists a finite cover $\cN$ of $N$ by smooth 2-bilipschitz charts $(V, \psi)$ with the property that, for each $(V,\psi)\in \cN$, each component of $f_1^{-1}V$ is contained in an element of $\cM$.

Let $\lambda > 0$ be a Lebesgue number of $\cN$, that is, for every $y \in N$, we have $B_n(y, \lambda) \subset V$ for some $(V, \psi) \in \cN$. Note that $\lambda$ depends only on the first map $f_1$, and neither on $k$ nor the remaining maps $f_j$. 

Let $x \in M$, $y = g(x)$, and $0 < r < \lambda/4$. We first consider the cube of balls $Q_\lambda = B_N(f_1(x), \lambda) \times \ldots \times B_N(f_k(x), \lambda)$. Then $B_{N^k}(y, r) \subset Q_\lambda$ and, for every $j \in \{1, \ldots, k\}$, we may fix a chart $(V_j, \psi_j) \in \cN$ for which $B_N(f_j(x), \lambda) \subset V_j$. Let also $\sigma = \psi_1 \times \ldots \times \psi_k \colon Q_\lambda \to \R^{kn}$ be a $2$-bilipschitz embedding.
	
We note that $g^{-1} Q_\lambda \subset f_1^{-1} V_1$. Since every component of $f_1^{-1} V_1$ is contained in a chart of $\cM$, there exists a partition $\{W_i\}_{i\in I}$ of $g^{-1}Q_\lambda$ into open sets $W_i \subset U_i$, where $(U_i,\varphi_i)\in \cM$ for each $i\in I$. Since we may further assume that the images of $\varphi_i \colon U_i \to \R^n$ are mutually disjoint, the map $\varphi \colon g^{-1}Q_\lambda \to \R^n$, defined by $\varphi|W_i = \varphi_i|W_i$ for each open set $W_i$, is a locally $2$-bilipschitz embedding.

We set now $\Omega = \varphi(g^{-1} Q_\lambda)$ and let $g' =(f'_1,\ldots, f'_k) \colon \Omega \to \R^{kn}$ be the map $g' = \sigma \circ g \circ \varphi^{-1}$. Then $f'_j = \psi_j \circ f_j \circ \varphi^{-1}$ for each $j\in \{1,\ldots, k\}$. Since $\varphi^{-1}$ and each $\psi_j$ is locally 2-bilipschitz, the maps $f'_j$ are $2^{4n} K$-quasiregular. 
	
	We are therefore in position to apply Proposition \ref{prop:Gromov_interpretation_Euclidean} on $g'$. We denote $\Gamma'_{y, t} = \sigma(\Gamma \cap Q_\lambda) \cap B^{kn}(\sigma(y), t)$ for $t > 0$, and obtain a constant $C=C(n)>0$ depending only on $n$ for which 

\begin{align*}
\cH^n\left(\Gamma'_{y, t} \right) \leq C(n) k^\frac{n}{2} K \left(\max_i \deg f_i\right) t^n
\end{align*}
and
\[
\cH^n\left(\Gamma'_{y, t} \right) \geq \left(\frac{1}{\min_i \deg f_i}\right)^{n} \left(\frac{1}{C(n) k^\frac{n}{2} K}\right)^{n-1} t^n
\]
for each $t > 0$ satisfying $(g')^{-1}B^{kn}(\sigma(y), t) \Subset \Omega$.

Since $\sigma$ is a $2$-bilipschitz embedding, we have 
\[
B^{kn}(\sigma(y), r/2) \subset \sigma(B_{N^k}(y,r)) \subset B^{kn}(\sigma(y), 2r).
\]
Therefore,
\[
2^{-n} \cH^n(\Gamma'_{y,r/2}) \leq \cH^n(\Gamma_{y,r}) \leq 2^n \cH^n(\Gamma'_{y,2r}).
\]

It suffices now to show that $(g')^{-1} B^{kn}(\sigma(y), 2r)$ is compactly contained in $\varphi(U)$. For this, note first that $\sigma^{-1} B^{kn}(\sigma(y), 2r) \subset \overline{B}_{N^k}(y, 4r)$. Since $g^{-1} \overline{B}_{N^k}(y, 4r)$ is a closed subset of the closed manifold $M$, it is compact. Since $\overline{B}_{N^k}(y, 4r) \subset Q_\lambda$, we have that $g^{-1} \overline{B}_{N^k}(y, 4r) \subset g^{-1} Q_\lambda$. Thus $(g')^{-1} B^{kn}(\sigma(y), 2r)$ is contained in the compact subset $\varphi( g^{-1} \overline{B}_{N^k}(y, 4r))$ of $\Omega$. 
\end{proof}

\subsection{Large scale estimates}

In order to prove Theorem \ref{thm:Gromov_interpretation}, it remains to extend the estimate of Lemma \ref{lem:manifold_small_scale} to the radii $r$ satisfying $\lambda/4 \leq r \leq \diam \Gamma$.

The following lemma completes the proof of the Ahlfors lower bound in Theorem \ref{thm:Gromov_interpretation}.

\begin{lemma}\label{lem:manifold_large_scale}
Let $M$ and $N$ be closed, connected, oriented Riemannian $n$-manifolds, and let $f_1, \ldots, f_k \colon M \to N$ be non-constant $K$-quasiregular maps $M \to N$. Let also $g =(f_1,\ldots, f_k) \colon M \to N^k$ and $\Gamma = g(M)$. Then there exists a constant $C=C(n,f_1,M,N)>0$, depending only on $n$, $f_1$, $M$, and $N$, with the property that, for each $y \in \Gamma$ and all $r\in (0,\diam \Gamma)$, 
\[
\cH^n(\Gamma_{y,r}) \geq \frac{1}{\left(C k^{\frac{n}{2}} K\right)^{(n-1)} \left(\min_j \deg f_j\right)^{n}} r^n.
\]
\end{lemma}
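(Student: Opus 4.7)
I would prove the lemma by case analysis on the magnitude of $r$, with Lemma \ref{lem:manifold_small_scale} as the main building block.

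For $r \in (0, \lambda/4)$, the bound follows directly from Lemma \ref{lem:manifold_small_scale}, with $C$ coming from the dimensional constant $C(n)$ in that lemma.

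For $r \in [\lambda/4, \diam \Gamma)$, I would employ a packing-plus-summation argument. Specifically, let $\{y_i\}_{i \in I}$ be a maximal $(\lambda/4)$-separated subset of $B_\Gamma(y, r - \lambda/8)$ (in the ambient metric on $N^k$). Then the open balls $B_\Gamma(y_i, \lambda/8)$ are pairwise disjoint and contained in $\Gamma_{y, r}$. Applying Lemma \ref{lem:manifold_small_scale} at radius $\lambda/8$ around each $y_i$ and summing, I would obtain
\[
\cH^n(\Gamma_{y, r}) \;\geq\; |I| \cdot \frac{(\lambda/8)^n}{(C(n)\, k^{n/2} K)^{n-1}(\min_j \deg f_j)^n}.
\]

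The key remaining task is to establish a lower bound of the form $|I| \geq c_1 (r/\lambda)^n$ with $c_1 = c_1(n, M, N, f_1)$. For this I would pull back via $g$: the open set $g^{-1}(B_\Gamma(y, r - \lambda/8)) \subset M$ admits a packing by disjoint balls $B_M(x_j, \rho)$ with $\rho$ chosen small enough that each $g$-image has diameter at most $\lambda/4$. Since $M$ is Ahlfors $n$-regular with constants depending only on $M$, the number of such packing balls is bounded below by $c\, \vol_M(g^{-1}(B_\Gamma(y, r - \lambda/8)))/\rho^n$, and their $g$-images in $\Gamma_{y, r-\lambda/8}$ yield a $(\lambda/4)$-separated collection of size at least this divided by the multiplicity $N(g) \leq \min_j \deg f_j$. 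The Riemannian volume of the preimage is controlled using the area formula (Lemma \ref{lem:area_thm}) together with Minkowski's determinant inequality $|J_g|^{2/n} \geq \sum_j J_{f_j}^{2/n}$, which provides the $r^n$-scaling with the appropriate $k^{-n/2}$ factor.

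The main obstacle is precisely this last step. A naive monotonicity bound $\cH^n(\Gamma_{y, r}) \geq \cH^n(\Gamma_{y, \lambda/8})$ combined with the diameter estimate $r \leq \diam \Gamma \leq \sqrt{k}\, \diam N$ leaves a residual factor of $k^{n/(2(n-1))}$ in $C$ for all $n \geq 2$, which is incompatible with the lemma's claim that $C$ depends only on $n, M, N, f_1$. Eliminating this factor requires exploiting the intrinsic $n$-dimensional structure of $\Gamma = g(M)$ via the packing argument above, and the delicate point is controlling the local Lipschitz-type behavior of $g$ in terms of the quasiregular distortion of its components $f_j$ so as to correctly relate Riemannian volumes on $M$ to the product-metric Hausdorff measure on $\Gamma$.
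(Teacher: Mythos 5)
The paper's own proof of this lemma is exactly the ``naive monotonicity bound'' you dismiss: for $\lambda/4 \le r \le \diam\Gamma$ it sets $\cH^n(\Gamma_{y,r}) \ge \cH^n(\Gamma_{y,\lambda/8})$, applies Lemma \ref{lem:manifold_small_scale} at radius $\lambda/8$, and then uses $r \le \sqrt{k}\,\diam N$ to re-insert $r^n$ at the cost of an extra factor $k^{n/2}$. Your arithmetic is correct that this does \emph{not} yield the lemma as literally stated with $\bigl(Ck^{n/2}K\bigr)^{n-1}$ in the denominator; what it yields is $Ck^{n^2/2}K^{n-1}$, i.e.\ one additional factor of $k^{n/2}$ beyond $k^{n(n-1)/2}$. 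But this is exactly the exponent that appears in the lower bound of Theorem \ref{thm:Gromov_interpretation}, and the latter is what is actually cited and used downstream (it is quoted in the proof of Theorem \ref{thm:Gromov-original} via $(k+1)^{n^2/2}$). So the discrepancy you spotted is real, but its resolution is a typo in the lemma statement, not a missing idea in the proof: the denominator should read $Ck^{n^2/2}K^{n-1}(\min_j\deg f_j)^n$, and then the naive monotonicity argument closes cleanly and is what the authors intended.

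Your packing argument, by contrast, does not close. The step where you choose $\rho$ ``small enough that each $g$-image has diameter at most $\lambda/4$'' requires a modulus of continuity for $g=(f_1,\ldots,f_k)$ that is uniform on $g^{-1}(B_\Gamma(y,r-\lambda/8))$; but no such modulus is available that is independent of $k$ and of the individual maps $f_2,\ldots,f_k$. In the intended application the $f_j$ are iterates $f^j$ whose degrees grow geometrically, so the inner distance each $f_j$ contracts a ball to a given size shrinks with $j$, and a $\rho$ that works for all $j\le k$ must degrade as $k\to\infty$. Since the resulting packing cardinality scales like $\vol_M(\cdot)/\rho^n$, a $k$-dependent $\rho$ re-introduces precisely the $k$-dependence you were trying to eliminate. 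A second gap: being able to cover an open set by disjoint balls up to a null set (Vitali) does not give a \emph{lower} bound $|I|\gtrsim \vol/\rho^n$ for balls of a \emph{fixed} radius $\rho$; that count can be arbitrarily small for thin sets. So the claimed lower bound on $|I|$ is not justified. The simplest correct route is the paper's: accept the extra $k^{n/2}$ factor, which is harmless for the entropy application since $\tfrac{1}{k}\log k^{n^2/2}\to 0$.
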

\begin{proof}
Let $\lambda>0$ be as in Lemma \ref{lem:manifold_small_scale}. It suffices to consider radii $\lambda/4 \leq r \leq \diam \Gamma$. 

Since $\diam \Gamma \leq \diam N^k = k^{1/2} \diam N$, we have that $r/(k^{1/2} \diam N) \leq 1$ and $4r/\lambda \geq 1$. Now, by Lemma \ref{lem:manifold_small_scale}, there exist constants $C=C(n,\lambda)>0$ and $C'=C'(n,\lambda,\diam N)>0$ for which 
\begin{align*}
\cH^n\left(\Gamma_{y, r} \right) 
&\geq \cH^n\left(\Gamma_{y, \lambda/8} \right)\\
&\geq C(n, \lambda)^{-1} k^{-\frac{n(n-1)}{2}} K^{-(n-1)} \left(\min_i \deg f_i\right)^{-n} \\
&\geq C'(n, \lambda, \diam N)^{-1} k^{-\frac{n(n-1)}{2}} K^{-(n-1)} \left(\min_i \deg f_i\right)^{-n} r^n. 
\end{align*}
Hence, we have obtained the lower bound of Theorem \ref{thm:Gromov_interpretation}. Moreover, since $\lambda$ only depends on $f_1$ and the Riemannian metrics on $M$ and $N$, we have that $C'(n, \lambda, \diam N)$ only depends on $n$, $f_1$, $M$, and $N$, and not on $k$ or the other maps $f_i$.
\end{proof}

For the upper bound, a similar observation as in the proof of the lower bound yields
\begin{align*}
\cH^n\left(\Gamma_{y, r} \right) \leq \cH^n\left(\Gamma \right)
\leq \frac{4^n}{\lambda^n} \cH^n\left(\Gamma \right) r^n.
\end{align*}
Hence, the problem of the upper bound reduces to estimating the Hausdorff measure $\cH^n$ of the entire set $\Gamma$, and hence to a global counterpart of Lemma \ref{lem:upper_hausdorff_bound} on closed manifolds. We state this as follows.

\begin{lemma}\label{lem:global_manifold_hausdorff_bound}
Let $M$ and $N$ be closed, connected, oriented Riemannian $n$-manifolds, and let $f_1, \ldots, f_k \colon M \to N$ be non-constant $K$-quasiregular maps $M \to N$. Let also $g =(f_1,\ldots, f_k) \colon M \to N^k$ and $\Gamma = g(M)$.
Then there exists a constant $C=C(n)>0$, depending only on $n$, for which
\begin{equation}\label{eq:gloabl_manifold_hausdorff_bound}
\cH^n\left(\Gamma \right) \leq C k^\frac{n}{2} K \left(\max_j \deg f_j\right) \cH^n\left(N\right).
\end{equation}
\end{lemma}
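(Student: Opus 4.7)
\textbf{Proof plan for Lemma \ref{lem:global_manifold_hausdorff_bound}.} The strategy is to globalize the argument used in Lemma \ref{lem:upper_hausdorff_bound}. I will replace the single projection $\proj_j$ used there (which worked in the Euclidean setting because $\proj_j(U) \subset \R^n$ had bounded Hausdorff measure) with the global measure $\cH^n(N)$ coming from the degree formula on closed manifolds.

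First, I would reduce to a covering of $M$ by finitely many smooth bilipschitz charts $(U_i, \varphi_i)$ and similarly cover $N^k$ by bilipschitz boxes of product charts, so that the Euclidean area formula of Lemma \ref{lem:area_thm} and the pointwise $n$-Jacobian bound of Lemma \ref{lem:area_pontwise_estimate} transfer to $g \colon M \to N^k$ at the cost of a multiplicative constant depending only on $n$ and the bilipschitz constants of the chosen charts (which can in turn be absorbed into the final constant since $M$ and $N$ are fixed). The outcome is the manifold version of Lemma \ref{lem:area_thm},
\[
\int_{M} |J_g|\,\dd \cH^n \;\ge\; \int_{\Gamma} N(g,y,M)\,\dd \cH^n(y) \;\ge\; \cH^n(\Gamma),
\]
using $N(g,y,M)\ge 1$ for all $y\in\Gamma$, together with the pointwise estimate
\[
|J_g|(x) \le n^{n/2}\,K\,k^{\,n/2-1}\sum_{j=1}^{k} J_{f_j}(x)
\]
for $\cH^n$-a.e.\ $x\in M$, exactly as in Lemma \ref{lem:area_pontwise_estimate}.

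Second, I would evaluate $\int_M J_{f_j}\,\dd\cH^n$ for each non-constant quasiregular map $f_j\colon M\to N$. Since $J_{f_j}\vol_M = f_j^\ast \vol_N$ and $f_j$ has degree $\deg f_j$ between closed oriented Riemannian $n$-manifolds (so that $\int_M f_j^\ast \vol_N = (\deg f_j)\int_N \vol_N$), one obtains the global Jacobian identity
\[
\int_M J_{f_j}\,\dd \cH^n \;=\; (\deg f_j)\,\cH^n(N).
\]
Summing over $j$ and using $k \cdot \max_j \deg f_j \ge \sum_j \deg f_j$ gives
\[
\sum_{j=1}^{k}\int_M J_{f_j}\,\dd\cH^n \;\le\; k\,\bigl(\max_j \deg f_j\bigr)\,\cH^n(N).
\]

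Combining these two steps yields
\[
\cH^n(\Gamma) \;\le\; n^{n/2}\,K\,k^{\,n/2-1}\cdot k\bigl(\max_j \deg f_j\bigr)\cH^n(N) \;=\; n^{n/2}\,k^{\,n/2}\,K\,\bigl(\max_j \deg f_j\bigr)\,\cH^n(N),
\]
which is \eqref{eq:gloabl_manifold_hausdorff_bound} with $C=C(n)=n^{n/2}$ (up to the constants absorbed in the chart reduction). The only subtle point in the argument is verifying that Hajłasz's Sobolev area formula and the Jacobian identity $\int_M J_{f_j} = (\deg f_j)\cH^n(N)$ carry over from the Euclidean to the Riemannian setting; these are well known, but one has to be careful that the Lusin (N) property established for $g$ in Lemma \ref{lem:Lusin_n} (which is local and so extends to manifolds) is what allows the area formula to apply to the representative $g$ itself rather than a Sobolev modification, exactly as recorded after Lemma \ref{lem:area_thm}.
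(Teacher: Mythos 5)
Your overall strategy—localize to charts, apply the Euclidean area formula and pointwise $n$-Jacobian bound, then sum up and invoke $\int_M J_{f_j}\,\dd\cH^n = (\deg f_j)\,\cH^n(N)$—is the same as the paper's. The final computation (summing over $j$, using $k\max_j\deg f_j \ge \sum_j\deg f_j$) also matches.

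However, there is a genuine gap in the reduction step, and it is precisely the part you label ``the only subtle point'' and then dismiss as well known. You propose ``a covering of $M$ by finitely many smooth bilipschitz charts $(U_i,\varphi_i)$ and \dots $N^k$ by bilipschitz boxes of product charts,'' implying that once these are fixed, the area formula transfers automatically. But to apply Lemma \ref{lem:area_thm} on a piece $U_i\subset M$, you need $g(U_i)$ to land inside a \emph{single} box chart $V_1\times\cdots\times V_k$ of $N^k$, i.e.\ you need $f_j(U_i)\subset V_j$ for \emph{every} $j\in\{1,\dots,k\}$ simultaneously. Since $f_2,\dots,f_k$ are arbitrary $K$-quasiregular maps with no control on their modulus of continuity uniform in $k$, no fixed finite cover of $M$ can achieve this; the pieces must be chosen adaptively, with diameters depending on all of $f_1,\dots,f_k$, and the resulting decomposition is countable, not finite. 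This is why the authors explicitly remark that they could not locate a suitable manifold version of the Sobolev area formula and instead run the argument through the Vitali covering theorem (Theorem \ref{thm:Vitali_on_mflds}): they set $r_x=\sup\{r>0: g(B_M(x,r))\subset B_{N^k}(g(x),\lambda/4)\}$, extract a countable disjoint family of balls covering $M$ up to a null set, apply the Euclidean estimates ball by ball (where the $N^k$-side charts now do fit), sum using disjointness to get $\sum_B\int_B J_{f_j}\le\int_M J_{f_j}$, and handle the complementary null set via Lusin (N). Your proof as written omits this adaptive decomposition and the disjointness it provides, and so does not actually establish the claimed ``manifold version of Lemma \ref{lem:area_thm}.''
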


The upper bound for the Hausdorff measure in Theorem \ref{thm:Gromov_interpretation} follows now almost immediately using Lemma \ref{lem:global_manifold_hausdorff_bound} and the same observation as in the proof of the lower bound. We record the final piece of the proof of Theorem \ref{thm:Gromov_interpretation}.

\begin{proof}[Proof of Theorem \ref{thm:Gromov_interpretation}]
By Lemma \ref{lem:manifold_large_scale}, it remains to show that, there exists a constant $C>0$ depending only on $n$, $M$, $N$, and $f_1$ for which
\begin{equation}
\label{eq:devnull}
\cH^n(\Gamma(y,r)) \le C k^{\frac{n}{2}}K \left(\max_j \deg f_j\right) r^n.
\end{equation}
Let $\lambda>0$ be as in Lemma \ref{lem:manifold_small_scale}.

We consider two cases. By Lemma \ref{lem:manifold_small_scale}, there exists a constant $C'=C'(n)>0$ depending only on $n$ for which \eqref{eq:devnull} holds with $C'$ for $r\in (0,\lambda/4)$.

Suppose now that $r\ge \lambda/4$. Then by Lemma \ref{lem:global_manifold_hausdorff_bound} there exists a constant $C''=C''(n)$, depending only on $n$, for which 
\begin{align*}
\cH^n\left(\Gamma_{y, r} \right)
&\leq \cH^n\left(\Gamma \right)
\leq \frac{4^n}{\lambda^n} \cH^n\left(\Gamma \right)\cdot r^n \\
&\le\frac{4^n}{\lambda^n} \cdot C'' \cdot\cH^n(N)\cdot 
k^{\frac{n}{2}}K\Bigl(\max_{j}\deg f_j\Bigr)\cdot r^n \\
&=C''' k^{\frac{n}{2}}K\Bigl(\max_{j}\deg f_j\Bigr)\cdot r^n,
\end{align*}
where the constant $C'''$ depends only on $n$, $\lambda$, and $N$. Since $\lambda$ depends only on $f_1$ and the Riemannian metrics on $M$ and $N$, it suffices to take the maximum of the obtained constants $C'$ and $C'''$.
This completes the proof of Theorem \ref{thm:Gromov_interpretation}.
\end{proof}

It remains to prove Lemma \ref{lem:global_manifold_hausdorff_bound}. Since we were unable to locate a suitable version of the area formula for continuous Sobolev maps between closed manifolds, we give a hands-on proof based on the area formula for Sobolev functions in charts. For this reason, we begin by recalling a version of the Vitali covering theorem.

\begin{thm}\label{thm:Vitali_on_mflds}
Let $M$ be a Riemannian $n$-manifold and, for every $x \in M$, let $r_x > 0$. Then there exists an at most countable collection of disjoint open balls $\cB = \{B_1, B_2, \ldots\}$ for which every ball $B_i=B_M(x_i,r_i)$ in the collection satisfies $r_i < r_{x_i}$ and the set $M \setminus \cup \cB$ has $\Hausd^n$-measure zero.
\end{thm}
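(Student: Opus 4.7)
The plan is to reduce the statement to the classical Vitali covering theorem in Euclidean space by means of bilipschitz charts, and then patch the local conclusions across a countable atlas by an inductive greedy construction.

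Since $M$ is second-countable and every point of $M$ admits an open neighborhood which is bilipschitz equivalent to an open subset of $\R^n$ (via normal coordinates, say), $M$ can be covered by a countable family of precompact open sets $\{U_k\}_{k\in\N}$, each the domain of a bilipschitz chart $\varphi_k\colon U_k\to V_k\subset\R^n$ with some constant $L_k\ge 1$. I would build $\cB$ inductively over $k$. Suppose disjoint countable collections $\cB_1,\ldots,\cB_{k-1}$ of metric balls with admissible radii $r<r_x$ have been chosen, with every ball in $\cB_j$ contained in $U_j$. Set
\[
W_k := U_k\setminus\overline{\bigcup_{j<k}\bigcup \cB_j},
\]
which is open. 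Consider the fine family
\[
\cF_k := \{B_M(x,r):x\in W_k,\ 0<r<r_x,\ B_M(x,r)\subset W_k\};
\]
this is a Vitali-type cover of $W_k$, since admissible radii may be taken arbitrarily small. Transferring $\cF_k$ through $\varphi_k$, each image $\varphi_k(B_M(x,r))$ is sandwiched between the Euclidean balls $B^n(\varphi_k(x),r/L_k)$ and $B^n(\varphi_k(x),L_k r)$, so the family has bounded eccentricity. The classical Vitali covering theorem for Radon measures in $\R^n$ (e.g.\ \cite[Theorem 2.8.18]{Federer}) then produces a countable disjoint subfamily $\cB_k\subset\cF_k$ covering $W_k$ up to a Lebesgue-null set in the chart; since bilipschitz maps preserve $\Hausd^n$-null sets, $W_k\setminus\bigcup\cB_k$ is $\Hausd^n$-null in $M$.

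Finally, I would set $\cB:=\bigcup_k\cB_k$. The collection is countable and pairwise disjoint by construction (any ball in $\cB_k$ avoids $\overline{\bigcup_{j<k}\bigcup\cB_j}$). To verify the covering property, a point $x\in M\setminus\bigcup\cB$ lies in some $U_k$; letting $k_0$ be the smallest such index, $x$ either lies in the null remainder $W_{k_0}\setminus\bigcup\cB_{k_0}$, or on the topological boundary $\partial\bigl(\bigcup_{j<k_0}\bigcup\cB_j\bigr)$. The latter is a countable union of metric spheres $\partial B_M(x_i,r_i)$, each $\Hausd^n$-null as a bilipschitz image of a Euclidean $(n-1)$-sphere. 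Thus $M\setminus\bigcup\cB$ is a countable union of $\Hausd^n$-null sets, hence null.

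The main (and really only) technical obstacle is the Euclidean step: applying the Vitali covering theorem to a family whose elements are images of intrinsic metric balls under $\varphi_k$, which are not exact Euclidean balls. This is routine once one notes that bilipschitz images of balls satisfy a uniform bounded-eccentricity condition, so any of the standard versions of the theorem (for Radon measures and fine covers by ``nice'' sets) applies directly. Everything else is bookkeeping.
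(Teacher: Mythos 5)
Your route is genuinely different from the paper's: the paper invokes Federer's metric-space Vitali theorem (\cite[Theorem 2.8.18, Section 2.8.9]{Federer}) directly on $M$ (where $\Hausd^n$ is a locally doubling Radon measure, so the hypotheses are met), then passes from closed to open balls using that small metric spheres are null. You instead reduce to the Euclidean case chart by chart and patch with a greedy induction. That is a legitimate and more elementary strategy, but as written it has a gap in the final step.

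The problem is the claim that $\partial\bigl(\bigcup_{j<k_0}\bigcup\cB_j\bigr)$ is a countable union of the spheres $\partial B_M(x_i,r_i)$. For a countable family of disjoint open balls, $\overline{\bigcup_i B_i}$ is in general strictly larger than $\bigcup_i\overline{B_i}$: accumulation points of infinitely many shrinking balls contribute extra boundary points that lie on no individual sphere, and a priori such a set need not be null. So the ``countable union of spheres'' justification fails, and with it your proof that the uncovered set is null.

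Fortunately the construction is salvageable with a small change of bookkeeping; you should track the chart boundaries rather than the ball boundaries. If $x\in M\setminus\bigcup\cB$ and $k_0$ is the smallest index with $x\in U_{k_0}$, then either $x\in W_{k_0}\setminus\bigcup\cB_{k_0}$ (null by the Vitali step in chart $k_0$), or $x\notin W_{k_0}$, hence $x\in\overline{\bigcup_{j<k_0}\bigcup\cB_j}\subset\overline{\bigcup_{j<k_0}U_j}$; by minimality $x\notin\bigcup_{j<k_0}U_j$, and since this is a \emph{finite} union of open sets, $x\in\partial\bigl(\bigcup_{j<k_0}U_j\bigr)\subset\bigcup_{j<k_0}\partial U_j$. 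Thus
\[
M\setminus\bigcup\cB\ \subset\ \bigcup_{k}\bigl(W_k\setminus\bigcup\cB_k\bigr)\ \cup\ \bigcup_{j}\partial U_j,
\]
which is null provided you choose each chart domain $U_j$ with $\Hausd^n$-null boundary (e.g.\ smoothly bounded precompact domains), a requirement you should add explicitly since a generic precompact open set can have boundary of positive measure. With this correction your proof goes through; I would also note that what you need in $\R^n$ is the Vitali covering theorem for fine covers by sets of uniformly bounded eccentricity (images of metric balls under the $L_k$-bilipschitz chart), not the statement for Euclidean balls verbatim, though this is indeed covered by the Federer result you cite.
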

\begin{proof}
	A version for closed balls follows from Federer \cite[Theorem 2.8.18 and Section 2.8.9]{Federer} (see also Heinonen \cite[Example 1.15 (c) and (f)]{Heinonen-book}). An open ball version follows since every small enough closed ball on $M$ has a boundary of measure zero.
\end{proof}

We are now ready for the proof of Lemma \ref{lem:global_manifold_hausdorff_bound}.
\begin{proof}[Proof of Lemma \ref{lem:global_manifold_hausdorff_bound}]
For each $x \in M$, let 
\[
r_x = \sup\{r > 0 : g(B_M(x, r)) \subset B_{N^k}(g(x), \lambda/4)\}.
\]
Since $g$ is continuous, we have $r_x > 0$ for every $x\in M$. Let $\cB$ be a countable family of balls as in the Vitali covering theorem \ref{thm:Vitali_on_mflds}.

Let $B \in \cB$. By the same construction as in Lemma \ref{lem:manifold_small_scale}, we obtain $2$-bilipschitz embeddings $\varphi \colon B \to \R^n$ and $\sigma = \psi_1 \times \cdots \times \psi_k \colon g(B) \to \R^{kn}$, where mappings $\psi_j$ are smooth $2$-bilipschitz charts on $N$. Let also again $g' = (f'_1,\ldots, f'_k) \colon \varphi(B) \to \R^{kn}$ be the map with $2^{4n}K$-quasiregular component functions $f_j' = \psi_j \circ f_j \circ \varphi^{-1}$ for $j\in \{1,\ldots, k\}$.

Hence, we may use Lemmas \ref{lem:area_thm} and \ref{lem:area_pontwise_estimate} to obtain a constant $C=C(n)>0$, depending only on $n$, for which

	\begin{align*}
		\cH^n(\sigma(g(B)))
		&\leq \int_{\sigma(g(B))} N(g', y', \varphi(B)) \dd\cH^n(y')\\
		&\leq C(n) k^{\frac{n}{2}-1} K \sum_{j=1}^{k } \int_{\varphi(B)} J_{f_j'}(x') \dd \cH^n(x').
	\end{align*}
	Since $\sigma$ is a 2-bilipschitz embedding, we have
	\[
		\cH^n(g(B)) \leq 2^n \cH^n(\sigma(g(B))).
	\]
	Moreover, we may also estimate
	\begin{align*}
		\int_{\varphi(B)} J_{f_j'}(x') \dd \cH^n(x')
		&= \int_{\varphi(B)} J_{\psi_j \circ f_j}(\varphi^{-1}(x')) J_{\varphi^{-1}}(x') \dd \cH^n(x')\\
		&= \int_B J_{\psi_j \circ f_j}(z) \dd \cH^n(z)
		= \int_B J_{\psi_j}(f_j(z)) J_{f_j}(z) \dd \cH^n(z)\\
		&\leq 2^n \int_B J_{f_j}(z) \dd \cH^n(z).
	\end{align*}
	Now, by combining these estimates for all $B \in \cB$ and absorbing the constants into $C(n)$, we obtain
	\begin{align*}
		\cH^n(g(\cup \cB)) 
		&\leq C(n) k^{\frac{n}{2}-1} K \sum_{j=1}^{k } \int_{\cup\cB} J_{f_j} \dd\cH^n\\
		&\leq C(n) k^{\frac{n}{2}} K \left(\max_j \int_{M} J_{f_j} \dd\cH^n \right)\\
		&= C(n) k^{\frac{n}{2}} K \left(\max_j \deg f_j \right) \cH^n(N).
	\end{align*}
Finally, since $g$ satisfies the Lusin condition, we have that $g(\cup \cB)$ has full $\cH^n$-measure in $\Gamma$, and the claim follows.
\end{proof}



\section{The entropy upper bound: Proof of Theorem \ref{thm:main_result}}
\label{sec:Gromov-proof}

In this section, we conclude the proof of the entropy equality $h(f) = \log \deg f$. 
We give first the entropy upper bound in the case of quasiregular self-maps and then finish the proof of Theorem \ref{thm:main_result}. The argument is otherwise the same as in \cite[Chapter 5]{Gromov-2003}. 

In the following theorem, we use the notation $K(f)$ for the smallest distortion constant of the quasiregular map $f\colon M\to M$.

\begin{thm}\label{thm:Gromov-original}
Let $f \colon M \to M$ be a $K$-quasiregular self-map on a closed, oriented, and Riemannian $n$-manifold $M$. Then
\[
h(f) \le \log \deg f + n \cdot \limsup_{k \to \infty} \frac{\log K(f^k)}{k} \le \log \deg f + n\log K.
\]
\end{thm}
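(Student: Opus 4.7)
The plan is to apply the Ahlfors regularity estimates of Theorem~\ref{thm:Gromov_interpretation} to the $k$-chains of the graph $\Gamma = \Gamma_{(\id_M,f)}$ of $f$, and then invoke Theorem~\ref{lem:lov-lodn-bound} to pass to an entropy bound. The starting observation is the identification
\[
	\chain_k(\Gamma) = g_k(M), \qquad g_k := (\id_M, f, f^2, \ldots, f^k) \colon M \to M^{k+1},
\]
which places $\chain_k(\Gamma)$ in exactly the setting of Theorem~\ref{thm:Gromov_interpretation}, with $k+1$ components each $K_k$-quasiregular for $K_k := \max_{0 \le j \le k} K(f^j)$, of degrees $1, \deg f, (\deg f)^2, \ldots, (\deg f)^k$.

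For the upper bound on $\lov(\Gamma)$, I would apply Lemma~\ref{lem:global_manifold_hausdorff_bound} to obtain
\[
	\Hausd^n(\chain_k(\Gamma)) \le C(k+1)^{n/2} K_k (\deg f)^k \Hausd^n(M),
\]
from which $\lov(\Gamma) \le \log \deg f + \limsup_k (\log K_k)/k$ follows after taking the $\limsup_k$ of $(1/k)\log(\cdot)$, since the polynomial factor in $k$ is killed by the logarithmic rate. For the lower bound on $\lodn(\Gamma)$, I would apply the lower Ahlfors bound of Theorem~\ref{thm:Gromov_interpretation}, noting that $\min_j \deg f^j = \deg \id_M = 1$ because the identity is one of the components. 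The resulting estimate gives $\lodn(\Gamma) \ge -(n-1) \limsup_k (\log K_k)/k$. Combining through Theorem~\ref{lem:lov-lodn-bound} then yields
\[
	h(f) = h(\Gamma) \le \log \deg f + n \limsup_{k \to \infty} \frac{\log K_k}{k}.
\]
The identification $\limsup_k (\log K_k)/k = \limsup_k (\log K(f^k))/k$ is routine via Fekete's subadditive lemma applied to the submultiplicative sequence $K(f^k)$, and the final weaker inequality follows from $K(f^k) \le K^k$.

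The main technical point I anticipate is a metric reconciliation: $\lov$ and $\Dens_\eps$ are defined via the sup-metric $d_{k,\infty}$ on $M^{k+1}$, whereas Theorem~\ref{thm:Gromov_interpretation} is phrased in the product Riemannian metric. Since these satisfy $d_{k,\infty} \le d_{k,\mathrm{prod}} \le \sqrt{k+1}\, d_{k,\infty}$, they differ by at most a factor polynomial in $k$, which disappears in the logarithmic rate; for the lower Ahlfors bound to transfer in the correct direction, one uses that a product Riemannian ball is contained in the $d_{k,\infty}$-ball of the same radius, so the Ahlfors lower estimate descends to the required lower estimate on $\Dens_\eps$.
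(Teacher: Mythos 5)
Your proposal matches the paper's own argument in every essential respect: the same reduction through Theorem~\ref{lem:lov-lodn-bound}, the same identification $\chain_k(\Gamma_{(\id_M,f)}) = g_k(M)$, the same applications of Theorem~\ref{thm:Gromov_interpretation} and Lemma~\ref{lem:global_manifold_hausdorff_bound}, the observation that $\min_j \deg f^j = 1$ because $\id_M$ is a component, and the same handling of the product-versus-sup-metric issue (the paper uses $B_{M^{k+1}}(y,\eps)\subset D_{k,\infty}(y,\eps)$ to transfer the lower Ahlfors bound to $\Dens_\eps$). The only cosmetic difference is that you introduce $K_k=\max_{0\le j\le k} K(f^j)$ and invoke Fekete's lemma to identify the limsups, whereas the paper writes $K(f^k)$ directly; the resulting bound is identical.
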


\begin{proof}
Let $M$ be a closed, connected, and oriented Riemannian $n$-manifold, 
$n\ge 2$, $K\ge 1$, and let $f\colon M\to M$ be a non-constant $K$-quasiregular self-map. Recall that, by Theorem \ref{lem:lov-lodn-bound}, 
\[
h(f) = h(\Gamma_{(\id_M,f)}) \le \lov(\Gamma_{(\id_M,f)}) - \lodn(\Gamma_{(\id_M,f)}),
\]
where $\Gamma_{(\id_M,f)}=(\id_M,f)(M)\subset M^2$ is the graph of $f$. For each $k\in\bN$, let $g_k := (\id_M, f,f^2,\ldots, f^{k}):M \to M^{k+1}$ and 
\begin{gather*}
\Gamma_{g_k}:=g_k(M) = \chain_k(\Gamma_{(\id_M,f)}). 
\end{gather*}

By Theorem \ref{thm:Gromov_interpretation}, there exists $C=C(n)>0$, depending only on $n$, such that, for each $y\in\chain_k(\Gamma_{(\id_M,f)})$ and $\eps\in(0,\diam M)$, we have
\begin{align*}
\Hausd^n\bigl(\chain_k(\Gamma_{(\id_M,f)}) \cap D_{k,\infty}(y, \eps)\bigr) &\geq 
\Hausd^n\bigl(\chain_k(\Gamma_{(\id_M,f)}) \cap B_{M^{k+1}}(y, \eps)\bigr) \\
&\geq \frac{\eps^n}{C\cdot(k+1)^\frac{n^2}{2} (K(f^k))^{n-1}}.
\end{align*}
Thus 
\begin{equation}
\label{eq:G-lodn}
\begin{split}
- \lodn(\Gamma_{(\id_M,f)})
&\leq \liminf_{\eps \to 0} \limsup_{k \to \infty} \frac{\log \bigl( C\cdot(k+1)^\frac{n^2}{2} (K(f^k))^{n-1} \eps^{-n} \bigr)}{k} \\
&= \liminf_{\eps \to 0} \limsup_{k \to \infty} \left( \frac{n-1}{k}\log K(f^k)\right) \\
&= (n-1) \limsup_{k \to \infty} \frac{\log K(f^k)}{k}.
\end{split}
\end{equation}

On the other hand, we have either by Theorem \ref{thm:Gromov_interpretation} or by Lemma \ref{lem:global_manifold_hausdorff_bound}, that
\[
\Hausd^n(\chain_k(\Gamma_{(\id_M,f)})) \leq C\cdot (k+1)^n K(f^k) (\deg f)^{k}\cdot (\diam M)^n.
\]
Thus
\begin{equation}
\label{eq:G-lov}
\begin{split}
\lov(\Gamma_{(\id_M,f)})
&\leq \limsup_{k \to \infty} \frac{1}{k} 
\log \bigl( C\cdot (k+1)^n K(f^k) (\deg f)^{k} (\diam M)^n \bigr)\\
&= \log \deg f + \limsup_{k \to \infty} \frac{\log K(f^k)}{k}.
\end{split}
\end{equation}

Combining the estimates \eqref{eq:G-lodn} and \eqref{eq:G-lov}, we obtain the upper bound
\begin{gather*}
h(f) \leq \lov(\Gamma_{(\id_M,f)}) - \lodn(\Gamma_{(\id_M,f)})
\le \log \deg f +n \frac{\log(K(f^k))}{k}.
\end{gather*}
Since $K(f^k) \le K^k$, the proof is complete.
\end{proof}

\begin{proof}[Proof of Theorem \ref{thm:main_result}]
The lower bound $h(f) \ge \log \deg f$ follows from the variational principle and the lower bound $h_{\mu_f}(f) \ge \log \deg f$ in Proposition \ref{prop:entropy_lower_bound} for the invariant measure $\mu_f$. Thus it remains to prove the upper bound using the variant of Gromov's argument we discussed in the previous section.
Since $K(f^k) \le K$ for each $k\in \N$, the upper bound $h(f)\le \log \deg f$ follows immediately from Theorem \ref{thm:Gromov-original}.
\end{proof}

\bibliographystyle{abbrv} 
\bibliography{KOPS} 

\begin{thebibliography}{10}

\bibitem{Bihari}
I.~Bihari.
\newblock {A generalization of a lemma of {B}ellman and its application to
  uniqueness problems of differential equations}.
\newblock {\em Acta Math. Acad. Sci. Hungar.}, 7:81--94, 1956.

\bibitem{Bojarski-Iwaniec}
B.~Bojarski and T.~Iwaniec.
\newblock {Analytical foundations of the theory of quasiconformal mappings in
  {${\bf R}^{n}$}}.
\newblock {\em Ann. Acad. Sci. Fenn. Ser. A I Math.}, 8(2):257--324, 1983.

\bibitem{Bonk-Heinonen_Acta}
M.~Bonk and J.~Heinonen.
\newblock {Quasiregular mappings and cohomology}.
\newblock {\em Acta Math.}, 186(2):219--238, 2001.

\bibitem{Bowen1971}
R.~Bowen.
\newblock {Entropy for group endomorphisms and homogeneous spaces}.
\newblock {\em Trans. Amer. Math. Soc.}, 153:401--414, 1971.

\bibitem{Favre-RivieraLetelier_ArchmiedeanDyn}
C.~Favre and J.~Rivera-Letelier.
\newblock Th{\`e}orie ergodique des fractions rationnelles sur un corps
  ultram{\'e}trique.
\newblock {\em Proc. Lond. Math. Soc.}, 100(1):116--154, 2010.

\bibitem{Federer}
H.~Federer.
\newblock {\em {Geometric measure theory}}.
\newblock {Die Grundlehren der mathematischen Wissenschaften, Band 153}.
  Springer-Verlag New York Inc., New York, 1969.

\bibitem{Goldstein-Troyanov_cohomology}
V.~Gol'dshtein and M.~Troyanov.
\newblock {A conformal de {R}ham complex}.
\newblock {\em J. Geom. Anal.}, 20(3):651--669, 2010.

\bibitem{Gromov-2003}
M.~Gromov.
\newblock {On the entropy of holomorphic maps}.
\newblock {\em Enseign. Math. (2)}, 49(3-4):217--235, 2003.

\bibitem{Haissinsky-Pilgrim_CoaConfDYn}
P.~Ha{\"i}ssinsky and K.~Pilgrim.
\newblock Coarse expanding conformal dynamics.
\newblock {\em Asterisque}, 325, 2009.

\bibitem{Hajlasz-areathm}
P.~Haj{\l}asz.
\newblock {Sobolev mappings, co-area formula and related topics}.
\newblock In {\em {Proceedings on {A}nalysis and {G}eometry ({R}ussian)
  ({N}ovosibirsk {A}kademgorodok, 1999)}}, pages 227--254. Izdat. Ross. Akad.
  Nauk Sib. Otd. Inst. Mat., Novosibirsk, 2000.

\bibitem{Heinonen-book}
J.~Heinonen.
\newblock {\em {Lectures on analysis on metric spaces}}.
\newblock {Universitext}. Springer-Verlag, New York, 2001.

\bibitem{HeinonenKilpelainenMartio2006book}
J.~Heinonen, T.~Kilpel{\"a}inen, and O.~Martio.
\newblock {\em {Nonlinear potential theory of degenerate elliptic equations}}.
\newblock Dover Publications, Inc., Mineola, NY, 2006.
\newblock Unabridged republication of the 1993 original.

\bibitem{Iwaniec-Martin_AASF}
T.~Iwaniec and G.~Martin.
\newblock {Quasiregular semigroups}.
\newblock {\em Ann. Acad. Sci. Fenn. Math.}, 21(2):241--254, 1996.

\bibitem{Iwaniec-Martin-book}
T.~Iwaniec and G.~Martin.
\newblock {\em {Geometric function theory and non-linear analysis}}.
\newblock {Oxford Mathematical Monographs}. The Clarendon Press, Oxford
  University Press, New York, 2001.

\bibitem{Kangaslampi-thesis}
R.~Kangaslampi.
\newblock {Uniformly quasiregular mappings on elliptic {R}iemannian manifolds}.
\newblock {\em Ann. Acad. Sci. Fenn. Math. Diss.}, 151, 2008.
\newblock 72pp.

\bibitem{Kangasniemi}
I.~{Kangasniemi}.
\newblock {Sharp cohomological bound for uniformly quasiregularly elliptic
  manifolds}.
\newblock {\em ArXiv e-prints}, Nov. 2017.

\bibitem{KangasniemiPankka}
I.~Kangasniemi and P.~Pankka.
\newblock {Uniform cohomological expansion of uniformly quasiregular mappings}.
\newblock {\em Proc. London Math. Soc.}, 118(3):701--728, 2019.

\bibitem{Katok-1977}
A.~B. Katok.
\newblock {The entropy conjecture}.
\newblock In {\em {Smooth dynamical systems ({R}ussian)}}, pages 181--203.
  Izdat. ``Mir'', Moscow, 1977.

\bibitem{Lyubich-1983}
M.~J. Ljubich.
\newblock {Entropy properties of rational endomorphisms of the {R}iemann
  sphere}.
\newblock {\em Ergodic Theory Dynam. Systems}, 3(3):351--385, 1983.

\bibitem{Mane-Sad-Sullivan}
R.~Ma{\~n}{\'e}, P.~Sad, and D.~Sullivan.
\newblock {On the dynamics of rational maps}.
\newblock {\em Ann. Sci. {\'E}cole Norm. Sup. (4)}, 16(2):193--217, 1983.

\bibitem{Martin-Mayer-Peltonen}
G.~Martin, V.~Mayer, and K.~Peltonen.
\newblock {The generalized {L}ichnerowicz problem: uniformly quasiregular
  mappings and space forms}.
\newblock {\em Proc. Amer. Math. Soc.}, 134(7):2091--2097 (electronic), 2006.

\bibitem{Martin-Peltonen-PAMS}
G.~Martin and K.~Peltonen.
\newblock {Sto\"\i low factorization for quasiregular mappings in all
  dimensions}.
\newblock {\em Proc. Amer. Math. Soc.}, 138(1):147--151, 2010.

\bibitem{Martin2014}
G.~J. Martin.
\newblock {The theory of quasiconformal mappings in higher dimensions, {I}}.
\newblock In {\em {Handbook of {T}eichm{\"u}ller theory. {V}ol. {IV}}},
  volume~19 of {\em {IRMA Lect. Math. Theor. Phys.}}, pages 619--677. Eur.
  Math. Soc., Z{\"u}rich, 2014.

\bibitem{Martin-Mayer-2003}
G.~J. Martin and V.~Mayer.
\newblock {Rigidity in holomorphic and quasiregular dynamics}.
\newblock {\em Trans. Amer. Math. Soc.}, 355(11):4349--4363, 2003.

\bibitem{Mayer1997paper}
V.~Mayer.
\newblock {Uniformly quasiregular mappings of {L}att{\`e}s type}.
\newblock {\em Conform. Geom. Dyn.}, 1:104--111, 1997.

\bibitem{Misiurewicz-Przytycki-1977}
M.~Misiurewicz and F.~Przytycki.
\newblock {Topological entropy and degree of smooth mappings}.
\newblock {\em Bull. Acad. Polon. Sci. S{\'e}r. Sci. Math. Astronom. Phys.},
  25(6):573--574, 1977.

\bibitem{OkuyamaPankka}
Y.~Okuyama and P.~Pankka.
\newblock {Equilibrium measures for uniformly quasiregular dynamics}.
\newblock {\em J. Lond. Math. Soc. (2)}, 89(2):524--538, 2014.

\bibitem{Peltonen-CGD}
K.~Peltonen.
\newblock {Examples of uniformly quasiregular mappings}.
\newblock {\em Conform. Geom. Dyn.}, 3:158--163, 1999.

\bibitem{Prywes}
E.~Prywes.
\newblock {A bound on the cohomology of quasiregularly elliptic manifolds}.
\newblock {\em Ann. of Math. (2)}, 189(3):863--883, 2019.

\bibitem{Przytycki-Urbanski_book}
F.~Przytycki and M.~Urba{\'n}ski.
\newblock {\em Conformal fractals: ergodic theory methods}.
\newblock Cambridge university press, 2010.

\bibitem{Rickman}
S.~Rickman.
\newblock {\em {Quasiregular mappings}}, volume~26 of {\em {Ergebnisse der
  Mathematik und ihrer Grenzgebiete (3) [Results in Mathematics and Related
  Areas (3)]}}.
\newblock Springer-Verlag, Berlin, 1993.

\bibitem{Rokhlin_measuretheory}
V.~A. Rohlin.
\newblock {On the fundamental ideas of measure theory}.
\newblock {\em Mat. Sbornik N.S.}, 25(67):107--150, 1949.

\bibitem{Rokhlin_entropy}
V.~A. Rohlin.
\newblock {Lectures on the entropy theory of transformations with invariant
  measure}.
\newblock {\em Uspehi Mat. Nauk}, 22(5 (137)):3--56, 1967.

\bibitem{Shub-BAMS-1974}
M.~Shub.
\newblock {Dynamical systems, filtrations and entropy}.
\newblock {\em Bull. Amer. Math. Soc.}, 80:27--41, 1974.

\bibitem{Srivastava_Borel}
S.~M. Srivastava.
\newblock {\em A course in Borel sets}.
\newblock Springer, 1998.

\bibitem{Vaisala1966paper}
J.~V{\"a}is{\"a}l{\"a}.
\newblock {Discrete open mappings on manifolds}.
\newblock {\em Ann. Acad. Sci. Fenn. A I}, 392:1--10, 1966.

\bibitem{Varopoulos-book}
N.~T. Varopoulos, L.~Saloff-Coste, and T.~Coulhon.
\newblock {\em {Analysis and geometry on groups}}, volume 100 of {\em
  {Cambridge Tracts in Mathematics}}.
\newblock Cambridge University Press, Cambridge, 1992.

\bibitem{Yomdin-1987}
Y.~Yomdin.
\newblock {Volume growth and entropy}.
\newblock {\em Israel J. Math.}, 57(3):285--300, 1987.

\end{thebibliography}

\end{document}